\documentclass[11pt, oneside]{article}
\usepackage{amsmath, amsthm, amssymb}
\usepackage{stmaryrd}
\usepackage{geometry}
\geometry{letterpaper}                   		% ... or a4paper or a5paper or ... 

\usepackage{natbib}
\usepackage{hyperref}
\usepackage[sort, nameinlink]{cleveref}
\usepackage{interval}
\hypersetup{colorlinks=true, citecolor=blue}

\usepackage{tikz}

\newcommand{\demph}[1]{\emph{#1}}
\newcommand{\G}{\Gamma}
\newcommand{\D}{\Delta}
\newcommand{\abst}[2]{\ensuremath{\{#1 \;|\; #2\}}}
\newcommand{\setset}{\textsc{Set-Set}}
\newcommand{\setfmla}{\textsc{Set-Fmla}}
\newcommand{\fmlaset}{\textsc{Fmla-Set}}
\newcommand{\fmlafmla}{\textsc{Fmla-Fmla}}

\newcommand{\lang}{\ensuremath{\mathcal{L}}}
\newcommand{\tuple}[1]{\ensuremath{\langle #1 \rangle}}
\newcommand{\dn}[1]{\ensuremath{\llbracket #1 \rrbracket}}
\newcommand{\sq}[2]{#1 \,\Yright\, #2}

\newcommand{\hook}{\ensuremath{\mathrel{\supset}}}

\newcommand{\mirror}[1]{\overline{#1}}
\newcommand{\malpha}{\mirror{\alpha}}
\newcommand{\frech}{Fr\'{e}chet-Hoeffding}
\newcommand{\modl}{\mathfrak{M}}
\newcommand{\alg}{\mathcal{A}}
\newcommand{\prob}{\ensuremath{\textsc{p}}}
\newcommand{\hopen}[2]{\ensuremath{\interval[soft open fences, open left]{#1}{#2}}}

\newcommand{\clos}[2]{\ensuremath{\interval{#1}{#2}}}

\newtheorem{lemma}{Lemma}
\newtheorem{fact}[lemma]{Fact}
\newtheorem{theorem}[lemma]{Theorem}
\newtheorem{corollary}[lemma]{Corollary}
\newtheorem{conjecture}[lemma]{Conjecture}
\theoremstyle{definition}
\newtheorem{defn}[lemma]{Definition}

\Crefname{fact}{Fact}{Facts}
\Crefname{defn}{Definition}{Definitions}
\Crefname{conjecture}{Conjecture}{Conjecture}

\newcommand{\dual}[1]{\ensuremath{#1^{\star}}}
\newcommand{\dalpha}{\dual{\alpha}}
\newcommand{\ddalpha}{\alpha^{\star\star}}
\newcommand{\ms}[2]{\ensuremath{\text{ms}^{#1}_{#2}}}

\newcommand{\tg}[3]{\ensuremath{#1^{#2}_{#3}}}

\newcommand{\pe}[1]{\textcolor{black}{#1}}

\title{Probabilistic consequence relations}
\author{Paul \'{E}gr\'{e}\thanks{IRL Crossing, CNRS. paul.egre@ens.fr}\ \and Ellie Ripley\thanks{Monash University. ripley@negation.rocks}}
\date{}%IRL Crossing, CNRS / Monash University}
\begin{document}
\maketitle

\sloppy

\begin{abstract}

\noindent \pe{This paper investigates logical consequence defined in terms of probability distributions, for a classical propositional language using a standard notion of probability. We examine three distinct probabilistic consequence notions, which we call material consequence, preservation consequence, and symmetric consequence. %While the first is fully classical, the second is structurally classical but operationally subclassical, and the third is structurally and operationally subclassical.
While material consequence is fully classical for any threshold, preservation consequence and symmetric consequence are subclassical, with only symmetric consequence gradually approaching classical logic at the limit threshold equal to 1. Our results extend earlier results obtained by J. Paris in a $\setfmla$ setting to the \setset\ setting, and consider open thresholds beside closed ones. In the \setset\ setting, in particular, they reveal that probability 1 preservation does not yield classical logic, but supervaluationism, and conversely positive probability preservation yields subvaluationism.}%In the \setfmla case, a classically valid argument can be defined indifferently in terms of truth preservation and in terms of probability 1 preservation. 

\end{abstract}

\section{Introduction}
\label{sec:introduction}

Logical validity for an argument is defined differently depending on whether one thinks of deductive or inductive arguments. In the deductive case, the standard definition of logical consequence is in terms of truth preservation (see \citealt{tarski1936concept,ladd1902validity}). In the inductive case, it is in terms of probability preservation (\citealt{skyrms1966choice,ladd1902validity}). Truth and probability are distinct notions, and one should not expect the preservation of the one to coincide with the preservation of the other, except in limit cases.

When arguments have a set of premises and a single conclusion (what we, following \citealt{humberstone:c}, call the \setfmla\ framework), classical logic is such a limit case.
In this framework, classical validity can be semantically characterized either in terms of preserving truth or in terms of preserving certainty (probability 1) (\citealt{hailperin1984probability,adams1998primer,paris2004deriving}). When the premises are not certain, however, then we get well-known departures from classical logic. The most famous example is given by lottery cases (\citealt{kyburg1997rule}): the probability of $A$ (``ticket 1 will lose'') and the probability of $B$ (``ticket 2 will lose'') can both exceed a threshold less than 1 (say $\frac{2}{3}$ in a 3-ticket lottery), but the probability of $A\wedge B$ can fall below that threshold (to as low as $\frac{1}{3}$).

Cases in which premises are believed with less than certainty are extremely common in everyday reasoning, and they raise a natural question: what logics govern the preservation of high (but possibly not 1) probability?
This question has received attention from various scholars, notably from Ernest Adams, Kevin Knight, and Jeff Paris (see \citealt{adams1996four,knight2003probabilistic,paris2004deriving}). Paris, in particular, produces a sound and complete axiomatization for logics defined in terms of probability preservation at or above a rational number between 0 and 1.

In this paper we are interested in a generalization of this question along two main directions. The first direction concerns the format of arguments. For Paris, and similarly for Adams and Knight, arguments are given in a multiple-premises single-conclusion setting (\setfmla). We are interested in the multiple-premises multiple-conclusions case (\setset) as well. One motivation for that is that this generalization casts a novel light on the characterization of some non-classical logics that coincide with classical logic in the \setfmla\ case, but that differ from it in the \setset\ case, namely supervaluationist logic on the one hand (\citealt{fine1975vagueness}), and subvaluationist logic on the other (\citealt{jaskowski1969propositional,varzi1994universal, hyde1997heaps}).

%The second direction concerns the definition of validity. Paris and Knight define validity in terms of surpassing or being equal to a threshold, namely in terms of belonging to a closed interval between $\alpha$ and 1. We are also interested in the case in which this interval might be open.

The second direction has to do with a variation on the notion of probability preservation, using a more general template entertained by Knight and Paris, allowing probabilistic thresholds to vary depending on premises and conclusions (see \citealt{paris2004deriving,knight2003probabilistic}). Consider a lottery case again: although the probability of a conjunction is typically less than the probability of either conjunct, there are well-known lower and upper bounds on the probability that a conjunction can take, namely the Fr\'echet-Hoeffding bounds (\citealt{frechet1935generalisation}), whereby $\max(0, p(A)+p(B)-1)\leq p(A\wedge B) \leq \min(p(A), p(B))$. Thus, when the probability of $A$ and that of $B$ are both at least $\frac{2}{3}$, the probability of the conjunction cannot be less than $\frac{1}{3}$. This observation is also central in the study of probabilistic coherence (see \citealt{knight2002measuring,biazzo2005probabilistic}), since it puts constraints on the attitude of rational belief that an agent ought to have in uncertain cases.

We use it to define a notion of validity which we call ``symmetric validity''. The notion basically requires that when premises are believed above some threshold $\alpha$, then not all conclusions can be disbelieved with probability less than $1-\alpha$. Unlike preservation logics, symmetric logics give us more texture regarding the relation between thresholds and logical operations. Unlike preservation logics, moreover, the resulting logics display interesting substructural, non-Tarskian features. Such logics have a closely related counterpart in the area of fuzzy logic (see \citealt{cobreros2024tolerance}), but they display fundamentally different properties.

Two main caveats need to be made before we proceed. The first is that, unlike in particular \cite{adams1996four}, in this paper we will not deal with the incorporation of a special conditional connective in the language to express the notion of conditional probability. The language will be perfectly classical; the only conditional we consider is the classical material conditional. %, and where conditional probability should be needed, it will be as an ancillary notion.
The second caveat is that the notion of probability we will be using is entirely classical too, unlike in recent work concerned with the incorporation of probability to non-classical logics (see \citealt{klein2021probabilities,egre2024certain}).

The material of this paper is organized as follows. First, in  \Cref{sec:probabilistic-models} we introduce the kinds of probabilistic models we use to define logical consequence. In \Cref{sec:material-consequence} we then start with a notion of consequence that allows us to build an exact probabilistic match with classical consequence in the \setset\ setting, and which we call material consequence. The next three sections deal with the study of preservation consequence: \Cref{sec:pres-cons} opens up with the special case of the extreme thresholds ${1}$ and $(0,1]$ to give a probabilistic characterization of super- and sub-valuationism. \Cref{sec:pres-general} then looks at the general case. \Cref{sec:pres-properties} establishes how many different preservation properties there are and discusses their structural properties. %Section \ref{sec:pres-cons} then investigates preservation consequence.
\Cref{sec:symm-cons} deals with symmetric consequence. Finally, \Cref{sec:conclusion} concludes on the way in which the three approaches relate to each other and to classical logic.

%Motivations: sorites, lottery?
%Mention Adams's (or someone else's: Hailperin?) preservation results? Paris's?
%Move to \setset, which is new.

\section{Probabilistic models}
\label{sec:probabilistic-models}

%Probability spaces, a la Moss

\subsection{Language and models}

Throughout this paper we work with a propositional language $\lang$ with a countable infinity of atomic sentences $p, q, r, \ldots$, a unary connective $\neg$ for negation and a binary connective $\lor$ for disjunction.
We also make use of connectives $\land, \bot, \top, \hook$; these are officially understood as defined from $\neg, \vee$ as usual.

\begin{defn}%[Arguments]
An argument $\sq{\G}{\D}$ is a pair of finite sets $\G, \D$ of sentences from $\lang$.
\end{defn}

To evaluate arguments, we need models, and we use what we call probabilistic models.

%This notion will serve two roles. The non-probabilistic part will allow us to state the standard definition of classical validity and germane notions of super- and sub-valuationist validity in terms of truth preservation. The probabilistic part will be needed to define the various notions of probabilistic consequence examined in this paper. The following definition is adapted from \cite{moss2018probabilistic}:

%\begin{defn}
%A classical model is a pair $M=\tuple{W,V}$ where $W$ is a nonempty set of worlds, and $V$ is a function from atomic sentences and worlds to $\{0,1\}$ such that $V(\neg A,w)=1-V(A,w)$, $V(A\wedge B,w)=\min(V(A,w),V(B,w))$ and $V(A\vee B,w)=\max(V(A,w),V(B,w))$. Given a model $M$, we write $\dn{A}=\{w\in W| V(w,A)=1\}$.
%\end{defn}

\begin{defn}%[Models] 
A probabilistic model is a quadruple $\modl = \tuple{W, \alg, \dn{\;}, \prob}$, where:
\begin{itemize}
  \item $W$ is a nonempty set of worlds;
  \item $\alg$ is an algebra on $W$, which is to say:
        \begin{itemize}
          \item $\alg \subseteq \wp(W)$,
          \item $\emptyset \in \alg$,
          \item for any $A \in \alg$, we have $W \setminus A \in \alg$, and
          \item for any $A, B \in \alg$, we have $A \cup B \in \alg$;
        \end{itemize}
  \item $\dn{\;}$ is a classical denotation function, a function $\lang \to \alg$ such that:
        \begin{itemize}
          \item $\dn{\neg \phi} = W \setminus \dn{\phi}$
          \item $\dn{\phi \lor \psi} = \dn{\phi} \cup \dn{\psi}$; and
        \end{itemize}
  \item $\prob$ is a probability function, a function $\alg \to \clos{0}{1}$ such that:
        \begin{itemize}
          \item $\prob(\emptyset) = 0$,
          \item $\prob(W \setminus A) = 1 - \prob(A)$, and
          \item if $A, B \in \alg$ and $A \cap B = \emptyset$, then $\prob(A \cup B) = \prob(A) + \prob(B)$.
        \end{itemize}
\end{itemize}
\end{defn}

Some notational shorthand involving these models will prove useful:

First, given such a model, we often treat $\prob$ also as a function $\lang \to [0, 1]$, writing just `$\prob(\phi)$' to mean $\prob(\dn{\phi})$.
In effect, we treat `$\prob$' as ambiguous between $\prob$ itself and $\prob \circ \dn{\;}$, trusting in context and in your indulgence.
Second, given such a model, we also say that `$w$ makes $\phi$ true', or that `$\phi$ is true at $w$', when $w \in \dn{\phi}$.
Third, we will sometimes write `$\modl_{\prob}$' to indicate some $\tuple{W, \alg, \dn{\;}, \prob}$, where we will have no need of further reference to $W, \alg,$ or $\dn{\;}$.

Note that in the case where $W$ is finite and where $\{w\} \in \alg$ for every $w \in W$, then to fully specify $\prob$ it suffices to give $\prob{\{w\}}$ for each $w \in W$, such that these probabilities of singletons sum to 1.
In this case we must have $\alg = \wp(W)$, and the probabilities of all other elements of $\alg$ are determined by finite additivity.\footnote{The difference between finite additivity and countable additivity doesn't matter for our purposes in this paper.
Finite additivity is enough for all our proofs to go through, so we don't require more; but in fact all the particular models we specify are finite, and so their finite additivity suffices for them to be countably additive as well.
Requiring countable additivity instead, then, wouldn't have any effect on what follows here.}

These models, with their modal structure, are handy for a number of the manipulations to follow, and they connect nicely to the structures used for example in \cite{moss2018probabilistic}.
Much of the literature on probabilistic logic, however, uses less complex structures, just probability distributions:

\begin{defn}
A probability distribution is a function $\lang \to [0, 1]$ such that:
\begin{itemize}
  \item $\prob(\bot) = 0$,
  \item $\prob(\neg \phi) = 1 - \prob(\phi)$, and
  \item if $\phi, \psi \vdash_{CL} \bot$ then $\prob(\phi \lor \psi) = \prob(\phi) + \prob(\psi)$, where $\vdash_{CL}$ is classical consequence.
\end{itemize}
\end{defn}

It is quick to see that, given any probabilistic model $\tuple{W, \alg, \dn{\;}, \prob}$, indeed $\prob \circ \dn{\;}$ (which, again, we often write just as `$\prob$') is always a probability distribution.
We can also go in the other direction, filling in any probability distribution to an entire probabilistic model.
However, it is more convenient for our purposes to show the following related fact: that we can take any probability distribution together with any \emph{finite} $\G \subseteq \lang$, and create a \emph{finite} probabilistic model that agrees with $\prob$ in its assignments of probabilities to every sentence in $\G$.

\begin{fact} \label{fact:power-set-model}
  Given any finite set $\G$ of sentences, let $At(\G)$ be the (necessarily finite) set of atomic sentences occurring in $\G$.
  Then for any probability distribution $\prob$, %\textcolor{blue}{based on a model $\modl_{\prob}$,} 
  there is a probabilistic model $\tg{\modl}{\G}{\prob} = \tuple{\tg{W}{\G}{\prob}, \tg{\alg}{\G}{\prob}, \tg{\dn{\;}}{\G}{\prob}, \tg{\prob}{\G}{\prob})}$ such that:
  \begin{itemize}
    \item $\tg{W}{\G}{\prob} = \wp(At(\G))$;
    \item $\tg{\alg}{\G}{\prob} = \wp(\tg{W}{\G}{\prob})$;
    \item for every atom $p \in At(\G)$, $\tg{\dn{p}}{\G}{\prob} = \abst{w \in \tg{W}{\G}{\prob}}{p \in w}$; and
    \item for every sentence $\gamma \in \G$, $\tg{\prob}{\G}{\prob}(\gamma) = \prob(\gamma)$.
  \end{itemize}
\end{fact}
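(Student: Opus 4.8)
The plan is to build $\tg{\modl}{\G}{\prob}$ so that its worlds $w \in \wp(At(\G))$ behave like classical truth assignments over $At(\G)$, and to pull the required probabilities back from $\prob$ along the ``state descriptions'' those worlds determine. As observed just before the statement, since $\tg{W}{\G}{\prob}$ is finite and $\tg{\alg}{\G}{\prob}$ is its full power set, it suffices to specify a nonnegative real $\tg{\prob}{\G}{\prob}(\{w\})$ for each world $w$ with these values summing to $1$: finite additivity then extends $\tg{\prob}{\G}{\prob}$ to all of $\tg{\alg}{\G}{\prob}$, and $\tg{\dn{\;}}{\G}{\prob}$ is determined on all of $\lang$ by the prescribed clause for atoms in $At(\G)$ (setting, say, $\tg{\dn{q}}{\G}{\prob} = \emptyset$ for the remaining atoms, which is immaterial) together with the recursive clauses for $\neg$ and $\lor$; one then checks routinely that the resulting quadruple is a probabilistic model, the algebra axioms holding because $\tg{\alg}{\G}{\prob}$ is a full power set, the denotation clauses by construction, and the probability-function clauses because $\tg{\prob}{\G}{\prob}$ is the additive extension of a unit-sum nonnegative assignment. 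For $w \in \wp(At(\G))$, let $\delta_w$ be the conjunction of the atoms in $w$ with the negations of the atoms in $At(\G) \setminus w$ (with $\delta_\emptyset = \top$ if $At(\G) = \emptyset$, the one degenerate case, which is trivial), and define $\tg{\prob}{\G}{\prob}(\{w\}) = \prob(\delta_w)$.

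Two checks remain. First, legitimacy of the assignment: nonnegativity is immediate since $\prob$ takes values in $\clos{0}{1}$, and the values sum to $1$ because the $\delta_w$ are pairwise classically inconsistent while $\bigvee_w \delta_w$ is a classical tautology, whence $\sum_w \prob(\delta_w) = \prob(\bigvee_w \delta_w) = 1$. This last step rests on three elementary properties of any probability distribution, each a short consequence of the axioms: monotonicity ($\phi \vdash_{CL} \psi$ implies $\prob(\phi) \le \prob(\psi)$, since then $\phi$ and $\neg\psi$ are inconsistent and $\prob(\phi) + 1 - \prob(\psi) = \prob(\phi \lor \neg\psi) \le 1$); invariance under classical equivalence (monotonicity both ways), which gives $\prob(\tau) = 1$ for every tautology $\tau$ (as $\neg\tau \vdash_{CL} \bot$, so $\prob(\neg\tau) \le \prob(\bot) = 0$); and additivity over finitely many pairwise inconsistent disjuncts (induction on the binary additivity clause, using that $\phi_{k+1}$ is inconsistent with $\phi_1 \lor \cdots \lor \phi_k$ whenever it is inconsistent with each $\phi_i$).

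Second, agreement with $\prob$ on $\G$. A routine induction on sentence structure shows that for every $\phi$ whose atoms all lie in $At(\G)$, we have $w \in \tg{\dn{\phi}}{\G}{\prob}$ iff the assignment sending each $p \in At(\G)$ to true exactly when $p \in w$ satisfies $\phi$; equivalently, $w \in \tg{\dn{\phi}}{\G}{\prob}$ iff $\delta_w \vdash_{CL} \phi$. Since every $\gamma \in \G$ is built from atoms in $At(\G)$, the set $\tg{\dn{\gamma}}{\G}{\prob}$ is thus the disjoint union of the singletons $\{w\}$ with $\delta_w \vdash_{CL} \gamma$, and $\gamma$ is classically equivalent to $\bigvee\{\delta_w : \delta_w \vdash_{CL} \gamma\}$. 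Hence, by finite additivity of $\tg{\prob}{\G}{\prob}$ and then the equivalence-invariance and finite additivity of $\prob$ noted above,
\[
  \tg{\prob}{\G}{\prob}(\gamma) \;=\; \tg{\prob}{\G}{\prob}\bigl(\tg{\dn{\gamma}}{\G}{\prob}\bigr) \;=\; \sum_{\delta_w \vdash_{CL} \gamma} \prob(\delta_w) \;=\; \prob\bigl(\textstyle\bigvee\{\delta_w : \delta_w \vdash_{CL} \gamma\}\bigr) \;=\; \prob(\gamma).
\]
The one place where care is needed is the bundle of preliminary facts about probability distributions collected in the previous paragraph; everything else is the familiar correspondence between worlds-as-subsets-of-atoms and truth assignments, together with disjunctive normal form. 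I would isolate those preliminary facts as a short lemma before presenting the construction.
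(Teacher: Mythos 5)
Your proposal is correct and follows essentially the same route as the paper's own proof: worlds as subsets of $At(\G)$, state descriptions $\delta_w$, singleton probabilities pulled back via $\tg{\prob}{\G}{\prob}(\{w\}) = \prob(\delta_w)$, and agreement on $\G$ via pairwise inconsistency of state descriptions plus disjunctive normal form. You are somewhat more explicit than the paper about the bookkeeping (that the singleton values sum to $1$, and the lemma-worthy facts about probability distributions such as invariance under classical equivalence), which is a reasonable addition but not a different argument.
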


\begin{proof}
  There is no decision to be made for $\tg{W}{\G}{\prob}$ and $\tg{\alg}{\G}{\prob}$; these are specified in the claim.
  So we just need to specify $\tg{\dn{\;}}{\G}{\prob}$ and $\tg{\prob}{\G}{\prob}$ in a way that meets the claims.
  We know $\tg{\dn{p}}{\G}{\prob}$ for all $p \in At(\G)$; for any $q \not\in At(\G)$, let $\tg{\dn{q}}{\G}{\prob} = \emptyset$.\footnote{This is just for concreteness; really it doesn't matter what $\tg{\dn{q}}{\G}{\prob}$ is when $q \not\in At(\G)$.}

  If $At(\G) = \{p_{1}, \ldots, p_{n}\}$, let a \demph{state description} be any sentence of the form $\pm p_{1} \land \ldots \land \pm p_{n}$, where $\pm p_{i}$ is either $p_{i}$ or $\neg p_{i}$.
  We have a bijection from state descriptions to worlds in $\tg{W}{\G}{\prob}$ given by $\tg{\dn{\;}}{\G}{\prob}$; let its inverse be $S$.
  (For example, if $n = 4$, then $S(\{p_{1}, p_{3}\}) = p_{1} \land \neg p_{2} \land p_{3} \land \neg p_{4}$.)

  To specify $\tg{\prob}{\G}{\prob}$ in full, it suffices to specify it on singletons.
  For each $w \in \tg{W}{\G}{\prob}$, let $\tg{\prob}{\G}{\prob}(\{w\}) = \prob(S(w))$.
  Now, since $\tg{\dn{S(w)}}{\G}{\prob} = \{w\}$, this gives us that for every $w$, we have $\tg{\prob}{\G}{\prob}(S(w)) = \prob(S(w))$.
  Moreover, every state description is $S(w)$ for some $w \in \tg{W}{\G}{\prob}$, so for every state description $s$ we have $\tg{\prob}{\G}{\prob}(s) = \prob(s)$.
  Since any two distinct state descriptions are classically inconsistent with each other, this in turn ensures that for any disjunction $\delta$ of state descriptions, $\tg{\prob}{\G}{\prob}(\delta) = \prob(\delta)$.
  But every $\gamma \in \G$ is classically equivalent to some disjunction of state descriptions, so for all such $\gamma$ we have $\tg{\prob}{\G}{\prob}(\gamma) = \prob(\gamma)$, as needed.\footnote{Taking $\bot$ to be the disjunction of the empty set of state descriptions.
    Similarly, the case where $At(\G) = \emptyset$ is perhaps not very interesting, but it is included in this reasoning, taking the conjunction of 0 conjuncts (the only state description in such a case) to be $\top$.}
\end{proof}

\subsection{Classical, super- and subvaluationist validity}

With these models in hand, we proceed to define a few familiar non-probabilistic notions of validity.
(These three notions make no use of the final coordinate in these models, but we stick to full probabilistic models for the sake of uniformity.)
Our first concept of validity is classical validity, which we define in terms of truth preservation:
\begin{defn}

We say that $\sq{\G}{\D}$ is \emph{classically} valid, written $\G \models_{CL} \D$, iff for every probabilistic model $\modl_{\prob}=\tuple{W, \alg, \dn{\;}, \prob}$ and every world $w\in W$, if $w$ makes every $\gamma\in \G$ true, then $w$ makes some $\delta\in \D$ true. Equivalently, $\sq{\G}{\D}$ is classically valid iff for every model $\modl_{\prob}$, $\dn{\bigwedge \G \hook \bigvee \D}=W$.

%$\dn{\bigwedge \G \hook \bigvee \D}=W$.

\end{defn}

We also consider two germane notions of logical consequence: supervaluationist validity is preservation of super-truth (truth at every world), and subvaluationist validity is preservation of sub-truth (truth at some world).

\begin{defn}

We say that $\sq{\G}{\D}$ is \emph{supervaluationistically} valid, written $\G \models_{SV} \D$, iff for every probabilistic model $\modl_{\prob}=\tuple{W, \alg, \dn{\;}, \prob}$, if every $\gamma$ is true at every world of $W$, then some $\delta$ is true at every world of $W$. Equivalently, $\sq{\G}{\D}$ is supervaluationistically valid iff, if $\dn{\gamma}=W$ for every $\gamma \in G$, then $\dn{\delta}=W$ for some $\delta\in G$.

\end{defn}

\begin{defn}

We say that $\sq{\G}{\D}$ is \emph{subvaluationistically} valid, written $\G \models_{sV} \D$, iff for every probabilistic model $\modl_{\prob}=\tuple{W, \alg, \dn{\;}, \prob}$, if every $\gamma\in \G$ is true at some world $w$, then some $\delta\in \D$ is true at some world; Equivalently, $\sq{\G}{\D}$ is \emph{subvaluationistically} valid iff, if $\dn{\gamma}\neq \emptyset$ for every $\gamma \in \G$, then $\dn{\delta}\neq \emptyset$ for some $\delta\in \D$.

\end{defn}

While supervaluationist validity and subvaluationist validity coincide with classical validity in the $\setfmla$ case and the $\fmlaset$ case, respectively, they differ in the $\setset$ framework (see \citealt{hyde1997heaps,ripley2013sorting}). In particular, $p\vee \neg p \not\models_{SV} p, \neg p$, and $p, \neg p\not\models_{sV} p\wedge \neg p$, that is we lose abjunction and adjunction, respectively, in those frameworks.

%Define classical validity;
%Define super/sub validity here as well; all over probabilistic models.

%Mention LP/K3, and why they won't appear here.
%Reinforce that our probabilities are completely classical; mention some nonclassical probability stuff (R.Williams, Paul \& Lorenzo \& Jan, others?, Klein et al).

\subsection{Probabilistic consequence: The general recipe}
\label{sec:general-recipe}

We defined classical validity, supervaluationist validity, and subvaluationist validity in terms of truth preservation, but we shall see that each has an equivalent characterization in terms of probability preservation, which is what drives the introduction of probabilistic models in the first place.

Our notions of consequence over these probabilistic models all follow a general recipe.
First, we take for granted some set $\alpha \subseteq \clos{0}{1}$ that is to count as the `good' probabilities: sentences with probabilities in $\alpha$ are those with a probability high enough.
We remain neutral throughout as to what such probabilities are high enough \emph{for}; we hope our results here can be useful to a range of possible interpretations and applications.

We make three assumptions about our set $\alpha$ of `good' probabilities: it contains 1, it does not contain 0, and it is an \demph{upset}, in the sense that for any $x, y \in \clos{0}{1}$ with $x \leq y$, if $x \in \alpha$ then $y \in \alpha$.
We refer to all such sets simply as \demph{upsets}, taking for granted the conditions about 1 and 0.
To fix ideas and notation, note that every upset $\alpha$ has an infimum $\inf \alpha$, which we call $\alpha$'s \demph{threshold}, and that for any $0 < x < 1$, there are exactly two upsets with threshold $x$: namely, $\hopen{x}{1}$ and $\clos{x}{1}$.\footnote{Since all upsets exclude 0, there is only an open upset with threshold 0; and since all upsets include 1, there is only a closed upset with threshold 1.}
We call upsets $\hopen{x}{1}$ \demph{open} and upsets $\clos{x}{1}$ \demph{closed}.
This gives a convenient way to specify any upset: simply by giving its threshold and saying whether it is open or closed.

Let a \demph{counterexample notion} be a three-place relation between upsets, probabilistic models, and arguments.
Given a counterexample notion and an upset $\alpha$, we always determine a consequence relation following the same recipe: count an argument as valid iff there is no probabilistic model that bears the counterexample notion relation to $\alpha$ and that argument. 

%%%

%\input{alpha-sat}

\section{Material consequence}
\label{sec:material-consequence}

The bulk of the paper considers two main counterexample notions in the probabilistic setting, which we will call \demph{preservation consequence} and \demph{symmetric consequence}.
As a warm-up and an aid to later discussion, in this brief section we first explore a distinct option, which we call \demph{material consequence}.

\begin{defn}
  Given an upset $\alpha$, a probabilistic model $\modl_{\prob}$ is an \demph{$\alpha$ material counterexample} to an argument $\sq{\G}{\D}$ iff $\prob(\bigwedge \G \hook \bigvee \D) \not\in \alpha$.
  Thus, the argument $\sq{\G}{\D}$ is $\alpha$-materially valid iff every $\modl_{\prob}$ is such that $\prob(\bigwedge \G \hook \bigvee \D) \in \alpha$.
\end{defn}

That is, to see whether a probabilistic model is a material counterexample to an argument, we first roll the entire argument up into a single sentence, and then check the probability of that sentence on the model.
If the probability is not high enough (is not in the specified upset), then we have a counterexample; if the probability is high enough (is in the upset), then we do not have a counterexample.
The arguments that are $\alpha$-materially valid, then, are those whose associated sentences always have probabilities in $\alpha$.
This, we think, is a reasonable enough notion.
We know how to assign probabilities to sentences, and there is a natural way of associating a sentence to each argument; material consequence results from putting these two ideas together.

One upshot of this idea, though, is that the dependence on an upset $\alpha$ is an illusion:
\begin{fact}\label{fact:material}
  For any upset $\alpha$, the argument $\sq{\G}{\D}$ is $\alpha$-materially valid iff it is classically valid.
\end{fact}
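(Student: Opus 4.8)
The plan is to prove both directions directly, leaning on the equivalent characterization of classical validity already recorded in the definition: $\G \models_{CL} \D$ iff $\dn{\bigwedge \G \hook \bigvee \D} = W$ in every probabilistic model. Write $\chi := \bigwedge \G \hook \bigvee \D$ for the sentence into which the argument is rolled up.

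For the direction from classical validity to $\alpha$-material validity: if $\G \models_{CL} \D$, then in every model $\modl_{\prob}$ we have $\dn{\chi} = W$, and hence $\prob(\chi) = \prob(W) = 1$. Since every upset contains $1$, this probability lies in $\alpha$, so no $\modl_{\prob}$ is an $\alpha$ material counterexample to $\sq{\G}{\D}$; that is, $\sq{\G}{\D}$ is $\alpha$-materially valid. This direction is essentially immediate and uses only that $1 \in \alpha$.

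For the converse I would argue by contraposition. Suppose $\G \not\models_{CL} \D$. Then $\chi$ is not a classical tautology, so there is a classical valuation $v$ of the relevant atoms with $v \not\models \chi$. Let $\prob_{v}$ be the ``point-mass'' probability distribution determined by $v$, i.e. $\prob_{v}(\phi) = 1$ if $v \models \phi$ and $\prob_{v}(\phi) = 0$ otherwise; one checks directly against the three clauses that $\prob_{v}$ is a probability distribution (the additivity clause using that classically inconsistent sentences are not both true at $v$). Applying \Cref{fact:power-set-model} to the finite set $\{\chi\}$ and the distribution $\prob_{v}$ yields a probabilistic model $\modl$ with $\prob(\chi) = \prob_{v}(\chi) = 0$. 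Since $0 \notin \alpha$, this $\modl$ is an $\alpha$ material counterexample to $\sq{\G}{\D}$, so $\sq{\G}{\D}$ is not $\alpha$-materially valid.

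The only point requiring any care — and it is precisely what makes the statement uniform across all upsets $\alpha$ — is the role of the two endpoint conditions: $1 \in \alpha$ drives the first direction and $0 \notin \alpha$ drives the second, so the particular threshold of $\alpha$ and whether it is open or closed are never relevant. I do not expect a genuine obstacle here; if one preferred to avoid invoking \Cref{fact:power-set-model}, the counterexample model can be built by hand (worlds the classical valuations of the atoms in $\G \cup \D$, algebra the full powerset, the obvious denotations, and $\prob$ concentrated on $v$), with the degenerate cases $\G = \emptyset$ or $\D = \emptyset$ handled by the conventions $\bigwedge \emptyset = \top$ and $\bigvee \emptyset = \bot$.
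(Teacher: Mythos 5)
Your proof is correct and follows essentially the same route as the paper's: both directions reduce the claim to whether $\bigwedge \G \hook \bigvee \D$ is a classical tautology, using $1 \in \alpha$ for one direction and, for the other, a point-mass distribution concentrated on a falsifying world together with \Cref{fact:power-set-model} to realize $\prob(\chi) = 0 \not\in \alpha$. The only cosmetic difference is that you build the point-mass distribution directly from a classical valuation, whereas the paper first takes a model with $\dn{\phi} \neq W$ and then redefines its probability function; the substance is identical.
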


\begin{proof}
  From the definition of material validity, $\sq{\G}{\D}$ is $\alpha$-materially valid iff for all probability models $\modl_{\prob}$, we have $\prob(\bigwedge \G \hook \bigvee \D) \in \alpha$.
  And it's well-known that $\sq{\G}{\D}$ is classically valid iff $\bigwedge \G \hook \bigvee \D$ is a classical tautology.
  So it's enough to show that for any upset $\alpha$ and any sentence $\phi$, there is a probability model $\modl_{\prob}$ with $\prob(\phi) \not\in \alpha$ iff $\phi$ is not a classical tautology.

  All classical tautologies $\phi$ have probability 1 on every probability model since $\dn{\phi}=W$, so the left-to-right direction is immediate by contraposition.
  For the right-to-left direction, note that for any $\phi$ that is not a classical tautology, \pe{there is a probability model $\modl_{\prob}$ such that $\dn{\phi}\neq W$.
  Let $\prob'$ be a probability function based on the same model such that $\prob'(\dn{\phi})=0$ and $\prob'(W \backslash \dn{\phi})=1$; such a probability function exists, as it suffices that it concentrates all the mass on a single world in $W \backslash \dn{\phi}$. By \Cref{fact:power-set-model}, there is a model $\modl_{\prob'}^{\phi}$ in which $\tg{\prob'}{\phi}{\prob'}(\phi) = \prob'(\phi)=0$.
  }
  %where $\prob(A) = 0$. \textcolor{blue}{}
\end{proof}

Material validity, then, always perfectly matches classical validity, regardless of which upset we choose.
This fact is interesting in its own right, and it also helps circumscribe the applications where material validity might be of some interest: those where reasoning under uncertainty should nonetheless hold exactly to classical standards.

In the rest of the paper, however, we explore other options besides this.
We are particularly interested in approaches to reasoning under uncertainty that capture a different kind of insight: the idea that classicality should emerge at the limit, when things are certain (that is, at the upset $\{1\}$); but that some nonclassical features can be appropriate when more uncertainty is in the air (that is, for looser upsets).

\section{Preservation consequence: the $\{1\}$ and $(0,1]$ cases}
\label{sec:pres-cons}

In this section we introduce the probabilistic counterexample notion that is arguably the most natural, namely \demph{preservation}. We start with some examples and then characterize the two extremes of preservation consequence, which turn out to match super- and sub-valuationist validity.

\subsection{Definition}
\label{sec:pc-definition}

%Perhaps the most natural counterexample notion is the one we call \demph{preservation}:

\begin{defn} \label{defn:pres-con}
A probabilistic model $\modl_{\prob}$ is an $\alpha$-preservation counterexample to an argument $\sq{\G}{\D}$ iff $\prob[\G] \subseteq \alpha$ and $\prob[\D] \subseteq \clos{0}{1} \setminus \alpha$.
Thus, the argument $\sq{\G}{\D}$ is $\alpha$-preservation valid iff every $\modl_{\prob}$ is such that if $\prob[\G] \subseteq \alpha$, then there is some $\delta \in \D$ with $\prob(\delta) \in \alpha$.
\end{defn}

This is just like usual designated-values approaches to defining consequence, here using probabilities in $\alpha$ as our designated values.
An $\alpha$-preservation counterexample to an argument is one that takes all the premises of the argument, and none of its conclusions, to designated values.
An argument is $\alpha$-preservation valid, then, where there is no way to do this, when any probabilistic model that gives all premises probabilities in $\alpha$ must also give some conclusion a probability in $\alpha$.

For example, consider the upset $\hopen{.7}{1}$ and the probabilities associated with a single roll of a fair 6-sided die.
Let $p$ be the proposition that the die comes up $> 1$, let $q$ be the proposition that the die comes up $< 6$, and let $\prob$ come from a probabilistic model that assigns appropriate probabilities to this situation.
Then $\prob(p) = \prob(q) = \frac{5}{6} > .7$, while $\prob(p \land q) = \frac{4}{6} < .7$.
So the argument $\sq{p, q}{p \land q}$ is not $\hopen{.7}{1}$-preservation valid; there is a $\hopen{.7}{1}$-preservation counterexample.
(As we will see, the argument $\sq{p, q}{p \land q}$ is $\alpha$-preservation valid only for one choice of $\alpha$, namely, $\{1\}$.)

On the other hand, consider the upset $\clos{.4}{1}$ and the argument $\sq{p \land \neg q, q \land \neg p, \neg(p \lor q)}{\bot}$.
Note that any $\clos{.4}{1}$-preservation counterexample to this argument would have to assign a probability $\geq .4$ to each of $p \land \neg q$, $q \land \neg p$, and $\neg (p \lor q)$.
Since these three sentences are pairwise incompatible, this isn't possible, since the sum of those values would have to exceed 1.
So there can be no $\clos{.4}{1}$-preservation counterexample to this argument, and the argument is $\clos{.4}{1}$-preservation valid.
On the other hand, there are probabilistic models that assign a probability of $\frac{1}{3}$ to each of these sentences, so the same argument is not $\clos{.3}{1}$-preservation valid.%\footnote{Question, probably not to be included in the final paper: is there any argument $\sq{\G}{\D}$ with both $\G$ and $\D$ nonempty and upset $\alpha \neq \{1\}$ such that: 1) $\sq{\G}{\D}$ is $\alpha$-preservation valid; 2) there is no $\gamma \in \G$ such that $\sq{\gamma}{\D}$ is $\alpha$-preservation valid; 3) there is no $\delta \in \D$ such that $\sq{\G}{\delta}$ is $\alpha$-preservation valid?
%If so, is there any such with $\clos{.5}{1} \subseteq \alpha$?}

\subsection{Super and subvaluationism}
\label{sec:super-subv}

Given our setup, there are two extreme upsets: the smallest upset $\{1\}$ and the largest upset $\hopen{0}{1}$.\footnote{Recall that we require that every upset includes $1$ and excludes $0$.
We leave consideration of the situation involving $\{\}$ and $\clos{0}{1}$ as exercises for the interested reader.}
In this section, we show that the preservation consequence relations associated with these extreme upsets are familiar from non-probabilistic work.

\begin{fact} \label{fact:1super}
  An argument $\sq{\G}{\D}$ is $\{1\}$-preservation valid iff it is supervaluationistically valid.
\end{fact}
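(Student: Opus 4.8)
The plan is to prove both directions by unwinding the definitions and exploiting the fact that probability-$1$ events and superset relations interact cleanly. For the forward direction (supervaluationist validity implies $\{1\}$-preservation validity), I would argue contrapositively: suppose $\sq{\G}{\D}$ has a $\{1\}$-preservation counterexample $\modl_{\prob}$. Then $\prob(\gamma) = 1$ for every $\gamma \in \G$, and $\prob(\delta) \neq 1$ for every $\delta \in \D$. The key observation is that $\prob(\phi) = 1$ does \emph{not} in general force $\dn{\phi} = W$, so $\modl_{\prob}$ itself need not be a supervaluationist counterexample directly; instead I would pass to a derived model. Specifically, let $W' = \bigcap_{\gamma \in \G} \dn{\gamma}$ — or more carefully, restrict attention to the set of worlds that carry probability mass, i.e. quotient or restrict $W$ to a submodel $\modl'$ whose worlds all lie in every $\dn{\gamma}$. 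Since each $\dn{\gamma}$ has probability $1$, their finite intersection has probability $1$, so this restricted set is nonempty and is a legitimate world-set for a new model $\modl'$ (with $\dn{\;}'$ the restriction of $\dn{\;}$). In $\modl'$, every $\gamma$ is true at every world. Meanwhile, for each $\delta \in \D$, $\prob(\delta) < 1$ means $\prob(W \setminus \dn{\delta}) > 0$, so $W \setminus \dn{\delta}$ meets the support, hence $\delta$ is false at some world of $\modl'$. That makes $\modl'$ a supervaluationist counterexample, as desired.

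For the converse (supervaluationist counterexample implies $\{1\}$-preservation counterexample), suppose $\modl_{\prob}$ witnesses that $\sq{\G}{\D}$ is not supervaluationistically valid: every $\gamma \in \G$ has $\dn{\gamma} = W$, but for each $\delta \in \D$ there is a world $w_\delta \notin \dn{\delta}$. Here I cannot use $\prob$ directly either, since a given $\prob$ might already assign some $\delta$ probability $1$. But I have freedom to choose the probability function. Using \Cref{fact:power-set-model} (applied to the finite set $\G \cup \D$, or just working with a finite model directly), I would build a finite probabilistic model on the state descriptions over $At(\G \cup \D)$ and choose a probability distribution that spreads positive mass over enough worlds to falsify every $\delta$ while keeping every $\gamma$ at probability $1$. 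Concretely: since each $\gamma$ is a classical tautology in the original model, $\dn{\gamma}$ is the whole space, so \emph{any} distribution gives $\prob(\gamma) = 1$. For the conclusions, $\D$ is finite, and each $\delta$ is falsified at some state description; assign positive probability to one such state description per $\delta$ (finitely many), splitting the remaining mass arbitrarily. Then $\prob(\delta) < 1$ for every $\delta \in \D$ while $\prob(\gamma) = 1$ for every $\gamma \in \G$, giving a $\{1\}$-preservation counterexample.

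The main obstacle, and the step I would be most careful about, is the forward direction's passage to a submodel: one must check that restricting $W$ to the support (or to $\bigcap \dn{\gamma}$) genuinely yields a probabilistic model in the sense of the paper's definition — that the restricted denotation function still lands in the restricted algebra and still commutes with $\neg$ and $\vee$, and that nonemptiness is guaranteed. Nonemptiness follows because a finite intersection of probability-$1$ events has probability $1 > 0 = \prob(\emptyset)$. The algebra/denotation bookkeeping is routine but needs stating. An alternative that sidesteps model-surgery entirely is to prove the forward direction also via \Cref{fact:power-set-model}: from a $\{1\}$-preservation counterexample, extract the probability distribution $\prob$, note $\prob(\gamma)=1$ and $\prob(\delta)<1$, then rebuild a finite model $\tg{\modl}{\G \cup \D}{\prob}$ and observe that in that concrete power-set model $\prob(\gamma)=1$ forces $\dn{\gamma}$ to be all of $W$ (since every world has positive probability there, as each state description gets mass equal to $\prob$ of that state description — though one must check the chosen $\prob$ makes all singletons positive, which may require a further small perturbation). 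Whichever route, the conceptual content is the same: probability $1$ under a fully-supported distribution coincides with super-truth, and probability $1$ in general can always be \emph{realized} by such a distribution, so the two notions of counterexample collapse onto each other.
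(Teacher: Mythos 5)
Your proposal is correct and follows essentially the same route as the paper's own proof: for one direction you discard the probability-zero worlds outside $\bigcap_{\gamma\in\G}\dn{\gamma}$ (the paper's $W'=\dn{\bigwedge\G}$) and check the restricted structure is still a model, and for the other you keep the worlds fixed and redistribute the probability mass onto finitely many worlds, one falsifying each $\delta\in\D$, exactly as the paper does with its weights $\frac{1}{n}$. The only substantive divergence is your flagged alternative via \Cref{fact:power-set-model}, which the paper does not use and which, as you note, would need extra care since a probability-$1$ sentence need not denote the whole power-set world space; your primary argument does not rely on it.
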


\begin{proof}
Left to right: suppose that there is a supervaluational counterexample to $\sq{\G}{\D}$.
This is a probabilistic model $\tuple{W, \alg, \dn{\;}, \prob}$ such that for every $\gamma \in \G$ we have $\dn{\gamma} = W$ and for each $\delta \in \D$ we have $\dn{\delta} \neq W$.
We're going to use this to generate a new probabilistic model $\tuple{W, \alg, \dn{\;}, \prob'}$.

Since $\D$ is finite, let $n$ be the number of its members; and for every $\delta_{i} \in \D$, choose some world $w_{i} \not\in \dn{\delta_{i}}$.
Now let $\prob'$ be the probability distribution that assigns probability $\frac{1}{n}$ to each such $w_{i}$ and probability 0 to all other worlds.

To see that $\tuple{W, \alg, \dn{\;}, \prob}$ is a $\{1\}$-preservation counterexample to $\sq{\G}{\D}$, note first that $\dn{\gamma} = W$ for each $\gamma \in \G$, so $\prob(\gamma) = 1$ for each of these as well.
And also note that since $w_{i} \not\in \dn{\delta_{i}}$ for each $\delta_{i} \in \D$, each such $\delta_{i}$ can have probability at most $\frac{n-1}{n}$, which is less than 1.
So we have our counterexample.\smallskip

Right to left: suppose that there is a $\{1\}$-preservation counterexample to $\sq{\G}{\D}$.
This is a probabilistic model $\tuple{W, \alg, \dn{\;}, \prob}$ such that $\prob(\gamma) = 1$ for every $\gamma \in \G$ and $\prob(\delta) \neq 1$ for every $\delta \in \D$.
We're going to use this to generate a new probabilistic model $\tuple{W', \alg', \dn{\;}', \prob'}$.

For $W', \alg', \dn{\;}'$, the idea is essentially just to throw out any worlds outside $\dn{\bigwedge{\G}}$.
That is, let $W' = \dn{\bigwedge \G}$, let $\alg' = \abst{A \cap \dn{\bigwedge \G}}{A \in \alg}$, and let $\dn{A}' = \dn{A} \cap \dn{\bigwedge\G}$.
This idea extends to $\prob'$ too.
Any $A' \in \alg'$ is $A \cap \dn{\bigwedge \G}$ for some $A \in \alg$, and so $A' \in \alg$ as well; we simply let $\prob'(A') = \prob(A')$. We know $\prob'$ is a probability distribution, since $\prob'(W')=1$ by hypothesis, and $\prob'(\emptyset)=0$, and finite additivity results from the fact that the $A'$ are elements of $\alg$.
This then gives us that $\prob'(\phi) = \prob(\phi)$ for any sentence $\phi$.

To see that $\tuple{W', \alg', \dn{\;}', \prob'}$ is a supervaluational counterexample to $\sq{\G}{\D}$, note first that $\dn{\gamma} = W'$ for each $\gamma \in \G$.
Then, note that, since $\prob(\delta) \neq 1$ for every $\delta \in \D$, we also have $\prob'(\delta) \neq 1$ for all such $\delta$.
Since $\prob'(W') = 1$, for each such $\delta$ there must be some $w \in W'$ with $w \not\in \dn{\delta}'$.
So we have our counterexample.
\end{proof}

The situation with the other extreme upset is, as you might expect, the mirror image:

\begin{fact} \label{fact:01sub}
  An argument $\sq{\G}{\D}$ is $\hopen{0}{1}$-preservation valid iff it is subvaluationistically valid.
\end{fact}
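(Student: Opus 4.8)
The plan is to mirror the proof of \Cref{fact:1super}, with the roles of premises and conclusions swapped, and with ``truth at every world'' replaced by ``truth at some world'' (equivalently, nonempty denotation) and ``probability $1$'' replaced by ``positive probability''. Recall that $\hopen{0}{1}$-preservation validity says: every model that assigns positive probability to every $\gamma \in \G$ assigns positive probability to some $\delta \in \D$; and subvaluationist validity says: if $\dn{\gamma} \neq \emptyset$ for every $\gamma \in \G$, then $\dn{\delta} \neq \emptyset$ for some $\delta \in \D$. So we prove the contrapositive in each direction, showing that a subvaluationist counterexample can be massaged into a $\hopen{0}{1}$-preservation counterexample and vice versa.

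For the direction from a subvaluationist counterexample to a $\hopen{0}{1}$-preservation counterexample, suppose $\tuple{W, \alg, \dn{\;}, \prob}$ has $\dn{\gamma} \neq \emptyset$ for every $\gamma \in \G$ and $\dn{\delta} = \emptyset$ for every $\delta \in \D$. Let $n = |\G|$, and for each $\gamma_{i} \in \G$ choose a world $w_{i} \in \dn{\gamma_{i}}$. Define $\prob'$ to put mass $\frac{1}{n}$ on each $w_{i}$ (with the usual caveat about repetitions, which only helps) and $0$ elsewhere. Then each $\gamma_{i}$ gets probability at least $\frac{1}{n} > 0$, so $\prob'[\G] \subseteq \hopen{0}{1}$; and each $\delta \in \D$ has $\dn{\delta} = \emptyset$, hence $\prob'(\delta) = 0 \notin \hopen{0}{1}$. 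So $\tuple{W, \alg, \dn{\;}, \prob'}$ is the desired counterexample.

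For the other direction, suppose $\tuple{W, \alg, \dn{\;}, \prob}$ is a $\hopen{0}{1}$-preservation counterexample: $\prob(\gamma) > 0$ for every $\gamma \in \G$ and $\prob(\delta) = 0$ for every $\delta \in \D$. The dual of ``throwing out worlds outside $\dn{\bigwedge\G}$'' is to keep only the worlds in $\dn{\bigvee\G}$ — but actually what we need is simpler: we just observe that each $\gamma \in \G$ has $\dn{\gamma} \neq \emptyset$ already (a set of probability $> 0$ cannot be empty, since $\prob(\emptyset) = 0$), and we need some $\delta \in \D$ with $\dn{\delta} = \emptyset$. Here there is a genuine gap to close: $\prob(\delta) = 0$ does not by itself force $\dn{\delta} = \emptyset$. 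This is the main obstacle, and it is resolved exactly as in \Cref{fact:01sub}'s counterpart would suggest — and, more cleanly, via \Cref{fact:power-set-model}: let $\prob$ be the probability distribution $\prob \circ \dn{\;}$, take $\G \cup \D$ as our finite set of sentences, and pass to the finite power-set model $\tg{\modl}{\G \cup \D}{\prob}$, which agrees with $\prob$ on every sentence in $\G \cup \D$. In that model, $\{w\} \in \alg$ for every world $w$ and each singleton has some probability, but more to the point we can further collapse: restrict to the submodel whose worlds are exactly those $w$ with $\tg{\prob}{\G\cup\D}{\prob}(\{w\}) > 0$, renormalizing as needed (this is legitimate since at least one such world exists, as $W \neq \emptyset$ forces total mass $1$). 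In this restricted model a sentence has probability $0$ iff its denotation is empty, so each $\delta \in \D$ now has empty denotation while each $\gamma \in \G$ retains positive probability and hence nonempty denotation. That is a subvaluationist counterexample, completing the proof.

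\medskip

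\noindent I expect the bookkeeping around the ``probability $0$ need not mean empty'' mismatch to be the only non-routine point; everything else is a direct mirroring of the $\{1\}$ case, trading $\cap \dn{\bigwedge\G}$ for a restriction to positive-mass worlds and trading universal denotation for existential denotation.
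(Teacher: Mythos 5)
Your proposal is correct, and its overall shape --- contraposing in both directions and dualizing the proof of \Cref{fact:1super} --- is exactly what the paper intends by ``mutatis mutandis.'' Your first direction (spreading mass $\frac{1}{n}$ over witnesses $w_{i} \in \dn{\gamma_{i}}$) is the precise mirror of the paper's construction. The second direction is where you depart: the mutatis-mutandis version of the paper's move would be to discard all worlds in $\dn{\bigvee \D}$, i.e.\ restrict to $W' = \dn{\neg \bigvee \D}$, which has probability $1$ because each $\delta$ has probability $0$ and $\D$ is finite; in that restricted model every $\delta$ has empty denotation by construction, and every $\gamma$ keeps its positive probability and hence nonempty denotation. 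You instead pass through \Cref{fact:power-set-model} to a finite power-set model and then restrict to the positive-mass worlds, so that ``probability $0$'' and ``empty denotation'' coincide. Both resolutions of the ``probability $0$ need not mean empty denotation'' gap work; the paper's is more direct (no detour through \Cref{fact:power-set-model}, and no need for singletons to be in the algebra), while yours buys a cleaner invariant in the resulting model. Two small points: no renormalization is actually needed in your restriction, since the discarded worlds carry total mass $0$; and the degenerate case $\G = \emptyset$ in your first direction should be handled separately (any $\prob'$ then works, since each $\delta$ already has probability $0$) --- though the paper's own proof of \Cref{fact:1super} is equally silent on the corresponding case $\D = \emptyset$.
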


\begin{proof}
  As \Cref{fact:1super}, mutatis mutandis.
\end{proof}

Note that any \setset\ consequence relation determines a particular \setfmla\ consequence relation, but that many different \setset\ consequence relations can determine the same \setfmla\ consequence relation.
For example, the classical and supervaluational \setset\ consequence relations are distinct, but they share their \setfmla\ fragment.
Because of this, working in a \setset\ framework allows us to see differences that are invisible through a \setfmla\ lens.

Along these lines, \Cref{fact:1super} reveals more complexity behind the claim, made for example in \citet[p.\ 26]{adams1998primer}, that preservation of probability 1 and classical entailment coincide.
This claim, as made there, is true, since the context fixes that what's meant is the \setfmla\ fragments of these relations.
But \Cref{fact:1super} goes farther, showing that $\{1\}$-preservation consequence is not fully classical, when the extra texture visible in a \setset\ framework is considered.
Moreover, this extra texture reveals $\{1\}$-preservation consequence to be supervaluational consequence.

As the \fmlaset\ framework is less commonly studied, we don't know of anyone who has claimed that $\hopen{0}{1}$-preservation consequence is classical.
We can see, though, that there would be a kind of justice to such a claim: it is indeed classical in the \fmlaset\ framework, just not in the full \setset\ framework we work with in this paper.

The characterization of super- and sub-valuationism in terms of probability preservation sheds a specific light on a property often stressed of both frameworks (for example in \citealt{williamson1994vagueness}), which is that super-truth and sub-truth fail to be truth-functional.
That is, in the same way the probability of a disjunction cannot be determined just from the probabilities of its disjuncts, the super-truth or otherwise of a disjunction cannot be determined just from whether its disjuncts are super-true or not.
(For more detailed discussion of truth-functionality, see \citealt{chemla2019suszko} or \citealt[\S 3.1]{humberstone:c}.)
So while truth-preservation and $\alpha$-probability-preservation may appear to coincide in the $\setfmla$ case when $\alpha=1$,  the $\setset$ case reveals that the notions behave fundamentally differently, even in that extreme case.\footnote{The reader may also wonder about the connection between $\alpha$-preservation consequence and the analog in the case of \L ukasiewicz's fuzzy propositional logics in which the set of truth values is $[0,1]$ and validity is defined as the preservation of the degree $\alpha$ from premises to conclusions, restricting the connectives to negation, conjunction, and disjunction (see  \citealt{hajek1998metamathematics} and \citealt{bergmann2008introduction} for more information on these logics). Assuming $v(\neg A)=1-v(A)$, $v(A\wedge B)=min(v(A),v(B))$, and $v(A\vee B)=max(v(A),v(B))$, $q$ fails to entail $p\vee \neg p$ in \L ukasiewicz's logic when $\alpha=1$, unlike supervaluationism. And for thresholds $\alpha$ below 1, the resulting \L ukasiewicz logics will preserve adjunction, unlike the corresponding probabilistic $\alpha$-preservation consequence relations. So probabilistic $\alpha$-preservation logics and their fuzzy counterparts differ across the board in the \setfmla\  case.} 

%The answer depends in part on how the semantics for conjunction and disjunction is defined, assuming for negation that $v(\neg A)=1-v(A)$. When $v(A\vee B)=max(v(A),v(B))$, so called weak disjunction, the logic differs from supervaluationism, since $q$ fails to entail $p\vee \neg p$ (assign $q$ the value 1 and $p$ the value $1/2$). There is an alternative semantics in terms of strong disjunction, where $v(A\vee B) =max(1,v(A)+v(B))$. But then $p\vee p$ fails to entail $p$, which also departs from supervaluationism. These divergences between \L ukasiewicz's logic and preservation consequence logic hold at other thresholds $\alpha \geq .5$. More generally, the corresponding logics differ at all thresholds in the \setfmla\ case: \L ukasiewicz's logics with weak conjunction will preserve adjunction at all thresholds unlike their probabilistic counterpart. }

%Using the corresponding conjunctions, these divergences can be extended to arbitrary thresholds, and since all probabilistic preservation relations coincide with classical logic in the $\fmlafmla$ case (see \Cref{fact:full-narrowing-preserves-setfmla,fact:full-widening-preserves-fmlaset} further below), this suffices to show that \L ukasiewicz's fuzzy logics differ from probabilistic preservation logics at all thresholds.
%}

We pause to note some results about supervaluational and subvaluational consequence shown in \citet[pp.\ 237--238]{kremer:SupervaluationbasedConsequenceRelations2003}:
\begin{itemize}
  \item if $\sq{\G}{\D}$ is supervaluationistically valid, then either $\D$ is empty or there is some $\delta \in \D$ where $\sq{\G}{\delta}$ is supervaluationistically valid; and
  \item if $\sq{\G}{\D}$ is subvaluationistically valid, then either $\G$ is empty or there is some $\gamma \in \G$ where $\sq{\gamma}{\D}$ is subvaluationistically valid.
\end{itemize}
So we can immediately conclude the same of $\{1\}$-preservation and $\hopen{0}{1}$-preservation, respectively.
Indeed, we will now work our way up to some results---named \Cref{fact:setfmla-to-setset,fact:fmlaset-captured} below---that in some sense extend these results to intermediate choices of upset.\footnote{Unfortunately, we didn't find a way to adapt \cite{kremer:SupervaluationbasedConsequenceRelations2003}'s elegant proofs of these above claims to our more general setting, so we've had to take a different, less elegant, approach.}

\section{Preservation consequence: general case}\label{sec:pres-general}
%\subsection{Discussion of consequence relations}
%\label{sec:pc-order-cons-relat}

There is some interesting texture to explore in the preservation consequence relations, outside the two extreme upsets $\{1\}$ and $\hopen{0}{1}$ that turn out to determine familiar consequence relations.
We open our discussion of this texture by developing some ideas and background, on the way to showing, in \Cref{sec:sufficient-for-preservation-invalidity}, three sufficient conditions for $\alpha$-preservation invalidity.
These sufficient conditions give us a grip on $\alpha$-preservation consequence for upsets besides these extremes.

\subsection{$\alpha$-satisfiability, $\alpha$-tautology, and dual upsets}
\label{sec:alpha-satisf-alpha}

For any upset $\alpha$, we have the following notions:
\begin{defn} \label{defn:sat-taut}
  A set $\G$  is $\alpha$-satisfiable iff there is some probabilistic model $\tuple{W, \alg, \dn{\;}, \prob}$ such that $\prob(\gamma) \in \alpha$ for each $\gamma \in \G$.
  A set $\D$ is $\alpha$-tautologous iff for every probabilistic model $\tuple{W, \alg, \dn{\;}, \prob}$, there is some $\delta \in \D$ such that $\prob(\delta) \in \alpha$.
\end{defn}

Note that $\alpha$-unsatisfiability and $\alpha$-tautology connect to special cases of $\alpha$-preservation validity: $\G$ is $\alpha$-unsatisfiable iff $\sq{\G}{\emptyset}$ is $\alpha$-preservation valid; and $\D$ is $\alpha$-tautologous iff $\sq{\emptyset}{\D}$ is $\alpha$-preservation valid.

\begin{defn} \label{defn:mirror-dual}
 The \demph{mirror image} $\malpha$ of $\alpha$ is $\abst{x \in \clos{0}{1}}{1 - x \in \alpha}$; and the \demph{dual} $\dalpha$ of $\alpha$ is $\clos{0}{1} \setminus \malpha$.
\end{defn}

It is immediate that $\dalpha$ is an upset, and that $\ddalpha = \alpha$.
It's also quick to see that no upset $\alpha$ can be self-dual, as $\dalpha$ contains $.5$ iff $\alpha$ does not.
These notions are interrelated as follows:

\begin{fact} \label{fact:tautology-satisfiability-dual}
  For any upset $\alpha$ and argument $\sq{\G}{\D}$, the argument $\sq{\G}{\D}$ is $\alpha$-preservation valid iff $\sq{\neg \D}{\neg \G}$ is $\dalpha$-preservation valid.

  As a special case of this, $\alpha$ and set $\D$ of sentences, $\D$ is $\alpha$-tautologous iff $\neg\D$ is $\dalpha$-unsatisfiable.
\end{fact}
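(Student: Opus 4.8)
The plan is to prove the contrapositive, model by model: for a fixed probabilistic model $\modl_\prob$, I will show that $\modl_\prob$ is an $\alpha$-preservation counterexample to $\sq{\G}{\D}$ if and only if that very same $\modl_\prob$ is a $\dalpha$-preservation counterexample to $\sq{\neg\D}{\neg\G}$. Since an argument is $\beta$-preservation valid exactly when no probabilistic model is a $\beta$-preservation counterexample to it (\Cref{defn:pres-con}), the stated biconditional follows immediately from this model-by-model correspondence.

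To establish that correspondence, I would first record the single arithmetical observation that drives everything: for any sentence $\phi$ we have $\prob(\neg\phi) = 1 - \prob(\phi)$, so by the definition of $\malpha$ in \Cref{defn:mirror-dual}, $\prob(\phi) \in \alpha$ iff $1 - \prob(\neg\phi) \in \alpha$ iff $\prob(\neg\phi) \in \malpha$; taking complements in $\clos{0}{1}$ and using $\dalpha = \clos{0}{1} \setminus \malpha$, this says $\prob(\phi) \in \alpha$ iff $\prob(\neg\phi) \notin \dalpha$, equivalently $\prob(\phi) \notin \alpha$ iff $\prob(\neg\phi) \in \dalpha$. Next I would unpack the two counterexample conditions. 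By \Cref{defn:pres-con}, $\modl_\prob$ is an $\alpha$-preservation counterexample to $\sq{\G}{\D}$ iff $\prob(\gamma) \in \alpha$ for every $\gamma \in \G$ and $\prob(\delta) \notin \alpha$ for every $\delta \in \D$; and $\modl_\prob$ is a $\dalpha$-preservation counterexample to $\sq{\neg\D}{\neg\G}$ iff $\prob(\neg\delta) \in \dalpha$ for every $\delta \in \D$ and $\prob(\neg\gamma) \notin \dalpha$ for every $\gamma \in \G$. By the observation, the premise clause of each condition is equivalent to the conclusion clause of the other, so the two conditions are literally the same. This gives the correspondence and hence the main claim.

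For the special case, I would instantiate $\G = \emptyset$. Since $\neg\emptyset = \emptyset$, the main claim yields that $\sq{\emptyset}{\D}$ is $\alpha$-preservation valid iff $\sq{\neg\D}{\emptyset}$ is $\dalpha$-preservation valid. By the remark following \Cref{defn:sat-taut}, the left-hand side says exactly that $\D$ is $\alpha$-tautologous, and the right-hand side says exactly that $\neg\D$ is $\dalpha$-unsatisfiable, which is the desired equivalence.

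I do not anticipate a genuine obstacle: the content lies entirely in unwinding the definitions of mirror image and dual while tracking which membership statements flip to non-membership. The one point requiring care is precisely that flip—passing from $\alpha$ to $\dalpha$ involves a set complement, so it is the ``all premises good'' clause on one side that lines up with the ``all conclusions bad'' clause on the other, and conversely; the involutive facts $1 - (1 - x) = x$ and $\ddalpha = \alpha$ are what make this pairing symmetric, so no separate argument for the converse direction is needed.
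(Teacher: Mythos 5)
Your proof is correct and is exactly what the paper's one-line proof (``Spelling out definitions, recalling that $\prob(\neg\delta) = 1 - \prob(\delta)$'') intends: a model-by-model identification of $\alpha$-preservation counterexamples to $\sq{\G}{\D}$ with $\dalpha$-preservation counterexamples to $\sq{\neg\D}{\neg\G}$, followed by instantiating $\G = \emptyset$ for the special case. The only quibble is terminological---what you establish is a counterexample correspondence rather than a contrapositive---but the argument itself is sound and matches the paper's approach.
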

\begin{proof}
  Spelling out definitions, recalling that $\prob(\neg \delta) = 1 - \prob(\delta)$ for any probability distribution $\prob$ and sentence $\delta$.
\end{proof}

\Cref{fact:tautology-satisfiability-dual} will be useful in what follows particularly when we focus on \setfmla\ and \fmlaset\ fragments of $\alpha$-preservation consequence relations, as it allows us to turn results about one of these fragments into results about the other, just for the dual upset.

In this paper, we do not explore $\alpha$-satisfiability and $\alpha$-tautology in any depth.
By \Cref{fact:tautology-satisfiability-dual}, it would be enough to focus on just one of these, but we do not develop any detailed picture of either one.\footnote{For a valuable and in-depth discussion of $\alpha$-satisfiability, see \cite{knight2002measuring}---noting that Knight speaks of `$\eta$-consistency' to mean what we would call `$\clos{\eta}{1}$-satisfiability'.
(That paper does not consider open upsets.)
We will be drawing on results from this paper as we go.}
Instead, we use these notions to describe and explore our real target: $\alpha$-preservation.

\subsection{Background from \cite{adams:UncertaintiesTransmittedPremises1975}}
\label{sec:backgr-from-adams-levine}

The results in this subsection are taken from \cite{adams:UncertaintiesTransmittedPremises1975}, although we restate them here in the forms we'll need them in.
We also prove them here, as they are not explicitly proved in \cite{adams:UncertaintiesTransmittedPremises1975}.

\begin{defn}
  Given a classically-valid \setfmla\ argument $\sq{\G}{\phi}$, say that a set $\G' \subseteq \G$ is \demph{minimally sufficient} iff $\sq{\G'}{\phi}$ is classically valid and there is no proper subset $\G'' \subsetneq \G'$ such that $\sq{\G''}{\phi}$ is classically valid.

  For any classically-valid \setfmla\ argument $\sq{\G}{\phi}$ whose minimally sufficient sets are $\G'_{1}, \ldots, \G'_{n}$, let $\ms{\G}{\phi}$ be the sentence $\bigvee_{1 \leq i \leq n} (\bigwedge \G'_{i})$.
\end{defn}

\begin{fact} \label{fact:ms-equiconsistent}
  For any classically-valid \setfmla\ argument $\sq{\G}{\phi}$ and any $\Sigma \subseteq \G$, the set $\Sigma \cup \{\neg \phi\}$ is classically consistent iff the set $\Sigma \cup \{\neg \ms{\Gamma}{\phi}\}$ is classically consistent.
\end{fact}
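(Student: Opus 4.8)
The plan is to prove the biconditional one direction at a time. The left-to-right direction rests on a single observation, that $\ms{\G}{\phi}$ classically entails $\phi$; the right-to-left direction, which I would argue by contraposition, rests on the fact that minimality of a sufficient set is ``absolute'', i.e.\ does not depend on the ambient set one regards it as sitting inside.

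First I would record that $\ms{\G}{\phi} \models_{CL} \phi$. By definition $\ms{\G}{\phi} = \bigvee_{1 \le i \le n}(\bigwedge \G'_{i})$, where $\G'_{1}, \dots, \G'_{n}$ are the minimally sufficient subsets of $\G$; since $\sq{\G}{\phi}$ is assumed classically valid, $\G$ itself is sufficient, so $n \ge 1$ and this disjunction is well defined. Each $\G'_{i}$ is sufficient, so $\bigwedge \G'_{i} \models_{CL} \phi$, hence the disjunction $\ms{\G}{\phi}$ does too; equivalently $\neg \phi \models_{CL} \neg \ms{\G}{\phi}$. The forward direction is then immediate: every classical model of $\Sigma \cup \{\neg \phi\}$ is a model of $\neg \ms{\G}{\phi}$, hence of $\Sigma \cup \{\neg \ms{\G}{\phi}\}$, so consistency of the former set transfers to the latter.

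For the reverse direction I would argue contrapositively. Suppose $\Sigma \cup \{\neg \phi\}$ is classically inconsistent, i.e.\ $\Sigma \models_{CL} \phi$. Since $\Sigma \subseteq \G$ is finite and $\sq{\Sigma}{\phi}$ is classically valid, $\Sigma$ has a $\subseteq$-minimal subset $\Sigma_{0}$ with $\sq{\Sigma_{0}}{\phi}$ classically valid. The key point is that $\Sigma_{0}$ is then a minimally sufficient subset \emph{of $\G$}: it is a subset of $\G$, it is sufficient, and the condition ``no proper subset is sufficient'' concerns only the proper subsets of $\Sigma_{0}$, so it is insensitive to whether $\Sigma_{0}$ is viewed inside $\Sigma$ or inside $\G$. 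Hence $\Sigma_{0} = \G'_{j}$ for some $j$, so $\bigwedge \Sigma_{0}$ is one of the disjuncts of $\ms{\G}{\phi}$, and since $\Sigma_{0} \subseteq \Sigma$ we get $\bigwedge \Sigma \models_{CL} \bigwedge \Sigma_{0} \models_{CL} \ms{\G}{\phi}$, i.e.\ $\Sigma \models_{CL} \ms{\G}{\phi}$. Therefore $\Sigma \cup \{\neg \ms{\G}{\phi}\}$ is classically inconsistent, completing the contrapositive.

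I expect no real obstacle here: the only load-bearing step is the absoluteness of minimal sufficiency just used, and everything else is bookkeeping. The one thing worth an explicit line is the degenerate case in which $\phi$ is a classical tautology: then $\emptyset$ is the unique minimally sufficient subset of $\G$, so $\ms{\G}{\phi} = \bigwedge \emptyset = \top$ and $\neg \ms{\G}{\phi} \equiv \bot$, whence both $\Sigma \cup \{\neg \phi\}$ and $\Sigma \cup \{\neg \ms{\G}{\phi}\}$ are inconsistent for every $\Sigma$, in agreement with the claim (and also covered by the general argument above).
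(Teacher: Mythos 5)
Your proposal is correct and follows essentially the same route as the paper's own proof: both directions rest on the observation that $\ms{\G}{\phi}$ classically entails $\phi$ (for left-to-right) and on extracting a minimally sufficient subset of $\G$ lying inside $\Sigma$ so that one of the disjuncts of $\ms{\G}{\phi}$ is entailed by $\Sigma$ (for right-to-left). Your extra remarks on the absoluteness of minimal sufficiency and on the degenerate case where $\phi$ is a tautology are correct refinements of steps the paper leaves implicit, but they do not change the argument.
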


\begin{proof}
  Recall that $\ms{\G}{\phi}$ is a disjunction each of whose disjuncts classically entails $\phi$; so $\sq{\ms{\G}{\phi}}{\phi}$ is classically valid.
  But if $\Sigma \cup \{\neg \ms{\G}{\phi}\}$ is inconsistent then $\sq{\Sigma}{\ms{\G}{\phi}}$ is classically valid; so by transitivity of classical validity $\sq{\Sigma}{\phi}$ would have to be classically valid, and thus $\Sigma \cup \{\neg \phi\}$ classically inconsistent.

  For the other direction, suppose that $\Sigma \cup \{\neg \phi\}$ is classically inconsistent; then $\sq{\Sigma}{\phi}$ is classically valid.
  Then, since $\Sigma \subseteq \G$, there must be some minimally sufficient $\G'$ such that $\G' \subseteq \Sigma$, and so $\sq{\Sigma}{\bigwedge \G'}$ is classically valid.
  But then $\sq{\Sigma}{\ms{\G}{\phi}}$ is classically valid as well, and so $\Sigma \cup \{\neg \ms{\G}{\phi}\}$ is classically inconsistent.
\end{proof}

\begin{fact}[see \protect{\citealt[p.\ 434]{adams:UncertaintiesTransmittedPremises1975}}] \label{fact:inconsistent-ms-to-zero}
  Suppose the \setfmla\ argument $\sq{\G}{\phi}$ is classically valid, and that all its minimally sufficient sets are classically inconsistent.
  Then for any probabilistic model $\modl_{\prob}$, there is a probabilistic model $\modl_{\prob'}$ such that:
  \begin{itemize}
    \item for all $\gamma \in \G$, we have $\prob'(\gamma) = \prob(\gamma)$, and
    \item $\prob'(\phi) = 0$.
  \end{itemize}
\end{fact}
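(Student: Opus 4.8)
The plan is to reduce to a finite model, mine the hypothesis for a stock of worlds at which $\phi$ fails, and then shift probability mass onto those worlds while leaving the probability of every premise untouched.

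First, by \Cref{fact:power-set-model} applied to the finite set $\G \cup \{\phi\}$, it is enough to treat the case where $\modl_{\prob}$ is the power-set model over $At(\G \cup \{\phi\})$: its worlds $s$ are the state descriptions over these atoms, and $\prob$ is fixed by the masses $\prob(\{s\})$. For a world $s$ write $\G(s) = \abst{\gamma \in \G}{s \in \dn{\gamma}}$ for the set of premises true at $s$, and call two worlds \emph{premise-equivalent} when $\G(s) = \G(s')$; then each $\prob(\gamma)$ is the sum of the $\prob$-masses of the premise-equivalence classes all of whose worlds satisfy $\gamma$.

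Second, I would extract the geometric content of the hypothesis. Since every minimally sufficient set of $\sq{\G}{\phi}$ is classically inconsistent, $\ms{\G}{\phi}$ — a disjunction of conjunctions of inconsistent sets — is classically equivalent to $\bot$, so $\neg \ms{\G}{\phi}$ is a classical tautology, and \Cref{fact:ms-equiconsistent} then tells us that for every $\Sigma \subseteq \G$ the set $\Sigma$ is classically consistent iff $\Sigma \cup \{\neg \phi\}$ is. Applying this with $\Sigma = \G(s)$, which is consistent because it is satisfied at $s$, we learn that $\G(s) \cup \{\neg \phi\}$ is classically consistent for every world $s$: that is, for each $s$ there is a world $t \in \dn{\neg \phi}$ with $\G(t) \supseteq \G(s)$.

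Third, I would use this to define $\prob'$. In the cleanest scenario one can always take the witness $t$ above with $\G(t) = \G(s)$, and then $\prob'$ is obtained by relocating, within each premise-equivalence class, the whole of that class's $\prob$-mass onto its $\neg\phi$-worlds (in any fixed fashion); this leaves the total mass of each class — and hence every $\prob(\gamma)$ — unchanged while supporting $\prob'$ inside $\dn{\neg \phi}$, so that $\prob'(\phi) = 0$, and one finishes by reading off the associated power-set model and checking it is a genuine probabilistic model. The hard part is closing the gap between what the second step actually delivers — a $\neg\phi$-witness $t$ with $\G(t) \supseteq \G(s)$, which may make \emph{strictly more} premises true than $s$ does — and the exact premise-probability preservation required: one must either choose the witnesses more carefully, or argue globally that the tuple $(\prob(\gamma))_{\gamma \in \G}$ is realizable by a probability distribution living entirely on $\dn{\neg \phi}$, for instance by a separating-hyperplane argument in which the inconsistency of all minimally sufficient sets is used to rule out any separating functional. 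That step is the delicate core of the argument, and the place where I would expect to have to work hardest.
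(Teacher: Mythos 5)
Your proposal follows the same strategy as the paper's proof --- reduce to the power-set model over $At(\G \cup \{\phi\})$ via \Cref{fact:power-set-model}, use \Cref{fact:ms-equiconsistent} to manufacture $\neg\phi$-worlds, and shift probability mass onto them --- but it stops exactly where the real work begins, and you say so yourself. Applying \Cref{fact:ms-equiconsistent} with $\Sigma = \G(s)$ only yields a $\neg\phi$-witness making \emph{at least} the premises in $\G(s)$ true, and shifting mass along such witnesses can inflate premise probabilities; as written, then, this is an incomplete proof. The paper closes the gap by taking $\Sigma_w$ to be $\G(w)$ together with the \emph{negations} of the premises false at $w$, so that the witness $w^{\dagger}$ agrees with $w$ on the truth value of every premise and the mass shift fixes each $\prob(\gamma)$ exactly.

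That said, your hesitation at this step is well placed. The paper's move applies \Cref{fact:ms-equiconsistent} to a set $\Sigma_w \not\subseteq \G$, and the direction it needs (consistency of $\Sigma \cup \{\neg\ms{\G}{\phi}\}$ implies consistency of $\Sigma \cup \{\neg\phi\}$) is exactly the direction whose proof uses $\Sigma \subseteq \G$. In fact no choice of witnesses can work in general. Take $\G = \{p,\, q,\, \neg p \lor \neg q\}$ and $\phi = \neg p \land \neg q$: the only sufficient subset of $\G$ is $\G$ itself, which is inconsistent, so the hypotheses of the Fact hold; yet in \emph{any} probabilistic model, $\prob(\phi) = 2 - \prob(p) - \prob(q) - \prob(\neg p \lor \neg q)$ is already determined by the premise probabilities, and the uniform distribution over the four $p,q$-valuations forces $\prob'(\phi) = \tfrac{1}{4} \neq 0$ for every $\prob'$ agreeing with it on $\G$. (Concretely: the world where $p$ and $q$ are both false satisfies $\phi$, and its $\Sigma_w$ classically entails $\phi$, so it has no premise-profile-preserving $\neg\phi$-witness.) So the ``delicate core'' you isolate is not merely hard --- it is the point at which the statement, as formulated, fails; any repair along the lines you sketch (more careful witnesses, or a global realizability argument) would have to strengthen the hypotheses so as to exclude conclusions that, as here, are entailed by a consistent set of premise \emph{literals} without being entailed by any consistent set of premises.
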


\begin{proof}
  Let $\modl_{\prob}$ be given.
  We move first to $\modl^{\star} = \tg{\modl}{\G \cup \{\phi\}}{\prob}$, as given by \Cref{fact:power-set-model}.
  That is, $\modl^{\star} = \tuple{W^{\star}, \alg^{\star}, \dn{\;}^{\star}, \prob^{\star}}$, where:
  \begin{itemize}
    \item $W^{\star} = \wp(At(\G \cup \{\phi\}))$;
    \item $\alg^{\star} = \wp(W^{\star})$;
    \item for all $p \in At(\G \cup \{\phi\})$, we have $\dn{p}^{\star} = \abst{w \in W^{\star}}{p \in W}$; and
    \item for all $\gamma \in \G \cup \{\phi\}$, we have $\prob^{\star}(\gamma) = \prob(\gamma)$.
  \end{itemize}

  Now, we arrive at the desired $\modl_{\prob'}$ by modifying just the last coordinate of $\modl^{\star}$.
  That is, $\modl_{\prob'} = \tuple{W^{\star}, \alg^{\star}, \dn{\;}^{\star}, \prob'}$, and it remains just to specify $\prob'$ and to show the claimed results about it.

  First, note that since all of $\sq{\G}{\phi}$'s minimally sufficient sets are inconsistent, so too is their disjunction, which is to say that $\ms{\G}{\phi}$ is inconsistent.
  As such, we know that $\dn{\ms{\G}{\phi}}^{\star} = \emptyset$.

  Consider, then, any $w \in \dn{\phi}^{\star}$.
  Let $\Sigma_{w} = \abst{\gamma}{\gamma \in \G \text{ and } w \in \dn{\gamma}^{\star}} \cup \abst{\neg \gamma}{\gamma \in \G \text{ and } w \not\in \dn{\gamma}^{\star}}$.
  The set $\Sigma_{w} \cup \{\neg \ms{\G}{\phi}\}$ is classically consistent, since all its members are true at $w$, and so by \Cref{fact:ms-equiconsistent} the set $\Sigma_{w} \cup \{\neg \phi\}$ is also classically consistent.
  Since $At(\Sigma_{w} \cup \{\neg \phi\}) \subseteq At(\G \cup \{\phi\})$, as we've defined $W^{\star}$ and $\dn{\;}^{\star}$ this is enough to conclude that there is some $w^{\dagger} \in W^{\star}$ such that for every $\sigma \in \Sigma_{w} \cup \{\neg \phi\}$, we have $w^{\dagger} \in \dn{\sigma}^{\star}$.
  And for any $w \not \in \dn{\phi}^{\star}$, let $w^{\dagger} = w$.
  This gives us, for each $w \in W^{\star}$, some corresponding $w^{\dagger}$ where: 1) $w^{\dagger} \not\in \dn{\phi}^{\star}$, and 2) for every $\gamma \in \G$, we have $w \in \dn{\gamma}^{\star}$ iff $w^{\dagger} \in \dn{\gamma}^{\star}$.

  Now, for any $w \in W^{\star}$, let $^{\dagger}w$ be its $^{\dagger}$-preimage, the set $\abst{x}{x^{\dagger} = w}$.
  Note that whenever $w \in \dn{\phi}'$, then the set $^{\dagger}w$ is empty.
  Using this, we define $\prob'$ by defining it on singletons as $\prob'(\{w\}) = \sum_{x \in ^{\dagger}w} \prob^{\star}(x)$.

  In effect, we are moving from $\prob^{\star}$ to $\prob'$ by taking the weight of probability at each world $w$ and shifting it to $w^{\dagger}$.
  Any worlds $w \not\in \dn{\phi}^{\star}$ retain all the probability they begin with under this operation, since for all such worlds $w^{\dagger} = w$.
  Such worlds might, however, end up with more total probability, since such a $w$ might still be $x^{\dagger}$ for some $x \neq w$.
  And any worlds $w \in \dn{\phi}^{\star}$ end up with 0 probability, since for all such worlds there is no $x$ with $x^{\dagger} = w$.

  Clearly, then, we have $\prob'(\phi) = 0$, as desired.
  It remains to show that $\prob'(\gamma) = \prob^{\star}(\gamma)$ for all $\gamma \in \G$.
  Take any such $\gamma$, then, with $\dn{\gamma}^{\star} = \{w_{1}, \ldots, w_{n}\}$.
  We know $\prob^{\star}(\gamma) = \prob^{\star}(\{w_{1}\}) + \ldots + \prob^{\star}(\{w_{n}\})$, and that $\prob'(\gamma) = \prob'(\{w_{1}\}) + \ldots + \prob'(\{w_{n}\})$.
  But for any of these $w_{i}$, the world $w_{i}^{\dagger}$ must be one of these $w_{j}$, since we know that $w_{i} \in \dn{\gamma}^{\star}$ iff $w_{i}^{\dagger} \in \dn{\gamma}^{\star}$.
  And similarly, for any $w \in W^{\star}$ that is not among the $w_{i}$s, $w^{\dagger}$ is also not among them, for the same reason.
  So while we might shift some probability from addend to addend, we do not shift any probability away from or into the whole collection, and so the two sums must be the same.
  Thus, we have what we're after.
\end{proof}

\subsection{Sufficient conditions for preservation invalidity}
\label{sec:sufficient-for-preservation-invalidity}

Here, we use the foregoing to compile some results about conditions that are sufficient to show a given argument to be $\alpha$-preservation invalid.
First, we prove that all $\alpha$-preservation consequence relations are subclassical:

\begin{fact} \label{fact:preservation-is-subclassical}
  For any upset $\alpha$, if $\sq{\G}{\D}$ is classically invalid, then it is $\alpha$-preservation invalid.
\end{fact}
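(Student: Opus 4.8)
The plan is to exploit the fact that classical invalidity is witnessed by a single world: if $\sq{\G}{\D}$ is classically invalid, there is a probabilistic model $\modl_{\prob}$ and a world $w \in W$ with $w \in \dn{\gamma}$ for every $\gamma \in \G$ and $w \notin \dn{\delta}$ for every $\delta \in \D$. From here I want to build an $\alpha$-preservation counterexample, that is, a probability function on which every premise gets probability in $\alpha$ and every conclusion gets probability outside $\alpha$. The natural move is to redistribute all the probability mass onto the single world $w$.

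Concretely, first I would invoke \Cref{fact:power-set-model} with $\G$ taken to be $\G \cup \D$ (so that the finite power-set model records the relevant atoms), obtaining a finite model $\modl^{\star}$ with $\alg^{\star} = \wp(W^{\star})$ in which each singleton is measurable; classical invalidity of $\sq{\G}{\D}$ guarantees there is a state-description world $w^{\star} \in W^{\star}$ satisfying all of $\G$ and none of $\D$. Then I would define a new probability function $\prob'$ on this same model by putting $\prob'(\{w^{\star}\}) = 1$ and $\prob'(\{v\}) = 0$ for all other $v \in W^{\star}$; by finite additivity this is a genuine probability distribution. Since $w^{\star} \in \dn{\gamma}^{\star}$ for each $\gamma \in \G$, we get $\prob'(\gamma) = 1 \in \alpha$ (using that every upset contains $1$), and since $w^{\star} \notin \dn{\delta}^{\star}$ for each $\delta \in \D$, we get $\prob'(\delta) = 0 \notin \alpha$ (using that every upset excludes $0$). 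So $\modl_{\prob'}$ is an $\alpha$-preservation counterexample, and the argument is $\alpha$-preservation invalid.

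Alternatively, one can skip \Cref{fact:power-set-model} entirely: take the witnessing classical model directly, and if the relevant singletons are not in the algebra, replace the probability function with the $\{0,1\}$-valued function $\prob'(A) = 1$ if $w \in A$ and $\prob'(A) = 0$ otherwise, which is a probability function on the \emph{given} algebra $\alg$ (it is finitely additive, assigns $0$ to $\emptyset$, and respects complementation). Then $\prob'(\gamma) = 1$ for $\gamma \in \G$ and $\prob'(\delta) = 0$ for $\delta \in \D$ exactly as before. I would probably present whichever of these is cleaner given the surrounding conventions; the power-set version has the advantage of staying within the already-established machinery.

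There is no real obstacle here — the only thing to be careful about is the bookkeeping that a point-mass function is a legitimate probability function in the sense of the paper's definition (finite additivity, $\prob(\emptyset)=0$, $\prob(W\setminus A)=1-\prob(A)$), and the appeal to the two standing assumptions on upsets, namely $1 \in \alpha$ and $0 \notin \alpha$. This is, in effect, the familiar observation that classical models are exactly the $\{0,1\}$-valued probabilistic models, so classical counterexamples are automatically $\alpha$-preservation counterexamples for every admissible $\alpha$.
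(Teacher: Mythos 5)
Your proof is correct and uses essentially the same idea as the paper: the paper simply takes a one-world probabilistic model whose sole world is the classical counterexample, so that every premise gets probability $1 \in \alpha$ and every conclusion gets probability $0 \notin \alpha$. Your point-mass construction (in either variant) is just a slightly more elaborate route to the same two-valued counterexample.
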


\begin{proof}
  Take a one-world probabilistic model $\modl = \tuple{\{w\}, \wp{\{w\}}, \dn{\;}, \prob}$ where $\dn{\;}$ is a classical valuation that provides a counterexample to $\sq{\G}{\D}$.
  In this model, $\prob(\gamma) = 1$ for every $\gamma \in \G$ and $\prob(\delta) = 0$ for every $\delta \in \D$, so this is an $\alpha$-preservation counterexample to $\sq{\G}{\D}$, regardless of $\alpha$.
\end{proof}

Before moving to our next result, we pause to cite a theorem due to Adams, as we draw on this result in what follows:

\begin{theorem}[\protect{\citealp[Thm.\ 14, p.\ 39]{adams1998primer}}] \label{fact:adams-thm14}
  Take a classically valid \setfmla\ argument $\sq{\G}{\delta}$, where $\G = \{\gamma_{1}, \ldots, \gamma_{n}\}$.
  If $\G$ is classically consistent, and if $\sq{\G'}{\delta}$ is classically invalid for every $\G' \subsetneq \G$, then for any sequence $x_{1}, \ldots, x_{n}$ of numbers from $\clos{0}{1}$ such that $\sum_{1 \leq i \leq n} x_{i} \leq 1$, there is a probability function $\prob$ such that $\prob(\phi) = 1 - \sum_{1 \leq i \leq n} x_{i}$, and such that $\prob(\gamma_{i}) = 1 - x_{i}$ for each $1 \leq i \leq n$.
\end{theorem}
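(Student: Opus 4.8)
The plan is to realise the prescribed numbers by a single finite probabilistic model, obtained as a convex combination of $n+1$ carefully chosen classical valuations: one valuation $v_i$ for each premise $\gamma_i$, carrying weight $x_i$, together with one valuation $v_0$ on which all of $\G$ is true, carrying the leftover weight $1 - \sum_i x_i$. (The `$\prob(\phi)$' in the statement should of course read `$\prob(\delta)$'.) Everything then reduces to checking, world by world, which of $\gamma_1,\dots,\gamma_n,\delta$ holds where.

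First I would extract the valuations from the hypotheses. Fix $i$ with $1 \le i \le n$. Since $\sq{\G \setminus \{\gamma_i\}}{\delta}$ is classically invalid, there is a classical valuation $v_i$ making every $\gamma_j$ with $j \ne i$ true and $\delta$ false; moreover $v_i$ must make $\gamma_i$ false, since otherwise $v_i$ would make all of $\G$ true, and then validity of $\sq{\G}{\delta}$ would force $\delta$ true at $v_i$, a contradiction. Since $\G$ is classically consistent, there is also a valuation $v_0$ making every $\gamma_j$ true, and hence (again by validity of $\sq{\G}{\delta}$) making $\delta$ true. Restricted to the atoms occurring in $\G \cup \{\delta\}$, these valuations are pairwise distinct: $v_0$ and $v_i$ disagree on $\gamma_i$, and for $i \ne j$ the valuations $v_i$ and $v_j$ disagree on $\gamma_i$. (The distinctness is not strictly needed, but it is reassuring.)

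Next I would build the model. Take $W = \{0, 1, \dots, n\}$, let $\alg = \wp(W)$, and let $\dn{\;}$ be the classical denotation function generated by setting $\dn{p} = \abst{k \in W}{v_k \text{ makes } p \text{ true}}$ for each atom $p$ (equivalently, one can invoke \Cref{fact:power-set-model} for $\G \cup \{\delta\}$ and pick out the worlds corresponding to $v_0, \dots, v_n$; those coordinates do not depend on any probability function). Then define $\prob$ on singletons by $\prob(\{i\}) = x_i$ for $1 \le i \le n$ and $\prob(\{0\}) = 1 - \sum_{1 \le i \le n} x_i$; by hypothesis these are nonnegative and sum to $1$, so finite additivity extends this to a genuine probability function on $\alg$, and $\dn{\;}$ sends each $\gamma_i$ and $\delta$ to the set of $k \in W$ at which $v_k$ makes it true.

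Finally I would read off the probabilities: since $\gamma_i$ is true at world $0$ and at every world $j$ with $j \ne i$, and false at world $i$, we get $\prob(\gamma_i) = (1 - \sum_k x_k) + \sum_{j \ne i} x_j = 1 - x_i$; and since $\delta$ is true at world $0$ and false at each world $i$ with $1 \le i \le n$, we get $\prob(\delta) = 1 - \sum_k x_k$, as required. I do not expect a real obstacle here — the only step carrying genuine content is pulling the valuations $v_i$ out of the non-validity of the proper subarguments — but it is worth a line to note that the degenerate cases are covered unchanged: if $n = 0$ then $\delta$ is a tautology and $\prob(\delta) = 1$ on the one-world model, and if $\sum_i x_i = 1$ then world $0$ simply gets weight $0$.
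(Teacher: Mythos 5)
Your construction is correct, and every step checks out: the existence of $v_i$ falsifying $\delta$ while satisfying $\G\setminus\{\gamma_i\}$ comes from the invalidity of that proper subargument, the forced falsity of $\gamma_i$ at $v_i$ comes from the validity of $\sq{\G}{\delta}$, and the resulting probabilities $\prob(\gamma_i)=1-x_i$ and $\prob(\delta)=1-\sum_i x_i$ follow by finite additivity exactly as you compute. You are also right that the `$\prob(\phi)$' in the statement is a typo for $\prob(\delta)$. There is, however, nothing in the paper to compare against: the authors import this result from Adams and their ``proof'' is literally ``See the cited work.'' So you have supplied something the paper deliberately omits --- a self-contained argument --- and it is the standard one (and essentially Adams's own): realise the target numbers as a convex combination of one satisfying valuation for $\G$ plus, for each $i$, one valuation witnessing the non-validity of $\sq{\G\setminus\{\gamma_i\}}{\delta}$. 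Two small observations, neither a gap: you use the minimality hypothesis only through the $n$ maximal proper subsets $\G\setminus\{\gamma_i\}$, so your argument shows the theorem holds under that weaker assumption; and the pairwise distinctness of the valuations is indeed irrelevant, since your worlds are abstract indices and coincident valuations would not disturb the probability computation.
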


\begin{proof}
 See the cited work.
\end{proof}

\begin{fact} \label{fact:setfmla-captured}
  If $\alpha \neq \{1\}$, if $\G$ is $\alpha$-satisfiable, $\delta$ is not a classical tautology, and there is no $\gamma \in \G$ such that $\sq{\gamma}{\delta}$ is classically valid, then $\sq{\G}{\delta}$ is not $\alpha$-preservation valid.
\end{fact}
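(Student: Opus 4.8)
The plan is to reduce the general claim about an arbitrary $\alpha$-satisfiable $\G$ to the clean single-premise situation handled by \Cref{fact:adams-thm14}. First I would fix a probabilistic model $\modl_{\prob}$ witnessing $\alpha$-satisfiability of $\G$, so $\prob(\gamma) \in \alpha$ for every $\gamma \in \G$. The target is a (possibly new) model giving every $\gamma$ a probability in $\alpha$ while giving $\delta$ a probability \emph{not} in $\alpha$. Since $\alpha \neq \{1\}$, there is some value $t$ with $0 < t < 1$ and $t \notin \alpha$ (if $\alpha$ is closed with threshold $x$, any $t < x$ works; if $\alpha$ is open with threshold $x$, then $t = x$ itself works, and also anything below). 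The aim is to push $\prob(\delta)$ down to such a value while perturbing the $\prob(\gamma)$ only slightly — little enough that they stay inside the upset $\alpha$, which is possible precisely because $\alpha$, being an upset containing $1$ properly (as $\alpha \neq \{1\}$), leaves room above each $\prob(\gamma) < 1$, and for those $\gamma$ with $\prob(\gamma) = 1$ we have even more slack.

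The key step is to handle the case where $\sq{\G}{\delta}$ is itself classically valid — if it is classically invalid, \Cref{fact:preservation-is-subclassical} already gives the conclusion, so that case is immediate. So assume $\sq{\G}{\delta}$ classically valid. Because $\delta$ is not a classical tautology, $\G$ is nonempty and some minimally sufficient subset $\G' \subseteq \G$ exists, with $\sq{\G'}{\delta}$ classically valid and minimal. By hypothesis no single $\gamma \in \G$ already entails $\delta$, so $\G'$ has at least two elements; in particular every proper subset of $\G'$ fails to entail $\delta$. Now I would split on whether $\G'$ is classically consistent. If $\G'$ is classically inconsistent, then \emph{all} minimally sufficient sets of $\sq{\G'}{\delta}$ are inconsistent (there is just $\G'$ itself, up to the minimality bookkeeping — more carefully, one applies \Cref{fact:inconsistent-ms-to-zero} to the argument $\sq{\G'}{\delta}$), so \Cref{fact:inconsistent-ms-to-zero} hands us a model agreeing with $\prob$ on all of $\G$ (first pass through \Cref{fact:power-set-model} on $\G \cup \{\delta\}$, then redistribute) but with $\prob'(\delta) = 0 \notin \alpha$; since the $\gamma$-probabilities are untouched they remain in $\alpha$, and we are done.

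If instead $\G'$ is classically consistent, I would apply \Cref{fact:adams-thm14} to $\sq{\G'}{\delta}$: enumerate $\G' = \{\gamma_1, \ldots, \gamma_k\}$, and choose a sequence $x_1, \ldots, x_k$ of small nonnegative reals with $\sum_i x_i \le 1$, in fact with $\sum_i x_i$ equal to some value $s$ such that $1 - s \notin \alpha$ — we can take $s$ as close to $1$ as we like since $\alpha \neq \{1\}$ means $1-s$ can be made to dodge $\alpha$, or more simply choose $s$ with $1 - s = t$ for the $t$ fixed above — while keeping each individual $x_i$ small enough that $1 - x_i \in \alpha$. This requires $k \ge 1$ (true) and is feasible by spreading the deficit $s$ across the $k \ge 2$ premises, or even concentrating it if one $\gamma_i$ has enough slack; the point is that $\alpha$ being an upset not equal to $\{1\}$ gives a genuine interval of good values below $1$. \Cref{fact:adams-thm14} then yields a probability function $\prob'$ with $\prob'(\delta) = 1 - s \notin \alpha$ and $\prob'(\gamma_i) = 1 - x_i \in \alpha$ for each $\gamma_i \in \G'$. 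Finally, the sentences in $\G \setminus \G'$ are not mentioned by \Cref{fact:adams-thm14}, so I would pass through \Cref{fact:power-set-model} on $\G \cup \{\delta\}$ first — realizing $\prob$ as a finite power-set model, then modifying the last coordinate as in the proof of \Cref{fact:inconsistent-ms-to-zero} — to arrange simultaneously that $\prob'(\gamma)$ stays in $\alpha$ for the remaining $\gamma$; alternatively, observe that we may simply apply Adams' construction inside the finite model and argue the other premises' probabilities move by a controllable amount, or note that $\alpha$-satisfiability of $\G$ together with consistency lets us re-weight. The main obstacle I anticipate is exactly this bookkeeping: Adams' theorem only constrains the premises in the minimally sufficient set, so the technical heart of the argument is simultaneously controlling $\prob'(\gamma)$ for $\gamma \in \G \setminus \G'$ — keeping them in $\alpha$ — while forcing $\prob'(\delta) \notin \alpha$; I expect the paper resolves this either by a careful mixture of the Adams model with the original $\alpha$-satisfying model, or by invoking the open/closed structure of $\alpha$ to get the needed slack.
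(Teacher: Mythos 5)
Your overall strategy is the paper's: dispose of the classically invalid case via \Cref{fact:preservation-is-subclassical} (the paper leaves this step implicit, so making it explicit is a small improvement), then analyse minimally sufficient subsets, using \Cref{fact:adams-thm14} when consistency is available and \Cref{fact:inconsistent-ms-to-zero} when it is not, with the same numerical trick of spreading a deficit $1-y$ (for $y \notin \alpha$ with $\frac{y+1}{2} \in \alpha$) over at least two essential premises. But your case split is subtly and consequentially different. You fix \emph{one} minimally sufficient $\G' \subseteq \G$ and branch on whether that $\G'$ is consistent; the paper branches on whether \emph{some} entailing subset of $\G$ is consistent versus \emph{all} entailing subsets being inconsistent. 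This matters in your inconsistent branch: \Cref{fact:inconsistent-ms-to-zero} applied to the sub-argument $\sq{\G'}{\delta}$ only guarantees that the redistributed $\prob'$ agrees with $\prob$ on the members of $\G'$ --- the $\dagger$-map in that proof is built to preserve the truth profile of the premises of the argument it is applied to, and nothing else --- so your claim that the ``$\gamma$-probabilities are untouched'' fails for $\gamma \in \G \setminus \G'$, which may be pushed out of $\alpha$. And if your chosen $\G'$ is inconsistent while some \emph{other} entailing subset is consistent, you are in the wrong branch altogether: the correct move is to enter the inconsistent branch only when every entailing subset is inconsistent, for then \Cref{fact:inconsistent-ms-to-zero} applies to the full argument $\sq{\G}{\delta}$ and preserves the probability of every member of $\G$.

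In the consistent branch you correctly identify the real difficulty --- Adams's theorem constrains only the premises of the minimal set, leaving $\prob'(\gamma)$ uncontrolled for $\gamma \in \G \setminus \G'$ --- but none of your three suggested repairs is carried out, and the ``controllable perturbation'' idea in particular cannot work: for $\G = \{p, q, \neg p \lor \neg q\}$, $\delta = p \land q$, and $\alpha = \clos{\frac{2}{3}}{1}$, the Adams distribution with $\prob'(p)=\prob'(q)=\frac{3}{4}$ forces $\prob'(\neg p \lor \neg q) = 1 - \prob'(p\land q) = \frac{1}{2} \not\in \alpha$, an error of fixed size. So your proposal is incomplete at exactly the point you flag. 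For what it is worth, the paper's write-up does not do more here either: it applies the cited theorem to the minimal consistent set and then asserts that every $\gamma \in \G$ receives a probability in $\alpha$, without invoking the $\alpha$-satisfiability of $\G$ in that branch. You have located the delicate step correctly; a complete argument still owes a construction keeping the remaining premises in $\alpha$ while forcing $\prob'(\delta)$ below it.
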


\begin{proof}
  Consider all $\G' \subseteq \G$ such that $\sq{\G'}{\delta}$ is classically valid.
  Either some such $\G'$ is classically consistent, or all such $\G'$ are classically inconsistent.

  \begin{itemize}
    \item If there is some such $\G'$ that is classically consistent, take some $\G'' \subseteq \G'$ such that $\sq{\G''}{\delta}$ is classically valid and there is no $\G''' \subsetneq \G''$ with $\sq{\G'''}{\delta}$ classically valid.
          Since $\G'$ is classically consistent, $\G''$ must be as well.

          This meets the assumptions of \Cref{fact:adams-thm14}, so we apply that result.
          Where $|\G''| = n$, all that remains is to choose an appropriate sequence $x_{1}, \ldots, x_{m}$ with sum at most 1 to arrive at our desired $\alpha$-preservation counterexample.
          First, choose some $y \not\in \alpha$ such that $\frac{y + 1}{2} \in \alpha$.\footnote{This is always possible.
          If $\alpha = \hopen{0}{1}$, then let $y$ be 0; otherwise, let $x$ be the threshold of $\alpha$, and choose some positive $\varepsilon$ such that $\varepsilon < \min(x, 1 - x)$ and let $y = x - \varepsilon$.
          Then since $\varepsilon$ is positive we have $y < x$, and so $y \not \in \alpha$.
          To see that $\frac{y + 1}{2} \in \alpha$, note that $\frac{y + 1}{2} = \frac{x - \varepsilon + 1}{2} = x + \frac{- x - \varepsilon + 1}{2}$.
          This must be $> x$ since $\varepsilon < 1 - x$, so it's in $\frac{y + 1}{2} \in \alpha$.
          }

          By assumption, there is no $\G'$ with $|\G'| < 2$ such that $\sq{\G'}{\delta}$ is classically valid, so $|\G''| \geq 2$.
          Our desired sequence then has $x_{1} = x_{2} = 1 - \frac{y + 1}{2}$, and $x_{i} = 0$ for $i > 2$, if there are any.

          The cited theorem then assures us that there is a probability distribution $\prob$ such that $\prob(\gamma) \in \alpha$ for each $\gamma \in \G$, since for each such $\gamma$, either $\prob(\gamma) = \frac{y + 1}{2}$ or $\prob(\gamma) = 1$, and such that $\prob(\delta) = 1 - (2 - (y + 1)) = y$.
          By our choice of $y$, then, any probabilistic model with $\prob$ as its final coordinate is an $\alpha$-preservation counterexample to $\sq{\G}{\delta}$, as desired.

    \item If all such $\G'$ are classically inconsistent, we note that $\G$ is $\alpha$-satisfiable by assumption, so there is a $\prob$ such that $\prob(\gamma) \in \alpha$ for every $\gamma \in \G$.
          We then apply \Cref{fact:inconsistent-ms-to-zero} to conclude that there is some $\prob'$ that is an $\alpha$-preservation counterexample to $\sq{\G}{\delta}$, as desired.
  \end{itemize}
\end{proof}

\Cref{fact:setfmla-captured} drastically narrows down the ways a \setfmla\ argument $\sq{\G}{\phi}$ can come to be $\alpha$-preservation valid for $\alpha$ other than $\{1\}$---only if $\G$ itself is not $\alpha$-satisfiable, or $\delta$ is a classical tautology, or there is some single $\gamma \in \G$ such that $\sq{\gamma}{\delta}$ is classically valid.
There is no other way.
For example, it follows immediately from \Cref{fact:setfmla-captured} that $\sq{p, q}{p \land q}$ is $\alpha$-preservation invalid for all $\alpha \neq \{1\}$.\footnote{Recall from \Cref{sec:super-subv} that when $\alpha = \{1\}$ then $\alpha$-preservation consequence is exactly supervaluational, and this in turn is classical for \setfmla\ arguments, so this argument \emph{is} $\alpha$-preservation valid in that special case.}
After all, the set $\{p, q\}$ is $\alpha$-satisfiable for any $\alpha$, but $p \land q$ is not a classical tautology and neither $\sq{p}{p \land q}$ nor $\sq{q}{p \land q}$ is classically valid.

Essentially, what \Cref{fact:setfmla-captured} tells us, for \setfmla\ arguments in these preservation consequence relations, is this.
Premises can interact with each other (to reach $\alpha$-unsatisfiability), and any single premise can interact with the conclusion (to reach classical validity), and the conclusion alone might suffice (when it is a classical tautology), but: we can never have a case where validity is secured by premises interacting with each other \emph{and} the conclusion.
There is a slipperiness to each $\alpha$-preservation consequence, at least in its \setfmla\ fragment.

This can be extended directly to the full \setset\ framework, however:
\begin{corollary} \label{fact:setfmla-to-setset}
  If $\alpha \neq \{1\}$, if $\G$ is $\alpha$-satisfiable, $\bigvee \D$ is not a classical tautology, and if there is no $\gamma \in \G$ such that $\sq{\gamma}{\bigvee \D}$ is classically valid, then $\sq{\G}{\D}$ is not $\alpha$-preservation valid.
\end{corollary}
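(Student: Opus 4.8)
The plan is to reduce immediately to the \setfmla\ case already settled by \Cref{fact:setfmla-captured}, using the single sentence $\bigvee \D$ as a surrogate conclusion. So the first step is to check that the \setfmla\ argument $\sq{\G}{\bigvee \D}$ satisfies the four hypotheses of \Cref{fact:setfmla-captured}. Three of these are verbatim among our assumptions: $\alpha \neq \{1\}$, $\G$ is $\alpha$-satisfiable, and there is no $\gamma \in \G$ with $\sq{\gamma}{\bigvee \D}$ classically valid. The remaining one, that $\bigvee \D$ is not a classical tautology, is also assumed; I would just note in passing that this covers the degenerate case $\D = \emptyset$, where $\bigvee \D = \bot$, which is of course not a tautology. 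Hence \Cref{fact:setfmla-captured} applies and yields a probabilistic model $\modl_{\prob}$ that is an $\alpha$-preservation counterexample to $\sq{\G}{\bigvee \D}$; that is, $\prob(\gamma) \in \alpha$ for every $\gamma \in \G$ while $\prob(\bigvee \D) \notin \alpha$.

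The second step is to observe that this very same model is already an $\alpha$-preservation counterexample to the full \setset\ argument $\sq{\G}{\D}$. The premise condition of \Cref{defn:pres-con} is literally unchanged, since the premise set is the same $\G$. For the conclusion condition, I would fix an arbitrary $\delta \in \D$: then $\dn{\delta} \subseteq \dn{\bigvee \D}$, so monotonicity of $\prob$ (finite additivity plus non-negativity) gives $\prob(\delta) \leq \prob(\bigvee \D)$. Since $\alpha$ is an upset and $\prob(\bigvee \D) \notin \alpha$, we cannot have $\prob(\delta) \in \alpha$, for upward closure would then force $\prob(\bigvee \D) \in \alpha$. Thus $\prob(\delta) \notin \alpha$ for every $\delta \in \D$, so $\modl_{\prob}$ witnesses that $\sq{\G}{\D}$ is not $\alpha$-preservation valid.

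There is no real obstacle here beyond bookkeeping: the entire content is that replacing the conclusion multiset by its disjunction can only raise the probability on the conclusion side, and since $\alpha$ is upward closed, failing to land in $\alpha$ for the disjunction already forces failure for each disjunct individually. The only items that need a sentence of justification are the degenerate case $\D = \emptyset$ (dispatched by noting $\bot$ is not a tautology) and the monotonicity of $\prob$ under set inclusion on $\alg$, which follows from writing $\dn{\bigvee \D} = \dn{\delta} \cup (\dn{\bigvee\D} \setminus \dn{\delta})$ as a disjoint union of elements of $\alg$.
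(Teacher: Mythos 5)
Your proposal is correct and follows exactly the paper's route: apply \Cref{fact:setfmla-captured} to the \setfmla\ argument $\sq{\G}{\bigvee \D}$ and then note that the resulting counterexample also refutes $\sq{\G}{\D}$. The paper leaves the second step as a one-line observation, whereas you spell out the monotonicity-plus-upset reasoning (and the $\D = \emptyset$ edge case) explicitly; this is just added detail, not a different argument.
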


\begin{proof}
  From these assumptions, \Cref{fact:setfmla-captured} gives us that $\sq{\G}{\bigvee \D}$ is not $\alpha$-preservation valid.
  But any counterexample to $\sq{\G}{\bigvee \D}$ must at the same time be a counterexample to $\sq{\G}{\D}$.
\end{proof}

\Cref{fact:setfmla-captured} and \Cref{fact:setfmla-to-setset} are asymmetric in their assumptions, focusing on multiplicity among the \emph{premises} of an argument, and either assuming a single conclusion or else lumping all conclusions together with $\bigvee$.
As might be expected, these results can be dualized:

\begin{corollary} \label{fact:fmlaset-captured}
  If $\alpha \neq \hopen{0}{1}$, if $\D$ is not $\alpha$-tautologous, $\bigwedge \G$ is classically satisfiable, and if there is no $\delta \in \D$ such that $\sq{\bigwedge \G}{\delta}$ is classically valid, then $\sq{\G}{\D}$ is not $\alpha$-preservation valid.
\end{corollary}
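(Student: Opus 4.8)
The plan is to derive this purely by duality from \Cref{fact:setfmla-to-setset}, using the equivalence recorded in \Cref{fact:tautology-satisfiability-dual}. That fact tells us $\sq{\G}{\D}$ is $\alpha$-preservation valid iff $\sq{\neg \D}{\neg \G}$ is $\dalpha$-preservation valid, so it suffices to show that $\sq{\neg \D}{\neg \G}$ is \emph{not} $\dalpha$-preservation valid. I would do this by verifying that the triple consisting of the upset $\dalpha$, the premise set $\neg \D$, and the conclusion set $\neg \G$ meets the four hypotheses of \Cref{fact:setfmla-to-setset}.

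The four checks each come down to a single translation step. First, $\dalpha \neq \{1\}$: since $\dual{\hopen{0}{1}} = \{1\}$ and $\ddalpha = \alpha$, having $\dalpha = \{1\}$ would force $\alpha = \hopen{0}{1}$, against hypothesis. Second, $\neg \D$ is $\dalpha$-satisfiable: this is the contrapositive of the special case of \Cref{fact:tautology-satisfiability-dual}, which says $\D$ is $\alpha$-tautologous iff $\neg \D$ is $\dalpha$-unsatisfiable, so ``$\D$ not $\alpha$-tautologous'' is exactly ``$\neg\D$ $\dalpha$-satisfiable''. Third, $\bigvee \neg \G$ is not a classical tautology: it is classically equivalent to $\neg \bigwedge \G$, which is a tautology iff $\bigwedge \G$ is classically unsatisfiable, ruled out by the assumption that $\bigwedge \G$ is classically satisfiable. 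Fourth, there is no $\sigma \in \neg \D$ with $\sq{\sigma}{\bigvee \neg \G}$ classically valid: every such $\sigma$ is $\neg\delta$ for some $\delta \in \D$, and by classical contraposition $\sq{\neg\delta}{\bigvee \neg \G}$ is valid iff $\sq{\neg\delta}{\neg\bigwedge \G}$ is valid iff $\sq{\bigwedge \G}{\delta}$ is valid, which holds for no $\delta \in \D$ by hypothesis. With these in hand, \Cref{fact:setfmla-to-setset} gives that $\sq{\neg \D}{\neg \G}$ is not $\dalpha$-preservation valid, and \Cref{fact:tautology-satisfiability-dual} then delivers that $\sq{\G}{\D}$ is not $\alpha$-preservation valid.

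There is no genuine obstacle here; the only thing demanding care is the bookkeeping of the De Morgan and contraposition equivalences ($\bigvee\neg\G \equiv \neg\bigwedge\G$, and $\sq{\bigwedge\G}{\delta}$ classically valid iff $\sq{\neg\delta}{\neg\bigwedge\G}$ classically valid), together with correctly pairing ``$\D$ not $\alpha$-tautologous'' with ``$\neg\D$ $\dalpha$-satisfiable'' via the special case of \Cref{fact:tautology-satisfiability-dual}. One should also note that the degenerate cases $\G = \emptyset$ and $\D = \emptyset$ present no difficulty: under the usual conventions $\bigwedge\emptyset = \top$ and $\bigvee\emptyset = \bot$, all the equivalences above still go through (and $\bot$ is not a classical tautology, $\top$ is classically satisfiable), so the duality argument applies uniformly.
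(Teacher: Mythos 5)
Your proof is correct and follows essentially the same route as the paper's: reduce to showing $\sq{\neg\D}{\neg\G}$ is not $\dalpha$-preservation valid via \Cref{fact:tautology-satisfiability-dual}, then discharge the hypotheses of the single-conclusion invalidity result using the self-duality of classical logic and De Morgan. The only cosmetic difference is that you route through \Cref{fact:setfmla-to-setset} where the paper cites \Cref{fact:setfmla-captured} directly, and you spell out the four checks (including the degenerate cases) a bit more explicitly.
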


\begin{proof}
  Suppose the antecedent.
  Then by \Cref{fact:tautology-satisfiability-dual} we have that $\neg \D$ is $\dalpha$-satisfiable.
  Moreover, since classical logic is self-dual (in the sense that $\sq{\Sigma}{\Theta}$ is classically valid iff $\sq{\neg \Theta}{\neg \Sigma}$ is), we know that there is no $\G' \subseteq \neg \D$ such that $|\G'| < 2$ and $\sq{\G'}{\neg \bigwedge \G}$ is classically valid.
  By a De Morgan equivalence, that ensures that there is no such $\G'$ with $\sq{\G'}{\bigvee \neg \G}$ classically valid.
  Thus, by \Cref{fact:setfmla-captured}, we can conclude that $\sq{\neg \D}{\neg \G}$ is not $\dalpha$-preservation valid, and so by \Cref{fact:tautology-satisfiability-dual} we have that $\sq{\G}{\D}$ is not $\alpha$-preservation valid, as desired.
\end{proof}

We pause for a moment to give a couple of examples and take stock.
\Cref{fact:preservation-is-subclassical}, \Cref{fact:setfmla-to-setset} and \Cref{fact:fmlaset-captured} together establish that a wide range of arguments are $\alpha$-preservation invalid.

For example, consider the argument $\sq{p, q \lor r}{p \land q, r}$.
This is classically valid, so \Cref{fact:preservation-is-subclassical} does not apply.
However, neither $\sq{p}{(p \land q) \lor r}$ nor $\sq{q \lor r}{(p \land q) \lor r}$ is classically valid, and $(p \land q) \lor r$ is not a classical tautology.
Thus, by \Cref{fact:setfmla-to-setset}, if $\alpha \neq \{1\}$ and $\{p, q \lor r\}$ is $\alpha$-satisfiable, the argument is $\alpha$-preservation invalid.
But $\{p, q \lor r\}$ is $\alpha$-satisfiable for any upset $\alpha$.
So this argument is $\alpha$-preservation invalid for all $\alpha \neq \{1\}$.
(Moreover, since the argument is also supervaluationistically invalid, by \Cref{fact:1super} it is also $\{1\}$-preservation invalid.)

On the other hand, consider the argument $\sq{p, q}{p \land q, p \land \neg q}$.
This is classically valid, so \Cref{fact:preservation-is-subclassical} does not apply.
Moreover, $\sq{p}{(p \land q) \vee (p \land \neg q)}$ is classically valid, so \Cref{fact:setfmla-to-setset} does not apply.
And $\sq{p \land q}{p \land q}$ is classically valid, so \Cref{fact:fmlaset-captured} does not apply.
The argument is both supervaluationistically and subvaluationistically valid, so $\alpha$-preservation valid for the extreme cases of $\alpha = \{1\}$ and $\alpha = \hopen{0}{1}$.
But what about non-extreme choices of $\alpha$?
Our foregoing results do not apply to this case.

Nonetheless, the argument is still $\alpha$ invalid for all such.
To see this, choose some $x, y \not \in \alpha$ such that $x + y \in \alpha$, with the constraint that $x + 2y \leq 1$.
(Note that this is not possible for the extreme thresholds $\hopen{0}{1}$ and $\{1\}$, but is possible for all other thresholds.)
Then consider the model $\tuple{\{a, b, c, d\}, \wp(\{a, b, c, d\}), \dn{\;}, \prob}$ where:
\begin{itemize}
  \item $\dn{p} = \{a, b\}$ and $\dn{q} = \{a, c\}$; and
  \item $\prob(\{a\}) = x$ and $\prob(\{b\}) = \prob(\{c\}) = y$ and $\prob(\{d\}) = 1 - (x + 2y)$
\end{itemize}
Now note that $\prob(p) = \prob(q) = x + y$, but that $\prob(p \land q) = x$ and $\prob(p \land \neg q) = y$; so we have an $\alpha$-preservation counterexample to this argument.

Our sufficient conditions for $\alpha$-preservation invalidity, then, are genuinely only sufficient: there remain $\alpha$-preservation invalid arguments that fall outside the purview of \Cref{fact:preservation-is-subclassical}, \Cref{fact:setfmla-to-setset}, \Cref{fact:fmlaset-captured}.
We conjecture a stronger claim:

\begin{conjecture}\label{conj:setset-captured}
  If $\hopen{0}{1} \neq \alpha \neq \{1\}$, if $\G$ is $\alpha$-satisfiable and $\D$ is not $\alpha$-tautologous, and if there are no $\gamma \in \G, \delta \in \D$ such that $\sq{\gamma}{\delta}$ is classically valid, then $\sq{\G}{\D}$ is not $\alpha$-preservation valid.
\end{conjecture}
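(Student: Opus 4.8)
The plan is to peel off the easy cases, reduce to a finite linear feasibility problem, and then attack that by LP duality. First the reductions. If $\sq{\G}{\D}$ is classically invalid we are done by \Cref{fact:preservation-is-subclassical}, so assume it classically valid. We may delete from $\G$ every classical tautology and from $\D$ every classical contradiction: this changes neither the $\alpha$-satisfiability of $\G$, nor the $\alpha$-tautologousness of $\D$, nor the $\alpha$-preservation validity of $\sq{\G}{\D}$ (a tautology in $\G$ always has probability $1 \in \alpha$, a contradiction in $\D$ always has probability $0 \notin \alpha$), nor the hypothesis that no $\gamma \models \delta$. If $\G$ becomes empty, then since $\D$ is not $\alpha$-tautologous, $\sq{\emptyset}{\D}$ is $\alpha$-preservation invalid; symmetrically if $\D$ becomes empty we use that $\G$ is $\alpha$-satisfiable. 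So assume $\G, \D$ nonempty; the no-entailment hypothesis then forces every $\gamma \in \G$ classically satisfiable and every $\delta \in \D$ classically falsifiable. Finally, by \Cref{fact:power-set-model} we may work throughout in the finite model on $W = \wp(At(\G \cup \D))$, so that finding an $\alpha$-preservation counterexample becomes: find a probability vector $(p_w)_{w \in W}$ with $\sum_{w \models \gamma_i} p_w \in \alpha$ for all $i$ and $\sum_{w \models \delta_j} p_w \notin \alpha$ for all $j$.

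For the base cases, note that a singleton set of sentences is $\alpha$-tautologous iff its member is a classical tautology, and $\alpha$-satisfiable iff its member is classically satisfiable (using only $0 \notin \alpha$). Hence the case $|\D| = 1$ is exactly the hypothesis of \Cref{fact:setfmla-captured} (which needs $\alpha \neq \{1\}$), and $|\G| = 1$ is exactly the hypothesis of \Cref{fact:fmlaset-captured} (which needs $\alpha \neq \hopen{0}{1}$). So from now on $|\G|, |\D| \geq 2$, and we write $t = \inf \alpha$, noting $0 < t < 1$.

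For the main step I would argue the contrapositive via LP duality. Writing the feasibility problem as a linear program---minimize $s$ over probability vectors $p$ subject to $\sum_{w \models \gamma_i} p_w \geq t$ for all $i$ and $\sum_{w \models \delta_j} p_w \leq s$ for all $j$, handling the open/closed boundary at $t$ by a small perturbation---strong duality gives that $\sq{\G}{\D}$ is $\alpha$-preservation valid exactly when there exist weights $\nu_j \geq 0$ with $\sum_j \nu_j = 1$ and $\mu_i \geq 0$, with $M := \sum_i \mu_i$, such that at every world $w$
\[
\sum_{j \,:\, w \models \delta_j} \nu_j \;\geq\; t(1 - M) + \sum_{i \,:\, w \models \gamma_i} \mu_i .
\]
The task is then to show that no such dual certificate $(\nu, \mu)$ can exist under the three hypotheses. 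Each hypothesis contributes an arithmetic consequence: averaging the displayed inequality against a distribution $\prob_\G$ witnessing $\alpha$-satisfiability of $\G$ gives $\sum_j \nu_j \prob_\G(\delta_j) \geq t$; averaging it against a distribution $\prob_\D$ witnessing that $\D$ is not $\alpha$-tautologous gives $\sum_i \mu_i\bigl(t - \prob_\D(\gamma_i)\bigr) > 0$; and, since $\gamma_i \not\models \delta_j$ for each pair, instantiating the inequality at a world satisfying $\gamma_i$ but not $\delta_j$ gives $\mu_i + \nu_j \leq 1 - t + tM$ for all $i, j$. One would combine these---with $\max_j \nu_j \geq 1/|\D|$---into a numerical contradiction, presumably after a case split on the size of $M$: the third family alone already yields a contradiction when, say, $t < 1/|\G|$ and $t > (|\D| - 1)/|\D|$, so the work is to cover the remaining ranges of $t$ by bringing in the first two.

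The step I expect to be the genuine obstacle is precisely this last combination. The three hypotheses are each ``one-sided''---about $\G$ alone, about $\D$ alone, and about single $\gamma$--$\delta$ pairs---and it is not clear a priori that their consequences are jointly strong enough to kill every dual certificate; in particular $\prob_\G$ is unconstrained on the $\delta$'s and $\prob_\D$ on the $\gamma$'s, so the natural averaging bounds come out slack, and the constructions of \Cref{sec:sufficient-for-preservation-invalidity} avoid this difficulty only by collapsing conclusions with $\bigvee$ or premises with $\bigwedge$, which is unavailable here. An alternative line---inducting on $|\G| + |\D|$ by importing a negated conclusion $\neg\delta_k$ into the premises (legitimate when $t > \tfrac12$, since then $\prob(\neg\delta_k) \in \alpha$ already forces $\prob(\delta_k) \notin \alpha$), and dually importing a negated premise into the conclusions when $t < \tfrac12$---shrinks the multiplicity on one side but leaves its own residual cases: the self-dual threshold $t = \tfrac12$, and the situation where, for every choice of $k$, $\G \cup \{\neg\delta_k\}$ fails to be $\alpha$-satisfiable or $\neg\delta_k$ classically entails some $\delta_j$. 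Since the statement is offered only as a conjecture, I would expect a full proof to require a new ingredient---most plausibly a strengthened inductive hypothesis that propagates extra ``slack'' in the conclusion probabilities from one step to the next.
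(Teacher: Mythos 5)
The statement you are proving is not proved in the paper at all: it is stated as \Cref{conj:setset-captured}, and the authors explicitly say they ``have not been able to prove the conjecture'' and leave it open. So there is no paper proof to compare against, and the real question is whether your attempt closes the gap. It does not, and you say so yourself. Your reductions are sound and essentially reproduce the paper's partial results: classical invalidity is handled by \Cref{fact:preservation-is-subclassical}; stripping tautologies from $\G$ and contradictions from $\D$ is harmless; and the cases $|\D|=1$ and $|\G|=1$ are exactly \Cref{fact:setfmla-captured} and \Cref{fact:fmlaset-captured} (your observation that a singleton is $\alpha$-tautologous iff its member is a classical tautology is correct). Everything up to ``so from now on $|\G|,|\D|\geq 2$'' is fine.

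The genuine gap is the main step. The LP-duality reformulation is plausible but not actually established: the dual certificate condition is asserted rather than derived, the open/closed boundary is dispatched with an unexamined ``small perturbation'' (and the paper's \Cref{fact:distinct-iff-rational} shows the open/closed distinction has real content at rational thresholds, so this cannot be waved away), and---most importantly---the promised derivation of a numerical contradiction from the three averaged inequalities is never carried out except in the easy corner $t<1/|\G|$, $t>(|\D|-1)/|\D|$. Your own diagnosis of why the combination is hard (the three hypotheses are each one-sided, and the paper's constructions escape only by collapsing one side with $\bigvee$ or $\bigwedge$) is accurate; the paper's worked example $\sq{p,q}{p\land q,\, p\land\neg q}$, which evades all three sufficient conditions and requires a bespoke four-world model, illustrates exactly the kind of case your dual-certificate argument would need to cover uniformly. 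As it stands, the proposal is a reasonable research plan with an honest account of where it stalls, not a proof; the conjecture remains open both in the paper and after your attempt.
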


If \Cref{conj:setset-captured} is true, it is also comprehensive: every $\alpha$-preservation invalid argument would satisfy its assumptions (since failing any one of the assumptions is immediately sufficient for $\alpha$-preservation validity).
We know of no $\alpha$-preservation valid argument that provides a counterexample to \Cref{conj:setset-captured}, but we also have not been able to prove the conjecture.
For now, then, we leave things where they stand: with some broad sufficient conditions for invalidity in place.

\section{Properties of Preservation Consequence}
%Some other results about preservation validity}
\label{sec:pres-properties}

We can now state some general properties of $\alpha$-consequence relations: we start with an overview of relations between $\alpha$-preservation relations for different $\alpha$, then establish how many distinct consequence relations there are, and conclude with a note on operational and structural features of those relations.

\subsection{Containment relations}

With these results established, we are in a place to note some other interesting features of $\alpha$-preservation validity. The first is a slight strengthening of a result of \cite{paris2004deriving}.
The strengthening is just that we consider both closed and open upsets, while that paper considers (in effect) only closed.
Our proof of this result, however, is quite different, since we can appeal to the foregoing.

\begin{fact} \label{fact:full-narrowing-preserves-setfmla}
  If $\beta \subseteq \alpha$, then if $\sq{\G}{\phi}$ is $\alpha$-preservation valid, it is $\beta$-preservation valid.
\end{fact}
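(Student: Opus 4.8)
The plan is to avoid any direct model surgery and instead reduce the claim to the case analysis already packaged in \Cref{fact:setfmla-captured}, exploiting the fact that this is a \setfmla\ argument (single conclusion $\phi$). First I would dispose of the degenerate case $\alpha = \{1\}$: since every upset contains $1$, the hypothesis $\beta \subseteq \alpha = \{1\}$ forces $\beta = \{1\} = \alpha$, and the claim is trivial. So for the rest I would assume $\alpha \neq \{1\}$.

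Next, I would apply the contrapositive of \Cref{fact:setfmla-captured} to the $\alpha$-preservation valid argument $\sq{\G}{\phi}$. Since $\alpha \neq \{1\}$, this tells us that at least one of the following must hold: (i) $\G$ is not $\alpha$-satisfiable; (ii) $\phi$ is a classical tautology; (iii) there is some $\gamma \in \G$ with $\sq{\gamma}{\phi}$ classically valid. I would then show that each of these three conditions already forces $\sq{\G}{\phi}$ to be $\beta$-preservation valid, using only $\beta \subseteq \alpha$ and the fact that $\beta$ is an upset.

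For (i): if $\G$ were $\beta$-satisfiable, a witnessing model would also witness $\alpha$-satisfiability because $\beta \subseteq \alpha$; so $\G$ is not $\beta$-satisfiable either, i.e.\ $\sq{\G}{\emptyset}$ is $\beta$-preservation valid, and hence so is $\sq{\G}{\phi}$, since any $\beta$-preservation counterexample to the latter is also one to the former. For (ii): a classical tautology has probability $1$ on every probabilistic model, and $1 \in \beta$, so no model can send $\phi$ outside $\beta$, and validity is immediate. For (iii): classical validity of $\sq{\gamma}{\phi}$ gives $\dn{\gamma} \subseteq \dn{\phi}$ and hence $\prob(\gamma) \leq \prob(\phi)$ in every probabilistic model by monotonicity of probability; so whenever $\prob(\gamma) \in \beta$, the upset property of $\beta$ yields $\prob(\phi) \in \beta$, ruling out a counterexample.

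This proof has essentially no hard step: the real work was done in establishing \Cref{fact:setfmla-captured}, and here we merely read off its consequences. The only points requiring care are the edge case $\alpha = \{1\}$, where \Cref{fact:setfmla-captured} does not apply and must be handled by hand, and the few routine monotonicity observations invoked above — that probabilities respect set inclusion, that $\beta$-satisfiability implies $\alpha$-satisfiability when $\beta \subseteq \alpha$, and that adding a conclusion to a $\beta$-preservation valid \setfmla\ argument preserves validity.
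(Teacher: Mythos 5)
Your proof is correct and takes essentially the same route as the paper's: both reduce the claim to the contrapositive of \Cref{fact:setfmla-captured} and check that each of the three resulting disjuncts already forces $\beta$-preservation validity. Your explicit handling of the edge case $\alpha = \{1\}$ (where \Cref{fact:setfmla-captured} does not apply but $\beta \subseteq \alpha$ forces $\beta = \{1\}$) is in fact slightly more careful than the paper's own proof, which passes over that case silently.
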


\begin{proof}
  Suppose $\beta \subseteq \alpha$ and $\sq{\G}{\phi}$ is $\alpha$-preservation valid.
  Then by \Cref{fact:setfmla-captured}, either $\phi$ is a classical tautology, or there is some $\gamma \in \G$ such that $\sq{\gamma}{\phi}$ is classically valid, or $\G$ is $\alpha$-inconsistent.
  If any of these holds, however, then $\sq{\G}{\phi}$ is $\beta$-preservation valid as well.
  (For the third disjunct: if $\G$ is $\alpha$-inconsistent, then it must be $\beta$-inconsistent, and so $\sq{\G}{\phi}$ is $\beta$-preservation valid.)
\end{proof}

That is, as we narrow the upsets we consider, we strengthen the \setfmla\ fragment of the preservation consequence that results.
As you might expect, the situation is the reverse for \fmlaset\ arguments: the \fmlaset\ fragment of these preservation consequence relations gets stronger as the upset \emph{widens}.

\begin{fact} \label{fact:full-widening-preserves-fmlaset}
  If $\alpha \subseteq \beta$, then if $\sq{\phi}{\D}$ is $\alpha$-preservation valid, it is $\beta$-preservation valid.
\end{fact}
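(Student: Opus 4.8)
The plan is to reduce this to its \setfmla\ counterpart, Fact~\ref{fact:full-narrowing-preserves-setfmla}, via the duality correspondence of Fact~\ref{fact:tautology-satisfiability-dual}, following exactly the \setfmla/\fmlaset\ mirror-symmetry flagged in the surrounding text. The only preliminary observation I need is that the dual operation $\alpha \mapsto \dalpha$ reverses inclusion: if $\alpha \subseteq \beta$ then $\malpha \subseteq \mirror{\beta}$ (immediate from the definition of the mirror image, since $1 - x \in \alpha$ implies $1 - x \in \beta$), and hence, taking complements in $\clos{0}{1}$, $\dual{\beta} \subseteq \dalpha$. I also recall, from the remark after Definition~\ref{defn:mirror-dual}, that $\dalpha$ and $\dual{\beta}$ are again upsets, so that Fact~\ref{fact:full-narrowing-preserves-setfmla} will be applicable to them.

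With that in hand, here is the argument. Suppose $\alpha \subseteq \beta$ and $\sq{\phi}{\D}$ is $\alpha$-preservation valid. By Fact~\ref{fact:tautology-satisfiability-dual}, applied to the upset $\alpha$ and the argument $\sq{\phi}{\D}$, the \setfmla\ argument $\sq{\neg \D}{\neg \phi}$ is $\dalpha$-preservation valid. Since $\dual{\beta} \subseteq \dalpha$, Fact~\ref{fact:full-narrowing-preserves-setfmla} yields that $\sq{\neg \D}{\neg \phi}$ is $\dual{\beta}$-preservation valid. Finally, reading the biconditional of Fact~\ref{fact:tautology-satisfiability-dual} in the other direction — this time applied to the upset $\beta$ and the argument $\sq{\phi}{\D}$, whose associated dual argument is precisely $\sq{\neg\D}{\neg\phi}$ with upset $\dual{\beta}$ — we conclude that $\sq{\phi}{\D}$ is $\beta$-preservation valid, as required.

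There is no real obstacle in this route; the one thing to get right is the direction of the containment under dualization, which is exactly why the hypothesis flips from $\beta \subseteq \alpha$ in Fact~\ref{fact:full-narrowing-preserves-setfmla} to $\alpha \subseteq \beta$ here. If a self-contained argument paralleling the proof of Fact~\ref{fact:full-narrowing-preserves-setfmla} were preferred, one could instead invoke Fact~\ref{fact:fmlaset-captured} directly in the \fmlaset\ instance $\G = \{\phi\}$: $\alpha$-preservation validity of $\sq{\phi}{\D}$ forces that $\alpha = \hopen{0}{1}$, or $\D$ is $\alpha$-tautologous, or $\phi$ is classically unsatisfiable, or some $\delta \in \D$ has $\sq{\phi}{\delta}$ classically valid; and each of these is inherited by any wider upset $\beta \supseteq \alpha$ (in the $\hopen{0}{1}$ case we get $\beta = \hopen{0}{1}$ as well, since that is the largest upset; $\alpha \subseteq \beta$ turns $\alpha$-tautologous into $\beta$-tautologous; and the remaining two conditions do not mention the upset at all). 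I would present the duality version as the main proof, since it is shorter and reuses machinery already set up.
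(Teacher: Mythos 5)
Your proof is correct, but your primary route differs from the one the paper takes. The paper argues directly from the contrapositive of \Cref{fact:fmlaset-captured}: $\alpha$-preservation validity of $\sq{\phi}{\D}$ forces one of the three conditions ($\phi$ a classical contradiction, some $\sq{\phi}{\delta}$ classically valid, or $\D$ $\alpha$-tautologous), each of which transfers to any wider $\beta$ --- this is exactly your second, ``self-contained'' sketch, so in effect you have reproduced the paper's proof as your backup. Your main proof instead routes through \Cref{fact:tautology-satisfiability-dual} and \Cref{fact:full-narrowing-preserves-setfmla}, and it is sound: the containment reversal $\alpha \subseteq \beta \Rightarrow \dual{\beta} \subseteq \dalpha$ is verified correctly, the dualized argument $\sq{\neg\D}{\neg\phi}$ is indeed a \setfmla\ argument so the cited fact applies, and the two applications of the duality biconditional are in the right directions. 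The duality version buys economy (it reuses machinery and makes the mirror-symmetry between \Cref{fact:full-narrowing-preserves-setfmla} and this fact explicit rather than implicit), at the cost of stacking two prior results whose proofs themselves already pass through the same duality; the paper's direct version is one layer shallower and mirrors the proof of \Cref{fact:full-narrowing-preserves-setfmla} structurally rather than deriving one from the other. One small point in your favour: you explicitly dispatch the $\alpha = \hopen{0}{1}$ disjunct in the contrapositive of \Cref{fact:fmlaset-captured} (noting that $\hopen{0}{1}$ is the largest upset, so $\beta = \alpha$ there), a case the paper's own proof silently omits.
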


\begin{proof}
  Suppose $\alpha \subseteq \beta$ and $\sq{\phi}{\D}$ is $\alpha$-preservation valid.
  Then by \Cref{fact:fmlaset-captured}, either $\phi$ is a classical contradiction, or there is some $\delta \in \D$ such that $\sq{\phi}{\delta}$ is classically valid, or $\D$ is $\alpha$-tautologous.
  If any of these holds, however, then $\sq{\phi}{\D}$ is $\beta$-preservation valid as well.
\end{proof}

\Cref{fact:full-narrowing-preserves-setfmla,fact:full-widening-preserves-fmlaset} together give us an interesting overview of these preservation consequences: in their $\setfmla$ fragment, they get stronger as the upset narrows, but in their $\fmlaset$ fragment, they get stronger as the upset widens instead.
As we saw in \Cref{sec:super-subv}, the limits are the familiar relations of supervaluational (at $\{1\}$) and subvaluational (at $\hopen{0}{1}$) consequence.
This also means that every preservation consequence relation agrees with every other on \fmlafmla\ arguments, regardless of which upset is chosen, since \fmlafmla\ arguments are both \setfmla\ and \fmlaset (and so every preservation consequence relation is classical on \fmlafmla\ arguments).

\Cref{fact:full-narrowing-preserves-setfmla} also allows us to connect our results here to a fascinating result of \cite{knight2003probabilistic}, connecting the \setfmla\ fragment of subvaluationistic logic to a related framework.
Note that by \Cref{fact:full-narrowing-preserves-setfmla} and \Cref{fact:01sub}, we know that the \emph{intersection} of all the $\alpha$-preservation consequence relations has as its \setfmla\ fragment the \setfmla\ fragment of subvaluationistic logic.
\cite{knight2003probabilistic} shows, in effect, that the intersection of a much wider class of \setfmla\ probabilistic consequence relations also matches subvaluationistic logic.

As for the more general \setset\ situation, these neat ordering facts do not obtain.
For example, consider again the extremes of $\{1\}$-preservation and $\hopen{0}{1}$-preservation.
As mentioned before, $\sq{p, q}{p \land q}$ is valid in supervaluationist logic but not subvaluationist logic, while $\sq{p \lor q}{p, q}$ is valid in subvaluationist logic but not supervaluationist logic.

\subsection{Counting preservation relations}
%Identity and distinctness}
\label{sec:how-many-consequence}

The purpose of this section is to show that there are continuum many distinct $\alpha$-preservation consequence relations, and to consider the relationship between the closed and open upsets at any particular threshold.

First, we show that we can get our hands on enough sentences with the right logical properties for our purposes:

\begin{fact} \label{fact:pairwise-inconsistent}
  For every natural number $m$, there is a set of $m$ sentences such that each of them is classically consistent and any pair of them is classically inconsistent.
\end{fact}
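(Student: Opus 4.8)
The plan is to construct the desired family of sentences explicitly from a fresh batch of atomic sentences, using state-description-like conjunctions. First I would pick $m$ distinct atoms $p_1, \ldots, p_m$ from our countable supply (this is where we use the fact that $\lang$ has infinitely many atoms: we need at least $m$ of them, and in fact we only need $m$). For each $i$ with $1 \leq i \leq m$, define $\phi_i$ to be the conjunction $\bigwedge_{1 \leq j \leq m} \pm_i p_j$, where $\pm_i p_j$ is $p_j$ if $j = i$ and $\neg p_j$ otherwise — so $\phi_i$ says ``exactly $p_i$ holds among $p_1, \ldots, p_m$''. Each $\phi_i$ is a satisfiable conjunction of distinct literals over distinct atoms, hence classically consistent.

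Next I would check pairwise inconsistency: if $i \neq k$, then $\phi_i$ has $p_i$ as a conjunct while $\phi_k$ has $\neg p_i$ as a conjunct, so $\phi_i \land \phi_k$ contains both $p_i$ and $\neg p_i$ and is therefore classically inconsistent, i.e., $\phi_i, \phi_k \vdash_{CL} \bot$. That gives exactly the two required properties: $m$ sentences, each consistent, any pair inconsistent. One should also note the degenerate small cases: for $m = 0$ the empty set works vacuously, and for $m = 1$ a single consistent sentence (say $p_1$, or even the one-conjunct version of the above) works, since the ``any pair'' condition is vacuous; but the uniform construction above already covers these, reading an empty conjunction as $\top$ when $m = 0$.

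There is essentially no real obstacle here — this is a routine explicit construction — so I do not anticipate a hard step. The only things to be mildly careful about are (i) making sure the atoms chosen are genuinely distinct, which is legitimate since $\lang$ has countably infinitely many atoms, and (ii) handling the edge cases $m = 0, 1$ cleanly, which as noted fall out of the same formula. If one prefers a slicker phrasing, one could instead take $\phi_i = p_i \land \bigwedge_{j \neq i} \neg p_j$ directly; the verification is identical. I would write the proof in the explicit form above, since it makes both the consistency of each $\phi_i$ and the inconsistency of each pair completely transparent.
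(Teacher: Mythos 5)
Your construction is correct and is essentially the paper's own approach: both build the sentences as conjunctions of literals (state descriptions), which are individually consistent and pairwise inconsistent because any two differ on some atom. The only cosmetic difference is that the paper uses $n$ atoms with $2^n \geq m$ and selects any $m$ of the $2^n$ state descriptions, whereas you use $m$ atoms and the $m$ state descriptions with exactly one positive conjunct; this is logarithmically less economical in atoms but immaterial given the countable supply.
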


\begin{proof}
  Take $n$ atomic sentences, where $n$ is such that $2^{n} \geq m$; let these be $\{p_{1}, \ldots, p_{n}\}$.
  There are $2^{n}$ sentences of the form $\pm p_{1} \land \ldots \ldots \pm p_{n}$, where $+p_{i}$ is $p_{i}$ and $- p_{1}$ is $\neg p_{i}$, and these are pairwise inconsistent, since any two differ on at least one $p_{i}$.
  They are also all individually consistent.
  So take any $m$ of them.
\end{proof}

With that in hand, we move to a lemma that allows us to draw a divide between the closed and open upsets at any rational number:

\begin{lemma} \label{fact:every-rational-maxsat}
  For every rational number $\frac{n}{m} \in (0, 1)$, there is a set $\G$ that is $\clos{\frac{n}{m}}{1}$-satisfiable and not $\hopen{\frac{n}{m}}{1}$-satisfiable.\footnote{Many thanks to Peter Fritz for helping to find this proof.}
\end{lemma}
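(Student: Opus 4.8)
The plan is to realize $\Gamma$ as a cyclically symmetric family of disjunctions of pairwise-inconsistent sentences, so that a simple double-counting argument pins the largest achievable probability at exactly $\frac{n}{m}$. First note that $\frac{n}{m} \in (0,1)$ forces $1 \le n \le m-1$. Using \Cref{fact:pairwise-inconsistent}, I would fix sentences $c_1, \ldots, c_m$, each classically consistent but pairwise classically inconsistent. Reading subscripts modulo $m$ (with representatives $1, \ldots, m$, so that $c_{m+1}$ means $c_1$, and so on), for each $j \in \{1, \ldots, m\}$ set $\gamma_j := c_j \lor c_{j+1} \lor \cdots \lor c_{j+n-1}$, a ``cyclic window'' of $n$ consecutive cells, and let $\Gamma := \{\gamma_1, \ldots, \gamma_m\}$.

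To show $\Gamma$ is $\clos{\frac{n}{m}}{1}$-satisfiable, I would apply \Cref{fact:power-set-model} to $\{c_1, \ldots, c_m\}$ to obtain a finite model whose worlds are the state descriptions over the occurring atoms, and assign probability $\frac{1}{m}$ to each of the $m$ worlds realizing one of $c_1, \ldots, c_m$ and $0$ to the rest. Since each $c_i$ is a state description, $\dn{c_i}$ is a singleton, and pairwise inconsistency makes these $m$ singletons distinct, so $\prob(c_i) = \frac{1}{m}$ for every $i$. As $\gamma_j$ is a disjunction of $n$ pairwise-incompatible cells, $\dn{\gamma_j}$ is a disjoint union of $n$ of these singletons, and finite additivity gives $\prob(\gamma_j) = \frac{n}{m} \in \clos{\frac{n}{m}}{1}$.

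To show $\Gamma$ is not $\hopen{\frac{n}{m}}{1}$-satisfiable, I would argue by contradiction: suppose some probabilistic model has $\prob(\gamma_j) > \frac{n}{m}$ for every $j$. Put $x_i := \prob(c_i)$. Pairwise inconsistency makes $\dn{c_1}, \ldots, \dn{c_m}$ pairwise disjoint, so $\sum_{i=1}^{m} x_i = \prob(c_1 \lor \cdots \lor c_m) \le 1$; likewise each $\prob(\gamma_j)$ is the sum of the $n$ values $x_i$ appearing in its window. Summing over $j$ and using that each index $i$ lies in exactly $n$ of the $m$ windows, $\sum_{j=1}^{m} \prob(\gamma_j) = n \sum_{i=1}^{m} x_i \le n$. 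But the assumption gives $\sum_{j=1}^{m} \prob(\gamma_j) > m \cdot \frac{n}{m} = n$, a contradiction.

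I do not expect a serious obstacle; the construction is short and the analysis elementary. The one idea that matters is the choice of the cyclic-window family, which is simultaneously $n$-uniform (each $\gamma_j$ a disjunction of exactly $n$ cells, so the uniform model attains $\frac{n}{m}$ exactly) and $n$-regular (each cell used in exactly $n$ windows, so the double count caps $\sum_j \prob(\gamma_j)$ at $n \sum_i x_i \le n$). The points needing care are the bookkeeping of subscripts modulo $m$ and the degenerate case $n = 1$, where $\gamma_j = c_j$ and the claim collapses to the familiar fact that $m$ pairwise-incompatible sentences cannot all have probability above $\frac{1}{m}$ though they can all equal it.
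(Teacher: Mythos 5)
Your proof is correct. It shares the paper's starting point (via \Cref{fact:pairwise-inconsistent}, take $m$ pairwise-inconsistent, individually consistent cells and form $n$-fold disjunctions of them, with the uniform $\frac{1}{m}$ model witnessing $\clos{\frac{n}{m}}{1}$-satisfiability), but diverges in the choice of witness set and in the impossibility argument. The paper takes $\G$ to be \emph{all} $\binom{m}{n}$ disjunctions of $n$-element subsets and rules out $\hopen{\frac{n}{m}}{1}$-satisfiability by a pigeonhole argument: at most $n-1$ cells can have probability $\le \frac{1}{m}$, so one can peel off $m-n$ cells of probability $> \frac{1}{m}$ and force the remaining $n$-ary sum below $\frac{n}{m}$. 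You instead take only the $m$ cyclic windows $c_j \lor \cdots \lor c_{j+n-1}$ and exploit their $n$-regularity: $\sum_{j=1}^{m}\prob(\gamma_j) = n\sum_{i=1}^{m}\prob(c_i) \le n$, while the supposition that every $\prob(\gamma_j) > \frac{n}{m}$ forces this sum above $n$. Your version buys a smaller witness set ($m$ sentences rather than $\binom{m}{n}$, and since a subset's unsatisfiability is the stronger claim, this slightly strengthens the lemma) and a one-line double count in place of the case-splitting; the paper's version avoids any modular-index bookkeeping and makes the ``all $n$-ary sums'' structure explicit, which is the form it reuses rhetorically when discussing the probabilities realized by $\G$. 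Both establish exactly what \Cref{fact:every-rational-maxsat} claims.
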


\begin{proof}
   By \Cref{fact:pairwise-inconsistent}, there is a set of $m$ sentences that are individually consistent and pairwise inconsistent; let these be $\phi_{1}, \ldots, \phi_{m}$.
   Our target $\G$ is $\abst{\phi_{i_{1}} \lor \ldots \lor \phi_{i_{n}}}{1 \leq i_{1} < \ldots < i_{n} \leq m}$.

   First, to see that $\G$ is $\clos{\frac{n}{m}}{1}$-satisfiable.
   Consider the model $\tuple{W, \alg, \dn{\;}, \prob}$, where $W = \{w_{1}, \ldots, w_{m}\}$; where $\alg = \wp(W)$; where $\dn{\;}$ is such that $\dn{\phi_{i}} = \{w_{i}\}$, which is possible since each $\phi_{i}$ is consistent and none classically entails any other; and where $\prob(\{w_{i}\}) = \frac{1}{m}$.
   In this model, $\prob(\gamma) = \frac{n}{m}$ for each $\gamma \in \G$.

   Now, to see that $\G$ is not $\hopen{\frac{n}{m}}{1}$-satisfiable.
   Suppose otherwise; then we have a probability distribution $\prob$ such that $\prob(\gamma) > \frac{n}{m}$ for each $\gamma \in \G$.
   Note that since each $\gamma \in \G$ is an $n$-ary disjunction of pairwise incompatible $\phi_{i}$s, and since each choice of $n$ $\phi_{i}$s is disjoined in some such $\gamma$, the probabilities of the sentences in $\G$ are exactly the $n$-ary sums whose addends are drawn (without replacement) from the $\prob(\phi_{i})$s.

   Since by supposition all such $n$-ary sums are $> \frac{n}{m}$, there can be at most $n - 1$ of the $\phi_{i}$s such that $\prob(\phi_{i}) \leq \frac{1}{m}$.
   So there are at least $m - n + 1$ of the $\phi_{i}$ that are such that $\prob(\phi_{i}) > \frac{1}{m}$; choose $m - n$ of these.
   For concreteness, but without loss of generality, we suppose that these are $\phi_{1}, \ldots, \phi_{m - n}$.

   Now, $\prob(\phi_{1}) + \ldots + \prob(\phi_{m - n}) > \frac{m - n}{m}$, and so $\prob(\phi_{m - n + 1}) + \ldots + \prob(\phi_{m}) < \frac{n}{m}$, since the sum of these two sums cannot be greater than $1$.
   But the latter sum is an $n$-ary sum whose addends are drawn without replacement from the $\prob(\phi_{i})$.
   By assumption all such sums are $> \frac{n}{m}$, so we have a contradiction.
\end{proof}

We also note a result of \cite{knight2002measuring} about $\alpha$-satisfiability; we use both this and \Cref{fact:every-rational-maxsat} both here and later in \Cref{sec:symm-cons}:

\begin{theorem}[\protect{\citealp[Thm.\ 4.14, p.\ 86]{knight2002measuring}}]\label{knight-thm414}
  For any finite set $\G$ of sentences, there is some maximum $x$ such that $\G$ is $\clos{x}{1}$-satisfiable, and such maximum $x$ is rational.
\end{theorem}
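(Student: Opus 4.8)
The plan is to recast $\clos{x}{1}$-satisfiability as a linear optimization problem over a fixed finite-dimensional simplex, and then extract both the attainment of the maximum and its rationality from standard facts. The first move is to invoke \Cref{fact:power-set-model} to replace the (infinite-dimensional) space of all probability distributions by a single compact parameter space. Let $W = \wp(At(\G))$ with worlds enumerated $w_{1}, \ldots, w_{N}$ where $N = 2^{|At(\G)|}$. For any probability distribution $\prob$, \Cref{fact:power-set-model} supplies a probabilistic model on $W$ agreeing with $\prob$ on every sentence of $\G$; conversely, any assignment $t_{j} = \prob(\{w_{j}\})$ with $t_{j} \geq 0$ and $\sum_{j} t_{j} = 1$ (together with the denotation function of the power-set model) is itself a probabilistic model. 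Since the quantity of interest, $\min_{\gamma \in \G} \prob(\gamma)$, depends only on the probabilities assigned to sentences in $\G$, the set of realizable tuples $(\prob(\gamma))_{\gamma \in \G}$ is exactly the set realized by points $t$ of the standard simplex $\Delta = \{t \in \mathbb{R}^{N} : t_{j} \geq 0,\ \sum_{j} t_{j} = 1\}$, via $\prob(\gamma) = \sum_{w_{j} \in \dn{\gamma}} t_{j}$, a linear function of $t$ with $\{0,1\}$ coefficients. This reduction is the one point to handle with care; everything after it is routine.

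Next comes existence of the maximum. Set $f(t) = \min_{\gamma \in \G} \sum_{w_{j} \in \dn{\gamma}} t_{j}$ on $\Delta$ (with the convention $f \equiv 1$ in the degenerate case $\G = \emptyset$). This $f$ is continuous, being a minimum of finitely many linear functions, and $\Delta$ is nonempty and compact, so $f$ attains a maximum value $x^{\ast} \in [0,1]$ at some $t^{\ast} \in \Delta$. The distribution corresponding to $t^{\ast}$ witnesses that $\G$ is $\clos{x^{\ast}}{1}$-satisfiable; and for any $y > x^{\ast}$, no probability distribution can give every $\gamma \in \G$ probability $\geq y$, since by the reduction above any such distribution would yield a point $t \in \Delta$ with $f(t) \geq y > x^{\ast}$, contradicting the maximality of $x^{\ast}$. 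Hence $x^{\ast}$ is precisely the desired maximum.

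Finally, rationality. Introduce a fresh variable $x$ and consider the polytope $P$ of all $(x, t) \in \mathbb{R}^{N+1}$ satisfying $0 \leq x \leq 1$, $x \leq \sum_{w_{j} \in \dn{\gamma}} t_{j}$ for each $\gamma \in \G$, $t_{j} \geq 0$ for each $j$, and $\sum_{j} t_{j} = 1$. This is a nonempty bounded polytope all of whose defining (in)equalities have integer coefficients, and maximizing the linear functional $(x,t) \mapsto x$ over $P$ attains its maximum—which is exactly $x^{\ast}$—at a vertex of $P$. Vertices of such a polytope have rational coordinates, since each solves a square linear system with integer coefficients and is therefore rational by Cramer's rule (equivalently, one may appeal to LP duality, whose optimal value for a rational program is rational). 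Thus $x^{\ast} \in \mathbb{Q}$, as claimed. The only real work is the first step; attainment is pure compactness and rationality is pure linear-programming bookkeeping.
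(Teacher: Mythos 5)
Your proposal is correct, but it cannot be compared to a proof in the paper, because the paper offers none: for this theorem the authors write only ``See the cited work,'' deferring entirely to Knight's Theorem 4.14. What you have supplied is a self-contained argument where the paper has a citation, and it is a sound one. The reduction via \Cref{fact:power-set-model} to the simplex on $\wp(At(\G))$ is exactly the right move, and you are right to flag it as the one step needing care: $\clos{x}{1}$-satisfiability is defined by quantifying over all probabilistic models, and you need both that every model induces a probability distribution realizable on the finite power-set model (one direction of \Cref{fact:power-set-model}) and that every point of the simplex conversely yields a legitimate model (which the paper itself notes follows from finite additivity on a finite power-set algebra). With that equivalence in hand, attainment of the maximum is compactness of the simplex together with continuity of a minimum of finitely many linear forms, and rationality follows because the optimum of the rational linear program $\max x$ subject to $x \leq \sum_{w_j \in \dn{\gamma}} t_j$, $t \in \Delta$, is achieved at a vertex of a polytope cut out by integer-coefficient constraints, hence at a rational point. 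This linear-programming route is in the same spirit as Knight's own treatment of $\eta$-consistency, so your argument is best viewed as reconstructing the omitted proof rather than diverging from it; its value here is that it makes the paper's appeal to the literature self-contained at essentially no extra cost.
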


\begin{proof}
 See the cited work.
\end{proof}

\noindent This then gives a quick corollary, which is the form we'll use it in:

\begin{corollary} \label{fact:sat-rational}
  For any $x \in (0, 1)$, if there is a finite set $\G$ such that $\G$ is $\clos{x}{1}$-satisfiable and $\hopen{x}{1}$-unsatisfiable, then $x$ is rational.
\end{corollary}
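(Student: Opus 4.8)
The plan is to reduce this immediately to \Cref{knight-thm414}. The crux is that, given the two hypotheses, $x$ must in fact \emph{be} the maximum value $x'$ for which $\G$ is $\clos{x'}{1}$-satisfiable; \Cref{knight-thm414} then tells us any such maximum is rational, which is exactly what we want.

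First I would observe that, by the first hypothesis, $x$ is at least \emph{one} value for which $\G$ is $\clos{x}{1}$-satisfiable. Then I would argue it is in fact the largest such value. Suppose not; then there is some $x'' > x$ with $\G$ being $\clos{x''}{1}$-satisfiable, witnessed by a probabilistic model $\modl_{\prob}$ with $\prob(\gamma) \in \clos{x''}{1}$ for every $\gamma \in \G$. But since $x'' > x$, every $y \geq x''$ satisfies $y > x$, so $\clos{x''}{1} \subseteq \hopen{x}{1}$; hence that same model witnesses that $\G$ is $\hopen{x}{1}$-satisfiable, contradicting the second hypothesis. Therefore $x$ coincides with the maximum value $x'$ for which $\G$ is $\clos{x'}{1}$-satisfiable, and by \Cref{knight-thm414} this maximum is rational, so $x$ is rational.

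There is essentially no obstacle here: the only point to verify is the set inclusion $\clos{x''}{1} \subseteq \hopen{x}{1}$ when $x'' > x$, which is routine, and the observation that $\G$ being $\alpha$-satisfiable for $\alpha \subseteq \alpha'$ implies it is $\alpha'$-satisfiable (used with $\alpha = \clos{x''}{1}$ and $\alpha' = \hopen{x}{1}$), which is immediate from the definition of $\alpha$-satisfiability. All the genuine content is packed into \Cref{knight-thm414}, which we take as given; this corollary is just the convenient repackaging of it that we will invoke elsewhere.
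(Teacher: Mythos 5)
Your proof is correct and is exactly the argument the paper has in mind: the paper simply declares the corollary ``Immediate from \Cref{knight-thm414},'' and your write-up just makes explicit the (routine) step that the hypotheses force $x$ to be the maximum threshold of $\clos{\cdot}{1}$-satisfiability for $\G$. No issues.
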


\begin{proof}
 Immediate from \Cref{knight-thm414}.
\end{proof}

This is now enough for us to summarize the situation surrounding any $\clos{x}{1}$- and $\hopen{x}{1}$-preservation consequence.
For purposes of this discussion, we treat these consequence relations simply as sets of valid arguments; we call them \demph{distinct} when they are distinct as sets and \demph{incomparable} when neither set is a subset of the other.

\begin{theorem} \label{fact:distinct-iff-rational}
  For any $x \in (0, 1)$, if the consequence relations of $\clos{x}{1}$-preservation and $\hopen{x}{1}$-preservation are distinct, then $x$ is rational; and if $x$ is rational, then the consequence relations of $\clos{x}{1}$-preservation and $\hopen{x}{1}$-preservation are incomparable.
\end{theorem}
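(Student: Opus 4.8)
The plan is to prove the two implications separately.

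\medskip
\noindent\textbf{``Distinct $\Rightarrow$ rational''.} I will establish the contrapositive: if $x$ is irrational then $\clos{x}{1}$-preservation and $\hopen{x}{1}$-preservation validity coincide on every argument. The workhorse is \Cref{fact:sat-rational}, whose contrapositive says that for irrational $x$ every finite $\clos{x}{1}$-satisfiable set is already $\hopen{x}{1}$-satisfiable; the bridge from satisfiability to preservation will be a convex-combination argument. Suppose $\sq{\G}{\D}$ has a $\clos{x}{1}$-preservation counterexample $\prob$, so $\prob(\gamma)\geq x$ for every $\gamma\in\G$ and $\prob(\delta)<x$ for every $\delta\in\D$. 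Since $\G$ is then $\clos{x}{1}$-satisfiable it is $\hopen{x}{1}$-satisfiable; fix $\prob_{0}$ with $\prob_{0}(\gamma)>x$ for all $\gamma$, and consider $\prob_{\lambda}=(1-\lambda)\prob+\lambda\prob_{0}$, which is again a probability distribution (the distribution axioms are affine) and so, via \Cref{fact:power-set-model} applied to $\G\cup\D$, underlies a genuine probabilistic model. For every $\lambda\in(0,1]$ one gets $\prob_{\lambda}(\gamma)>x$, and --- because $\D$ is finite and each $\prob(\delta)$ is \emph{strictly} below $x$ --- for all sufficiently small $\lambda$ one still has $\prob_{\lambda}(\delta)<x$; such a $\prob_{\lambda}$ is a $\hopen{x}{1}$-preservation counterexample. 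For the reverse inclusion I will run the mirror-image argument: from a $\hopen{x}{1}$-preservation counterexample $\prob$ (with $\prob(\gamma)>x$, $\prob(\delta)\leq x$) note that $\neg\D$ is $\clos{1-x}{1}$-satisfiable, hence --- $1-x$ being irrational too --- $\hopen{1-x}{1}$-satisfiable, giving $\prob_{0}$ with $\prob_{0}(\delta)<x$ for all $\delta$; the same mixing trick then pushes all conclusions strictly below $x$ while keeping the premises weakly above $x$, yielding a $\clos{x}{1}$-preservation counterexample. Both directions are genuinely needed, since the inclusion $\hopen{x}{1}\subseteq\clos{x}{1}$ does not by itself relate the two notions of counterexample.

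\medskip
\noindent\textbf{``Rational $\Rightarrow$ incomparable''.} Write $x=\frac{n}{m}$ and produce, on each side, a valid argument that the other side counts invalid. On one side, \Cref{fact:every-rational-maxsat} (at $x$) gives a finite $\G$ that is $\clos{x}{1}$-satisfiable but not $\hopen{x}{1}$-satisfiable; since $\sq{\G}{\emptyset}$ is $\alpha$-preservation valid precisely when $\G$ is $\alpha$-unsatisfiable (the remark following \Cref{defn:sat-taut}), $\sq{\G}{\emptyset}$ is $\hopen{x}{1}$-preservation valid but not $\clos{x}{1}$-preservation valid. On the other side I will use duality: \Cref{fact:every-rational-maxsat} applied at the rational $1-x\in(0,1)$ gives a finite $\Sigma$ that is $\clos{1-x}{1}$-satisfiable but not $\hopen{1-x}{1}$-satisfiable; put $\D=\{\neg\sigma : \sigma\in\Sigma\}$. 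Then $\sq{\emptyset}{\D}$ is $\alpha$-preservation valid iff $\D$ is $\alpha$-tautologous iff (by the special case of \Cref{fact:tautology-satisfiability-dual}, using $\prob(\neg\neg\sigma)=\prob(\sigma)$) $\Sigma$ is $\dalpha$-unsatisfiable; and since $\dual{\clos{x}{1}}=\hopen{1-x}{1}$ while $\dual{\hopen{x}{1}}=\clos{1-x}{1}$, this makes $\sq{\emptyset}{\D}$ $\clos{x}{1}$-preservation valid but not $\hopen{x}{1}$-preservation valid. Witnesses on both sides show that neither relation's set of valid arguments includes the other, i.e.\ the two relations are incomparable. (A direct, duality-free construction also works for this second witness: take pairwise-inconsistent, jointly exhaustive, individually consistent $\phi_{1},\dots,\phi_{m}$ and let $\D$ be the set of all $n$-ary disjunctions of them; every model gives some member of $\D$ probability $\geq \frac nm$, but the uniform model gives each member exactly $\frac nm$.)

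\medskip
I expect the main obstacle to be the first implication --- specifically, lifting \Cref{fact:sat-rational}, which only concerns $\alpha$-satisfiability, to a statement about full $\alpha$-preservation validity. The convex-combination move is the crux, and the two points that need care are that a convex mixture of probability distributions is still a probability distribution (so \Cref{fact:power-set-model} applies), and that the finiteness of $\G$ and $\D$ lets the ``for small enough $\lambda$'' choices be made uniformly over all premises and conclusions at once. The second implication is comparatively routine, once one notices that empty-conclusion and empty-premise arguments reduce preservation validity to unsatisfiability and tautologousness, so that \Cref{fact:every-rational-maxsat} and \Cref{fact:tautology-satisfiability-dual} do the real work.
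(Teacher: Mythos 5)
Your proposal is correct, and its second half (``rational $\Rightarrow$ incomparable'') is essentially the paper's own proof: the witness $\sq{\G}{\emptyset}$ from \Cref{fact:every-rational-maxsat} at $x$, and the dual witness $\sq{\emptyset}{\neg\Sigma}$ obtained from \Cref{fact:every-rational-maxsat} at $1-x$ via \Cref{fact:tautology-satisfiability-dual}. The first half, however, takes a genuinely different route. The paper argues directly from distinctness: it takes a discriminating argument and pushes it through \Cref{fact:setfmla-to-setset} (or, for the other orientation, \Cref{fact:fmlaset-captured}) to extract a set that is $\clos{x}{1}$-satisfiable but not $\hopen{x}{1}$-satisfiable, then invokes \Cref{fact:sat-rational}; this leans on the classification machinery built from Adams's Theorem and the minimally-sufficient-set apparatus of \Cref{sec:backgr-from-adams-levine}. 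You instead prove the contrapositive by a direct perturbation: mix a $\clos{x}{1}$-counterexample with a witness to the $\hopen{x}{1}$-satisfiability of $\G$ (guaranteed by the contrapositive of \Cref{fact:sat-rational} for irrational $x$), using finiteness of $\G$ and $\D$ to pick $\lambda$ small enough, and symmetrically for the reverse inclusion via $\neg\D$ and irrationality of $1-x$. Your route is more elementary --- it needs only \Cref{fact:sat-rational} and \Cref{fact:power-set-model}, and the affineness of the probability-distribution axioms --- and it delivers the slightly sharper conclusion that for irrational $x$ the two relations literally coincide counterexample-for-counterexample, rather than inferring this indirectly from the classification of valid arguments. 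What the paper's route buys is economy within its own development, since the corollaries it appeals to are already established and used elsewhere; what yours buys is independence from that heavier machinery and a cleaner handling of the case analysis (the paper's appeal to \Cref{fact:setfmla-to-setset} tacitly requires dismissing the disjuncts in which $\bigvee\D$ is a tautology or some premise entails it, which your argument never has to confront).
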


\begin{proof}
  First, suppose the consequence relations are distinct, to show that $x$ is rational.
  Since $\clos{x}{1}$-preservation consequence and $\hopen{x}{1}$-preservation consequence differ, there must be some argument $\sq{\G}{\D}$ that is either $\hopen{x}{1}$-preservation valid and $\clos{x}{1}$-preservation invalid, or $\clos{x}{1}$-preservation valid and $\hopen{x}{1}$-preservation invalid,

  Suppose the first disjunct.
  Then by \Cref{fact:setfmla-to-setset}, it must be that $\G$ is $\clos{x}{1}$-satisfiable but not $\hopen{x}{1}$-satisfiable.
  So, by \Cref{fact:sat-rational}, $x$ is rational.

  On the other hand, suppose the second disjunct.
  Then by \Cref{fact:fmlaset-captured}, it must be that $\D$ is $\clos{x}{1}$-tautologous but not $\hopen{x}{1}$-tautologous.
  By \Cref{fact:tautology-satisfiability-dual}, then, $\neg \D$ is $\clos{1 - x}{1}$-satisfiable but not $\hopen{1 - x}{1}$-satisfiable.
  By \Cref{fact:sat-rational}, then, $1 - x$ is rational; and then so is $x$ itself.

  Next, we show that when $x$ is rational the relations are incomparable.
  \Cref{fact:every-rational-maxsat} assures us that when $x$ is rational there is a set $\G$ that is $\clos{x}{1}$-satisfiable but not $\hopen{x}{1}$-satisfiable.
  But then $\sq{\G}{\emptyset}$ is an argument that is $\hopen{x}{1}$-preservation valid and not $\clos{x}{1}$-preservation valid.

  Also, since $x$ is rational so is $1 - x$, and there is thus a set $\D$ that is $\clos{1 - x}{1}$-satisfiable but not $\hopen{1 - x}{1}$ satisfiable, again by \Cref{fact:every-rational-maxsat}.
  But then by \Cref{fact:tautology-satisfiability-dual} we have that $\neg \D$ is $\clos{x}{1}$-tautologous and not $\hopen{x}{1}$-tautologous, which is to say that $\sq{\emptyset}{\neg \D}$ is $\clos{x}{1}$-preservation valid and $\hopen{x}{1}$-preservation invalid.
\end{proof}

We can also now count the distinct $\alpha$-preservation consequence relations:

\begin{theorem} \label{fact:uncountable-closed}
  For any distinct $x, y \in \hopen{0}{1}$, if $x$ is the threshold of $\alpha$ and $y$ is the threshold of $\beta$, then $\alpha$-preservation consequence and $\beta$-preservation consequence are incomparable.
\end{theorem}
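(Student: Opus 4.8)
The plan is to show incomparability by producing, for the two upsets in question, one argument valid under one but not the other and one argument valid the other way round. Without loss of generality take $x < y$; write $\alpha$ for the upset with threshold $x$ and $\beta$ for the one with threshold $y$, each of which may be open or closed. First I would choose a rational $r$ with $x < r < y$ and let $\G$ be the set supplied by \Cref{fact:every-rational-maxsat}, so $\G$ is $\clos{r}{1}$-satisfiable but not $\hopen{r}{1}$-satisfiable. The model witnessing $\clos{r}{1}$-satisfiability assigns every $\gamma \in \G$ probability exactly $r$, and $r > x$, so that same model witnesses $\alpha$-satisfiability of $\G$ regardless of whether $\alpha$ is open or closed. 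Conversely, were $\G$ to be $\beta$-satisfiable, then since every member of $\beta$ is $\geq y > r$ we would obtain a model assigning each $\gamma$ a probability $> r$, contradicting the failure of $\hopen{r}{1}$-satisfiability; hence $\G$ is not $\beta$-satisfiable.

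With this $\G$ in hand I would examine the argument $\sq{\G}{\bot}$. Since $\G$ is $\beta$-unsatisfiable and $\prob(\bot) = 0$ lies outside every upset, no probabilistic model can be a $\beta$-preservation counterexample, so $\sq{\G}{\bot}$ is $\beta$-preservation valid. On the other hand $\G$ is $\alpha$-satisfiable, $\bot$ is not a classical tautology, no $\gamma \in \G$ classically entails $\bot$ (each $\gamma$ is consistent, being a disjunction of consistent sentences in the construction behind \Cref{fact:every-rational-maxsat}), and $\alpha \neq \{1\}$ because its threshold lies in $(0,1)$; so \Cref{fact:setfmla-captured} tells us that $\sq{\G}{\bot}$ is \emph{not} $\alpha$-preservation valid. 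This establishes that $\beta$-preservation consequence is not included in $\alpha$-preservation consequence.

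For the reverse non-inclusion a \setfmla\ witness cannot work, since by \Cref{fact:full-narrowing-preserves-setfmla} every \setfmla\ argument valid for $\alpha$ is already valid for $\beta$; so I would instead dualize. The dual upsets $\dual{\alpha}$ and $\dual{\beta}$ have thresholds $1-x$ and $1-y$ with $1-y < 1-x$, so running the construction above with $\dual{\beta}$ in the role of the smaller-threshold upset and $\dual{\alpha}$ in the role of the larger yields a finite set $\G'$ for which $\sq{\G'}{\bot}$ is $\dual{\alpha}$-preservation valid but not $\dual{\beta}$-preservation valid. By \Cref{fact:tautology-satisfiability-dual}, $\sq{\G'}{\bot}$ is $\dual{\alpha}$-preservation valid iff $\sq{\neg\{\bot\}}{\neg\G'}$ is $\ddalpha$-preservation valid, and $\ddalpha = \alpha$; likewise with $\beta$ in place of $\alpha$. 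Hence $\sq{\neg\{\bot\}}{\neg\G'}$ is $\alpha$-preservation valid but not $\beta$-preservation valid, so $\alpha$-preservation consequence is not included in $\beta$-preservation consequence either, and the two are incomparable. The only subtle point is picking $r$ strictly between $x$ and $y$, which is precisely what renders the open/closed status of $\alpha$ and $\beta$ immaterial; beyond that the main thing to watch is keeping the dualization bookkeeping correct, which I do not expect to be a genuine obstacle.
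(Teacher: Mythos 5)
Your proposal is correct and follows essentially the same route as the paper: pick a rational strictly between the two thresholds, apply \Cref{fact:every-rational-maxsat} to get a set that is $\alpha$-satisfiable but $\beta$-unsatisfiable (giving one direction of non-inclusion), then dualize via \Cref{fact:tautology-satisfiability-dual} for the other direction. The only differences are cosmetic---you use $\sq{\G}{\bot}$ where the paper uses $\sq{\G}{\emptyset}$, and you route the $\alpha$-invalidity through \Cref{fact:setfmla-captured} where the $\alpha$-satisfying model is already directly a counterexample.
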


\begin{proof}
  Without loss of generality, let $x < y$; we first show there is a set that is $\clos{x}{1}$-satisfiable but not $\clos{y}{1}$-satisfiable.
  To see this, take some rational $z$ such that $x < z < y$, and use \Cref{fact:every-rational-maxsat} to arrive at some $\G$ that is $\clos{z}{1}$-satisfiable but not $\hopen{z}{1}$-satisfiable.
  Since $\G$ is $\clos{z}{1}$-satisfiable and $x < z$, it is also $\hopen{x}{1}$-satisfiable, and so $\alpha$-satisfiable.
  And since $\G$ is not $\hopen{z}{1}$-satisfiable and $z < y$, it is also not $\clos{y}{1}$-satisfiable, and so not $\beta$-satisfiable.
  Now, consider the argument $\sq{\G}{\emptyset}$; this is $\beta$-preservation valid and $\alpha$-preservation invalid.

  For the other direction of incomparability, note that since $x < y$ we also have $1 - y < 1 - x$; so we can take some rational $w$ such that $1 - y < w < 1 - x$, and use \Cref{fact:every-rational-maxsat} to arrive at some $\D$ that is $\clos{w}{1}$-satisfiable but $\hopen{w}{1}$-unsatisfiable.
  This ensures that $\D$ is $\hopen{1 - y}{1}$-satisfiable and $\clos{1 - x}{1}$-unsatisfiable,  which by \Cref{fact:tautology-satisfiability-dual} gives us that $\neg \D$ is $\hopen{x}{1}$-tautologous but not $\clos{y}{1}$-tautologous.
  This ensures that $\neg \D$ is $\alpha$-tautologous and not $\beta$-tautologous, and so $\sq{\emptyset}{\neg \D}$ is $\alpha$-preservation valid and not $\beta$-preservation valid.
\end{proof}

\begin{corollary} \label{fact:uncountably-many-preservation}
  There are uncountably many distinct preservation consequence relations, and any distinct preservation consequence relations are incomparable.
\end{corollary}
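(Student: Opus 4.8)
The plan is to derive this almost entirely from \Cref{fact:distinct-iff-rational,fact:uncountable-closed}. Every preservation consequence relation is the $\alpha$-preservation relation of some upset $\alpha$, and the upsets are linearly ordered by $\subseteq$ (with $\{1\}$ least and $\hopen{0}{1}$ greatest). For the cardinality half, I would pick for each $x \in (0,1)$ the closed upset $\clos{x}{1}$; by \Cref{fact:uncountable-closed} these have pairwise incomparable, hence pairwise distinct, consequence relations, and $(0,1)$ is uncountable, so there are uncountably many. For the incomparability half, let $R$ and $R'$ be distinct preservation consequence relations, coming from upsets $\alpha$ and $\beta$; since $\alpha=\beta$ would force $R=R'$, we have $\alpha\neq\beta$, and without loss of generality $\alpha\subsetneq\beta$. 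If $\inf\alpha=\inf\beta$ then $\{\alpha,\beta\}=\{\hopen{x}{1},\clos{x}{1}\}$ for some $x\in(0,1)$, and \Cref{fact:distinct-iff-rational} gives incomparability directly. If $\inf\alpha\neq\inf\beta$ and both thresholds lie in $\hopen{0}{1}$, \Cref{fact:uncountable-closed} applies. The one case left uncovered is when $\beta$ is the greatest upset $\hopen{0}{1}$, whose threshold $0$ is outside the range considered in \Cref{fact:uncountable-closed}.

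To close that last case --- and indeed to reprove incomparability uniformly --- I would use a fresh-atom construction together with duality. Given distinct upsets $\alpha\subsetneq\beta$, first obtain a finite set $\G$ that is $\alpha$-unsatisfiable but $\beta$-satisfiable: when $\inf\alpha>\inf\beta$, choose a rational $z$ strictly between them and take the set from \Cref{fact:every-rational-maxsat}, which is $\clos{z}{1}$-satisfiable (hence $\beta$-satisfiable, as $\clos{z}{1}\subseteq\beta$) and not $\hopen{z}{1}$-satisfiable (hence $\alpha$-unsatisfiable, as $\alpha\subseteq\hopen{z}{1}$); when $\inf\alpha=\inf\beta=x$, the hypothesis $R\neq R'$ together with \Cref{fact:distinct-iff-rational} makes $x$ rational, so \Cref{fact:every-rational-maxsat} again supplies a set that is $\clos{x}{1}$-satisfiable (that is, $\beta$-satisfiable) but not $\hopen{x}{1}$-satisfiable (that is, $\alpha$-unsatisfiable). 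Now take an atom $r$ not occurring in $\G$ and consider $\sq{\G}{r}$: it is $\alpha$-preservation valid, because $\G$ being $\alpha$-unsatisfiable leaves no room for an $\alpha$-preservation counterexample, but it is not $\beta$-preservation valid, because $\beta\supsetneq\alpha$ gives $\beta\neq\{1\}$ while $\G$ is $\beta$-satisfiable, $r$ is not a classical tautology, and no single $\gamma\in\G$ classically entails $r$ (each $\gamma$ is consistent, getting positive probability in the $\beta$-satisfying model, and none mentions $r$), so \Cref{fact:setfmla-captured} applies. Hence $R\not\subseteq R'$. For the reverse inclusion I would run this same argument on $\dual{\beta}\subsetneq\dual{\alpha}$ (the $\star$ operation reverses $\subseteq$, and by \Cref{fact:tautology-satisfiability-dual} it preserves and reflects preservation validity, so $\dual{\alpha}$-preservation and $\dual{\beta}$-preservation are again distinct), obtaining a \setfmla\ argument valid for $\dual{\beta}$ but not $\dual{\alpha}$, and then negate both of its sides: by \Cref{fact:tautology-satisfiability-dual} this yields a \fmlaset\ argument valid for $\beta$ but not $\alpha$, so $R'\not\subseteq R$. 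Thus $R$ and $R'$ are incomparable.

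I expect the main obstacle to be the bookkeeping around the borderline cases rather than any deep idea: one has to remember to invoke \Cref{fact:distinct-iff-rational} to learn that a shared threshold is rational \emph{before} \Cref{fact:every-rational-maxsat} becomes usable, and one has to check carefully that the fresh-atom conclusion really does trip \Cref{fact:setfmla-captured} --- in particular that $\beta\neq\{1\}$ (which is why we keep $\alpha\subsetneq\beta$) and that no premise entails $r$. The remaining small points (that there is a rational strictly between any two distinct thresholds, including the edge cases $\inf\beta=0$ and $\inf\alpha=1$, and the elementary behaviour of $\subseteq$ and $\star$ on upsets) are routine.
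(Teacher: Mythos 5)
Your proof is correct and follows essentially the same route as the paper's: uncountability from \Cref{fact:uncountable-closed} applied to the closed upsets, and incomparability by splitting on whether the two thresholds agree (handled by \Cref{fact:distinct-iff-rational}) or differ (handled by \Cref{fact:uncountable-closed}). Your additional fresh-atom construction re-derives, from the same ingredients (\Cref{fact:every-rational-maxsat}, \Cref{fact:setfmla-captured}, and duality via \Cref{fact:tautology-satisfiability-dual}), what the paper's proof of \Cref{fact:uncountable-closed} already establishes; you are right that the literal statement of that theorem excludes the threshold $0$ case (the upset $\hopen{0}{1}$), but its proof goes through unchanged there, so your patch, while careful, is not strictly necessary.
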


\begin{proof}
  The uncountability is from \Cref{fact:uncountable-closed} and the fact that $\hopen{0}{1}$ is uncountable.

  For the second half, take any upsets $\alpha, \beta$.
  If $\alpha$ and $\beta$ have the same threshold, then \Cref{fact:distinct-iff-rational} ensures that if $\alpha$-preservation consequence and $\beta$-preservation consequence are distinct they are incomparable.
  On the other hand, if $\alpha$ and $\beta$ have distinct thresholds, then \Cref{fact:uncountable-closed} ensures that $\alpha$-preservation consequence and $\beta$-preservation consequence are certainly incomparable.
\end{proof}

The results of this section give us an interesting picture.
In some sense the fact that \cite{knight2002measuring}, \cite{paris2004deriving}, and others consider only closed upsets makes a very small difference---one that matters at every rational threshold, but nowhere else.
On the other hand, the fact that \cite{paris2004deriving} considers only the \setfmla\ framework makes a very big difference: \Cref{fact:full-narrowing-preserves-setfmla} gives a linear order to the \setfmla\ fragments of these consequence relations, while \Cref{fact:uncountably-many-preservation} shows that in the full \setset\ framework this very much does not obtain.

\subsection{Structural and operational features}
\label{sec:disc-pres}

How different and how similar are preservation consequence relations from classical consequence, when considering structural and operational properties?

First of all, and for comparison with what comes next, we note that, simply because our $\alpha$-preservation consequence relations are all defined by preservation of some status, all of them have the Tarskian properties of reflexivity, monotonicity, and transitivity, in the following senses, by \citet[Thm.\ 2.1]{ss:mcl}:
\begin{itemize}
  \item $\sq{\phi}{\phi}$ is valid;
  \item if $\sq{\G}{\D}$ is valid, then $\sq{\G', \G}{\D, \D'}$ is valid; and
  \item if $\sq{\G}{\D, \phi}$ and $\sq{\phi, \G}{\D}$ are valid, then $\sq{\G}{\D}$ is valid.\footnote{\cite{ss:mcl} claims a different transitivity property here, one that is properly stronger than ours in general, but is equivalent in our present setting where only finite arguments are considered.
        See \citet[\S 2.1]{ss:mcl} and \citet[\S 1.16]{humberstone:c} for helpful discussion of this difference; or for further discussion of various properties that travel under the name `transitivity', and proofs of relations between these properties, see \cite{ripley:otocr,chen:TransitivityLogicalConsequence2024}.}
\end{itemize}

Regarding operational properties, we note (also for comparison with other consequence relations to be discussed presently) the situation around $n$-ary conjunction introduction.
Let $p_{1}, \ldots, p_{n}$ be the first $n$ atomic sentences, and let $CI_{n}$ be the argument $\sq{p_{1}, \ldots, p_{n}}{\bigwedge p_{i}}$.
Since this is a \setfmla\ classically valid argument, it is supervaluationistically valid, and so $\{1\}$-preservation valid.
Moreover, $CI_{0}$ and $CI_{1}$ are $\alpha$-preservation valid for any $\alpha$.
However, for any $\alpha \neq \{1\}$ and any $n \geq 2$, the argument $CI_{n}$ is $\alpha$-preservation invalid.
This follows immediately from \Cref{fact:setfmla-captured}, but we note it here for later comparison.

Next, we recall that subvaluationistic logic is what is sometimes called \demph{weakly paraconsistent} (see \citealt{hyde1997heaps}): while $\sq{\phi \land \neg \phi}{\psi}$ is subvaluationistically valid for any sentences $\phi, \psi$, there are nonetheless choices of $\phi, \psi$ where $\sq{\phi, \neg \phi}{\psi}$ is invalid.
For example, $\sq{p, \neg p}{q}$ is subvaluationistically invalid.
Upsets $\alpha$ that do not include $.5$ do not determine preservation consequence relations that are paraconsistent in any sense.
Conversely, supervaluational logic has been called \textit{weakly paracomplete} (see \citealt[p. 76]{hyde2008vagueness}, crediting Arruda): while $\sq{\psi}{\phi \vee \neg \phi}$ is supervaluationistically valid, $\sq{\psi}{\phi, \neg \phi}$ is not.

For later reference, we note that it is not just the extremes; indeed every preservation consequence relation has exactly one of these properties:
\begin{fact} \label{fact:weak-para-exactly-one}
  For any $\alpha$, if $.5 \not \in \alpha$, then $\alpha$-preservation consequence is weakly paracomplete but not weakly paraconsistent; and if $.5 \in \alpha$, then $\alpha$-preservation consequence is weakly paraconsistent but not weakly paracomplete.
\end{fact}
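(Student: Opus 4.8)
The plan is to isolate the single combinatorial fact about upsets and negation that does all the work, and then read off both halves of the statement. For any upset $\alpha$, probability distribution $\prob$, and sentence $\phi$ we have $\prob(\phi)+\prob(\neg\phi)=1$, so the smaller of $\prob(\phi),\prob(\neg\phi)$ is $\le .5$ and the larger is $\ge .5$. I would first establish: \textbf{(i)} if $.5\notin\alpha$, then $\prob(\phi)$ and $\prob(\neg\phi)$ are not both in $\alpha$; and \textbf{(ii)} if $.5\in\alpha$, then at least one of $\prob(\phi),\prob(\neg\phi)$ is in $\alpha$. For (i): since $\alpha$ is an upset omitting $.5$ we have $\inf\alpha\ge .5$; so if both values were in $\alpha$, the smaller one $s$ would satisfy $s\le .5$ and $s\ge\inf\alpha\ge .5$, forcing $s=\inf\alpha=.5$, whence $.5\in\alpha$ --- a contradiction. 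For (ii): the larger value $r$ satisfies $r\ge .5\ge\inf\alpha$; if $r>\inf\alpha$ then $r\in\alpha$ because $\alpha$ is an upset, and if $r=\inf\alpha$ then $r=.5\in\alpha$ by hypothesis; either way $r\in\alpha$.

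Second, I would record two facts that hold for every $\alpha$ regardless: $\sq{\phi\wedge\neg\phi}{\psi}$ is $\alpha$-preservation valid, since a counterexample would need $\prob(\phi\wedge\neg\phi)\in\alpha$ but a classical contradiction always has probability $0\notin\alpha$; and dually $\sq{\psi}{\phi\vee\neg\phi}$ is $\alpha$-preservation valid, since a classical tautology always has probability $1\in\alpha$. So the ``strong'' explosion and excluded-middle schemata always hold, and only the ``split'' versions are ever in question.

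Third, in the case $.5\notin\alpha$: weak paraconsistency fails because, by (i), no model makes both $\prob(\phi)\in\alpha$ and $\prob(\neg\phi)\in\alpha$, so $\sq{\phi,\neg\phi}{\psi}$ has no $\alpha$-preservation counterexample and is valid for all $\phi,\psi$ --- the relation is not paraconsistent in any sense. But weak paracompleteness holds: $\sq{\top}{p\vee\neg p}$ is valid by the second step, while the probabilistic model $\tuple{\{a,b\},\wp(\{a,b\}),\dn{\;},\prob}$ with $\dn{p}=\{a\}$ and $\prob(\{a\})=\prob(\{b\})=\tfrac{1}{2}$ has $\prob(\top)=1\in\alpha$ and $\prob(p)=\prob(\neg p)=\tfrac{1}{2}\notin\alpha$, an $\alpha$-preservation counterexample to $\sq{\top}{p,\neg p}$. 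The case $.5\in\alpha$ is the mirror image via (ii): no model makes both $\prob(\phi)\notin\alpha$ and $\prob(\neg\phi)\notin\alpha$, so $\sq{\psi}{\phi,\neg\phi}$ is valid for all $\phi,\psi$ and the relation is not weakly paracomplete; but the same model, now also taking $\dn{q}=\emptyset$, has $\prob(p)=\prob(\neg p)=\tfrac{1}{2}\in\alpha$ and $\prob(q)=0\notin\alpha$, an $\alpha$-preservation counterexample to $\sq{p,\neg p}{q}$, so together with the valid schema $\sq{p\wedge\neg p}{q}$ the relation is weakly paraconsistent.

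The only delicate point --- and thus the ``main obstacle,'' though a minor one --- is the boundary behaviour at threshold exactly $.5$, where one must use that an upset with threshold $.5$ contains $.5$ precisely when it is closed, so that (i) and (ii) hold without exception there; everything else is routine bookkeeping with the definitions of $\alpha$-preservation counterexample and probabilistic model.
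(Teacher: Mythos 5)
Your proposal is correct and takes essentially the same route as the paper, whose proof is the one-line observation that some model assigns probability $.5$ to both $p$ and $\neg p$ while no model assigns both a probability strictly greater than $.5$; your steps (i), (ii), the always-valid conjunctive/disjunctive schemata, and the two-world fair-coin counterexamples are just that observation spelled out in full.
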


\begin{proof}
  Immediate, once it's noted that there is a model that assigns probability $.5$ to both $p$ and $\neg p$, and that there is no model that assigns a probability strictly greater than $.5$ to both $p$ and $\neg p$.
\end{proof}

This also allows us to see that no $\alpha$-preservation consequence relation is self-dual.
In \Cref{sec:alpha-satisf-alpha}, we noted that no upset can be self-dual, since any upset $\alpha$ includes $.5$ iff its dual $\dalpha$ does not.
However, that left open whether there could be two dual upsets which, while necessarily distinct as sets, still manage to determine the same preservation consequence relation.
We can now see that this is never the case, since for any upset $\alpha$, exactly one of $\alpha$-preservation consequence and $\dalpha$-preservation consequence must be weakly paraconsistent, and so they cannot be the same.

\section{Symmetric consequence}
\label{sec:symm-cons}

We find it natural to think that classical logic should in some sense be a limit case of probabilistic reasoning, applying perfectly in cases of perfect certainty, and gradually approached as levels of certainty increase.
However, as we've seen, neither material consequence nor preservation consequence seems to be able to support this natural thought.
Material consequence is fully classical regardless of what upset we choose, and so regardless of the level of certainty in play; classical consequence gets to apply perfectly in cases of perfect certainty, but it applies perfectly in all other cases as well!
It is not in any illuminating sense \emph{approached} as certainty increases.

On the other hand, preservation consequence does at least provide us with some distinct consequence relations for different upsets $\alpha$, as described in \Cref{fact:distinct-iff-rational} and \Cref{fact:uncountably-many-preservation}.
However, it also fails to support our natural thought, in two ways.
First, at the limit of perfect certainty---the upset $\{1\}$---preservation consequence is not classical but supervaluational.

But even if we were to artificially restrict our attention to the \setfmla\ fragment, where this difference is not visible, \Cref{fact:setfmla-captured} tells us that again, we do not \emph{approach} this limit as our upset narrows in any useful or informative way.
To see this, take any classically-valid \setfmla\ argument whose premises are classically consistent, whose conclusion is not classically tautologous, and where no single premise classically entails the conclusion.
For example, take modus ponens $\sq{p \hook q, p}{q}$, or conjunction introduction $\sq{p, q}{p \land q}$, or really almost any example that actually gets used in argumentation.
All such arguments, because \setfmla\ and classically valid, are $\{1\}$-preservation valid.
But by \Cref{fact:setfmla-captured}, none of them is $\alpha$-preservation valid for any \emph{other} choice of $\alpha$.
These arguments all languish in invalidity as our choice of $\alpha$ narrows until the very last instant, at $\{1\}$, where they all simultaneously leap to validity.
This is not the gradual approach envisioned in our natural thought.

In this section, then, we introduce a third counterexample notion, determining what we call \demph{symmetric consequence}.
Unlike material and preservation consequence, symmetric consequence \emph{does} support our natural thought, as we will show.

\subsection{Definition}
\label{sec:sc-definition}

To define symmetric consequence, we use the idea of the \demph{mirror image} of an upset.
Recall from \Cref{sec:alpha-satisf-alpha} that for any upset $\alpha$, its mirror image $\malpha$ is $\abst{x}{1 - x \in \alpha}$.
This reflects $\alpha$ around the midpoint $.5$.
Since $\alpha$ is an upset, this means that $\malpha$ must contain 0, must not contain 1, and must be closed \emph{downwards}.

With this notion in hand, we are ready to define symmetric counterexamples:
\begin{defn} \label{defn:sym-con}
A probabilistic model $\modl_{\prob}$ is an $\alpha$-symmetric counterexample to an argument $\sq{\G}{\D}$ iff $\prob[\G] \subseteq \alpha$ and $\prob[\D] \subseteq \malpha$.
Thus, the argument $\sq{\G}{\D}$ is $\alpha$-symmetric valid iff every $\modl_{\prob}$ is such that if $\prob[\G] \subseteq \alpha$, then there is some $\delta \in \D$ with $\prob(\delta) \not\in \malpha$.
\end{defn}

In the same way that an upset $\alpha$ indicates probabilities that are high enough, we take its mirror image $\malpha$ to indicate probabilities that are too low.
By using the mirror image in this way, we assume that tight standards for what counts as high enough come linked with tight standards for what counts as too low, and similarly for loose standards.\footnote{It is, of course, possible to consider a freer notion that would allow these standards to be set independently of each other.
  Sticking to probability distributions, this freer notion is closely related to ideas in \cite{knight2003probabilistic,paris2009inconsistency}.
  To see a similar idea in the case of fuzzy logic see \cite{cobreros2024tolerance}.
  For more general reflections on such independence between premises and conclusions, see \cite{humberstone:hl,bmw:imv,ces:clcmvl,fr:valuations,chemla2019suszko}.
  Here, though, we keep things simple and do not move to such freer settings.}

For example, consider again (as in \Cref{sec:pres-cons}) the upset $\alpha = \hopen{.7}{1}$ and a single roll of a 6-sided die, where $p$ says that the die comes up strictly greater than $1$ and $q$ says that the die comes up  strictly less than $6$.
To give an $\alpha$-symmetric counterexample to the argument $\sq{p, q}{p \land q}$, we would need some probability distribution $\prob$ such that $\prob(p) > .7$ and $\prob(q) > .7$ and $\prob(p \land q) < .3$.
But this can never happen, whether or not the die is fair; by the \frech\ bounds (which are a special case of \citealt[Thm.\ 13, p.\ 38]{adams1998primer}), for any probability distribution $\prob$ where $\prob(p) > .7$ and $\prob(q) > .7$, we have $\prob(p \land q) > .4$.
So the argument $\sq{p, q}{p \land q}$ is $\hopen{.7}{1}$-symmetric valid.

As we saw above, however, this same argument is not $\hopen{.7}{1}$-preservation valid.
Intuitively, in this argument probabilities can slip from high enough to not high enough as we go from premises to conclusion, but they cannot slip from high enough to too low.

\subsubsection{Negation and monotonicity}
\label{sec:negat-symm-cons}

We begin our consideration of $\alpha$-symmetric consequence by quickly noting some features it has that will smooth the reasoning to follow.

First, in \Cref{sec:disc-pres}, we noted that no $\alpha$-preservation consequence relation is self-dual, in the sense that it validates any $\sq{\G}{\D}$ iff it validates $\sq{\neg \D}{\neg \G}$.
The situation is very different for our symmetric consequence relations:

\begin{fact} \label{fact:sym-negation}
  For any upset $\alpha$: the argument $\sq{\phi, \G}{\D}$ is $\alpha$-symmetrically valid iff $\sq{\G}{\D, \neg \phi}$ is; and the argument $\sq{\G}{\D, \phi}$ is $\alpha$-symmetrically valid iff $\sq{\neg \phi, \G}{\D}$ is.
\end{fact}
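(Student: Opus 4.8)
The statement is a symmetry/contraposition fact about $\alpha$-symmetric consequence, and the natural strategy is to unfold \Cref{defn:sym-con} and use the single algebraic fact that $\prob(\neg\phi) = 1 - \prob(\phi)$ for any probability distribution, together with the defining property of the mirror image: $x \in \malpha$ iff $1 - x \in \alpha$ (equivalently, $x \notin \malpha$ iff $1 - x \notin \alpha$, since $\malpha$ is determined by this biconditional). The plan is to show that a model $\modl_{\prob}$ is an $\alpha$-symmetric counterexample to $\sq{\phi, \G}{\D}$ if and only if it is an $\alpha$-symmetric counterexample to $\sq{\G}{\D, \neg\phi}$; the two halves of the Fact then follow, the second being the first applied to $\neg\phi$ in place of $\phi$ (using that $\neg\neg\phi$ and $\phi$ have the same probability on every probability distribution, so they can be freely interchanged here).

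\textbf{Key steps.} First I would fix an arbitrary probabilistic model $\modl_{\prob}$ and spell out what it means for it to be an $\alpha$-symmetric counterexample to $\sq{\phi, \G}{\D}$: this requires $\prob(\phi) \in \alpha$, $\prob(\gamma) \in \alpha$ for every $\gamma \in \G$, and $\prob(\delta) \in \malpha$ for every $\delta \in \D$. Next I would spell out what it means to be an $\alpha$-symmetric counterexample to $\sq{\G}{\D, \neg\phi}$: this requires $\prob(\gamma) \in \alpha$ for every $\gamma \in \G$, $\prob(\delta) \in \malpha$ for every $\delta \in \D$, and $\prob(\neg\phi) \in \malpha$. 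The only clauses that differ are `$\prob(\phi) \in \alpha$' versus `$\prob(\neg\phi) \in \malpha$', and these are equivalent: $\prob(\neg\phi) \in \malpha$ iff $1 - \prob(\neg\phi) \in \alpha$ iff $\prob(\phi) \in \alpha$. Hence the two counterexample conditions coincide, so the argument $\sq{\phi,\G}{\D}$ has an $\alpha$-symmetric counterexample iff $\sq{\G}{\D,\neg\phi}$ does, which is exactly the first biconditional of the Fact (recalling that validity, via the general recipe in \Cref{sec:general-recipe}, means the absence of a counterexample). For the second biconditional, apply the first with $\neg\phi$ substituted for $\phi$: $\sq{\neg\phi, \G}{\D}$ is $\alpha$-symmetrically valid iff $\sq{\G}{\D, \neg\neg\phi}$ is, and since $\prob(\neg\neg\phi) = \prob(\phi)$ on every probability distribution, $\sq{\G}{\D, \neg\neg\phi}$ and $\sq{\G}{\D, \phi}$ have exactly the same $\alpha$-symmetric counterexamples, hence the same validity status.

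\textbf{Main obstacle.} There is no real obstacle here; the proof is a routine unpacking of definitions, and the only thing to be careful about is the bookkeeping around the mirror image — specifically making sure the biconditional `$x \in \malpha \iff 1-x \in \alpha$' is applied in the right direction, and that the interchange of $\phi$ with $\neg\neg\phi$ inside $\D$ is justified by the fact that probability distributions assign them equal values rather than by any claim that they are the same sentence. One subtlety worth a sentence is that the argument works uniformly whether $\alpha$ is open or closed, since nothing above depends on the boundary behaviour of $\alpha$, only on the defining biconditional for $\malpha$. So the write-up can be as terse as the proof of \Cref{fact:tautology-satisfiability-dual}: essentially ``spelling out definitions, recalling that $\prob(\neg\phi) = 1 - \prob(\phi)$ and that $x \in \malpha$ iff $1 - x \in \alpha$.''
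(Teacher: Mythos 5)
Your proposal is correct and matches the paper's proof, which simply says the fact is immediate from \Cref{defn:mirror-dual,defn:sym-con} together with $\prob(\neg\phi) = 1 - \prob(\phi)$; you have just spelled out the same definitional unpacking in full detail. The derivation of the second biconditional from the first via $\neg\neg\phi$ is a harmless variant (one could equally argue it directly and symmetrically), and everything checks out.
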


\begin{proof}
  Immediate from \Cref{defn:mirror-dual,defn:sym-con}, recalling that $\prob(\neg \phi) = 1 - \prob(\phi)$ for any probability distribution $\prob$.
\end{proof}

\Cref{fact:sym-negation} more than suffices to show that every $\alpha$-symmetric consequence relation is self-dual, regardless of the choice of $\alpha$.

Second (and relatedly), negation gives us a direct bridge between $\alpha$-symmetric validity and $\alpha$-unsatisfiability:

\begin{fact} \label{fact:sym-unsat}
  An argument $\sq{\G}{\D}$ is $\alpha$-symmetric valid iff $\G \cup \neg \D$ is $\alpha$-unsatisfiable.
\end{fact}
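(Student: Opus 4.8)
The plan is to unwind the definitions on both sides and observe that they coincide literally. Recall from \Cref{defn:sym-con} that $\sq{\G}{\D}$ is $\alpha$-symmetric valid iff there is no probabilistic model $\modl_{\prob}$ with $\prob[\G] \subseteq \alpha$ and $\prob[\D] \subseteq \malpha$. And recall from \Cref{defn:sat-taut} that $\G \cup \neg \D$ is $\alpha$-satisfiable iff there \emph{is} some probabilistic model $\modl_{\prob}$ with $\prob(\psi) \in \alpha$ for every $\psi \in \G \cup \neg\D$; so $\G \cup \neg\D$ is $\alpha$-unsatisfiable iff no such model exists. Thus it suffices to show that, for a fixed model $\modl_{\prob}$, we have $\prob[\G \cup \neg\D] \subseteq \alpha$ if and only if $\prob[\G] \subseteq \alpha$ and $\prob[\D] \subseteq \malpha$.

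First I would split $\prob[\G \cup \neg\D] \subseteq \alpha$ into the conjunction of $\prob[\G] \subseteq \alpha$ and $\prob[\neg\D] \subseteq \alpha$ (using that $\prob[\G \cup \neg\D] = \prob[\G] \cup \prob[\neg\D]$ as sets of reals). Then the only thing to check is that $\prob[\neg\D] \subseteq \alpha$ iff $\prob[\D] \subseteq \malpha$. For this, note that $\prob[\neg\D] = \abst{\prob(\neg\delta)}{\delta \in \D} = \abst{1 - \prob(\delta)}{\delta \in \D}$, using the defining property $\prob(\neg\delta) = 1 - \prob(\delta)$ of any probability distribution. So $\prob[\neg\D] \subseteq \alpha$ says exactly that $1 - \prob(\delta) \in \alpha$ for every $\delta \in \D$, which by \Cref{defn:mirror-dual} (the definition of $\malpha$ as $\abst{x}{1-x \in \alpha}$) is exactly the statement that $\prob(\delta) \in \malpha$ for every $\delta \in \D$, i.e.\ that $\prob[\D] \subseteq \malpha$. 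Chaining these equivalences gives the claim.

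There is no real obstacle here — the result is essentially definitional, and the ``proof'' amounts to carefully matching up the existential quantifier over models in the definition of $\alpha$-unsatisfiability with the (negated, since we are talking about validity rather than the counterexample notion) existential quantifier in the definition of $\alpha$-symmetric consequence. The only point requiring any attention at all is being careful that $\prob[\cdot]$ denotes the image of a set of sentences under $\prob$ as a set of reals, so that union of sentence-sets becomes union of real-number-sets; once that is clear the argument is a one-line chain of iffs. I would write it up in a single short paragraph, of roughly the same length and register as the proof of \Cref{fact:tautology-satisfiability-dual} (``Spelling out definitions, recalling that \dots'').
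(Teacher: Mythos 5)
Your proposal is correct and follows exactly the route the paper takes: the paper's proof simply says the result is immediate from \Cref{defn:sat-taut,defn:mirror-dual,defn:sym-con}, using $\prob(\neg\phi) = 1 - \prob(\phi)$, which is precisely the chain of definitional unwindings you spell out. You have just written out in full the details the paper leaves implicit.
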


\begin{proof}
 Immediate from \Cref{defn:sat-taut,defn:mirror-dual,defn:sym-con}, as in the proof of \Cref{fact:sym-negation}.
\end{proof}

\noindent In what follows, we will appeal to \Cref{fact:sym-unsat} without further comment, treating questions of $\alpha$-symmetric validity and of $\alpha$-unsatisfiability interchangeably.

Third, we turn to the question of the monotonicity of $\alpha$-symmetric consequence.
(We leave consideration of the other Tarskian properties for \Cref{sec:sym-tarskian}, since we will not need them before then.)

\begin{fact} \label{fact:sym-monotonic}
  For any upset $\alpha$, the $\alpha$-symmetric consequence relation is monotonic.
\end{fact}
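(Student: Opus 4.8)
The plan is to reduce monotonicity of $\alpha$-symmetric consequence to a fact about $\alpha$-unsatisfiability, using the bridge established in \Cref{fact:sym-unsat}. Monotonicity for a \setset\ relation means: if $\sq{\G}{\D}$ is $\alpha$-symmetrically valid, then so is $\sq{\G', \G}{\D, \D'}$ for any finite $\G', \D'$. By \Cref{fact:sym-unsat}, the hypothesis says $\G \cup \neg\D$ is $\alpha$-unsatisfiable, and the conclusion says $\G' \cup \G \cup \neg\D \cup \neg\D'$ is $\alpha$-unsatisfiable. Since $\G \cup \neg\D \subseteq \G' \cup \G \cup \neg(\D \cup \D')$ (using that $\neg(\D \cup \D') = \neg\D \cup \neg\D'$), it suffices to observe that $\alpha$-unsatisfiability is inherited by supersets.

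So the one thing to check is: if $\Sigma$ is $\alpha$-unsatisfiable and $\Sigma \subseteq \Theta$, then $\Theta$ is $\alpha$-unsatisfiable. This is immediate from \Cref{defn:sat-taut}: a probabilistic model $\modl_\prob$ witnessing $\alpha$-satisfiability of $\Theta$ would have $\prob(\theta) \in \alpha$ for every $\theta \in \Theta$, hence in particular $\prob(\sigma) \in \alpha$ for every $\sigma \in \Sigma$, contradicting $\alpha$-unsatisfiability of $\Sigma$. I would state this either inline or as a throwaway observation; it requires no use of the upset structure of $\alpha$ at all, just the definition of satisfiability.

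There is essentially no obstacle here; the only thing to be careful about is the bookkeeping with negation, i.e.\ making sure $\neg(\D \cup \D')$ and $\neg\D \cup \neg\D'$ are treated as the same set (which is just how the notation $\neg(\cdot)$ applied to a set is defined) so that the subset inclusion $\G \cup \neg\D \subseteq (\G' \cup \G) \cup \neg(\D \cup \D')$ is transparent. Concretely, I would write: by \Cref{fact:sym-unsat}, $\sq{\G}{\D}$ being $\alpha$-symmetrically valid means $\G \cup \neg\D$ is $\alpha$-unsatisfiable; since $\G \cup \neg\D \subseteq (\G \cup \G') \cup \neg(\D \cup \D')$, and any superset of an $\alpha$-unsatisfiable set is $\alpha$-unsatisfiable, $(\G \cup \G') \cup \neg(\D \cup \D')$ is $\alpha$-unsatisfiable; so by \Cref{fact:sym-unsat} again, $\sq{\G', \G}{\D, \D'}$ is $\alpha$-symmetrically valid.
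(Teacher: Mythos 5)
Your proof is correct and is essentially the paper's own argument: the paper simply observes directly that any $\alpha$-symmetric counterexample to $\sq{\G', \G}{\D, \D'}$ is already an $\alpha$-symmetric counterexample to $\sq{\G}{\D}$, which is exactly the content of your observation that $\alpha$-unsatisfiability passes to supersets, just phrased without the detour through \Cref{fact:sym-unsat}. The detour is harmless (that fact is established before this one), so nothing is amiss.
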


\begin{proof}
 Suppose that $\sq{\G', \G}{\D, \D'}$ is $\alpha$-symmetric invalid; then it has a counterexample.
 But any counterexample to this argument is also a counterexample to $\sq{\G}{\D}$.
\end{proof}

\subsection{Classicality in the limit}
\label{sec:classicality-limit}

Here, we show that $\alpha$-symmetric consequence fits well with our natural thought above.
That is, as we move from wider upsets to narrower, the resulting symmetric consequence relations steadily approach classical consequence until, at the narrowest upset $\{1\}$, they reach it exactly.

\begin{fact} \label{fact:sym-order}
  If $\alpha \subseteq \beta$ and $\sq{\G}{\D}$ is $\beta$-symmetric valid, then it is $\alpha$-symmetric valid.
\end{fact}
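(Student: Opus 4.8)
The statement to prove is \Cref{fact:sym-order}: if $\alpha \subseteq \beta$ and $\sq{\G}{\D}$ is $\beta$-symmetric valid, then it is $\alpha$-symmetric valid. The plan is to argue by contraposition on counterexamples, exactly in the style of \Cref{fact:sym-monotonic}: I would suppose that $\sq{\G}{\D}$ is $\alpha$-symmetric \emph{invalid} and show it is $\beta$-symmetric invalid. By \Cref{defn:sym-con}, $\alpha$-symmetric invalidity gives a probabilistic model $\modl_{\prob}$ with $\prob[\G] \subseteq \alpha$ and $\prob[\D] \subseteq \malpha$. The goal is to see that this very same model is a $\beta$-symmetric counterexample, i.e.\ that $\prob[\G] \subseteq \beta$ and $\prob[\D] \subseteq \mirror{\beta}$.

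The first inclusion is immediate: since $\alpha \subseteq \beta$ and $\prob[\G] \subseteq \alpha$, we get $\prob[\G] \subseteq \beta$. For the second inclusion I need that $\malpha \subseteq \mirror{\beta}$. This follows from the definition of mirror image in \Cref{defn:mirror-dual}: $\malpha = \abst{x \in \clos{0}{1}}{1 - x \in \alpha}$, and likewise for $\mirror{\beta}$; so if $x \in \malpha$ then $1 - x \in \alpha \subseteq \beta$, hence $x \in \mirror{\beta}$. Thus $\prob[\D] \subseteq \malpha \subseteq \mirror{\beta}$, and $\modl_{\prob}$ witnesses $\beta$-symmetric invalidity. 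Taking the contrapositive yields the claim.

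There is really no main obstacle here — the only thing to be careful about is getting the direction of the mirror-image inclusion right (mirroring is order-preserving on subsets in the relevant sense, so $\alpha \subseteq \beta$ does give $\malpha \subseteq \mirror{\beta}$, \emph{not} the reverse), and noting that this is the opposite ordering behavior from what one might naively expect, since narrowing $\alpha$ both tightens the "high enough" standard on the premises and tightens the "too low" standard on the conclusions, and both tightenings make counterexamples harder to find. One could alternatively route the whole argument through \Cref{fact:sym-unsat}, reducing to the monotonicity of $\alpha$-unsatisfiability in $\alpha$ (if $\G \cup \neg\D$ is $\alpha$-satisfiable and $\alpha \subseteq \beta$ then it is $\beta$-satisfiable, which is again immediate from \Cref{defn:sat-taut}), but the direct argument above is shorter and self-contained.
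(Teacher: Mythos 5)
Your proof is correct and is essentially identical to the paper's: the paper also argues contrapositively, observing that $\alpha \subseteq \beta$ gives $\malpha \subseteq \mirror{\beta}$, so any $\alpha$-symmetric counterexample is already a $\beta$-symmetric counterexample. You simply spell out the mirror-image inclusion in more detail than the paper does.
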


\begin{proof}
  We show the contrapositive; suppose that we have an $\alpha$-symmetric counterexample to $\sq{\G}{\D}$.
  Since $\alpha \subseteq \beta$, we know that $\malpha \subseteq \mirror{\beta}$, and so this very counterexample is itself a $\beta$-symmetric counterexample as well.
\end{proof}

Compare \Cref{fact:sym-order} to \Cref{fact:full-narrowing-preserves-setfmla,fact:full-widening-preserves-fmlaset}.
For preservation consequence, narrowing our upset can have complex results: it can move \setfmla\ arguments from invalid to valid, and can move \fmlaset\ arguments in the other direction, from valid to invalid; and can affect other arguments in either direction.\footnote{Consider, for example, the argument $\sq{p, q, \neg(p \land q)}{r, \neg r}$.
This is $\alpha$-preservation valid because of its conclusion set if $.5 \in \alpha$, and it is $\alpha$-preservation valid because of its premise set if $\frac{2}{3} \not\in \alpha$.
However, for any $\alpha$ where $.5 \not\in \alpha$ and $\frac{2}{3} \in \alpha$, this argument is $\alpha$-preservation invalid, as we can build a model that assigns probability $\frac{2}{3}$ to each premise and $.5$ to each conclusion.
So as we consider the full range of upsets, narrowing from $\hopen{0}{1}$ to $\{1\}$, this argument goes from valid to invalid and then back to valid.}
For symmetric consequence, on the other hand, narrowing our upset can only move an argument from invalid to valid, full stop.

This drastically simplifies the situation, by giving us a linear order on $\alpha$-symmetric consequence relations, with $\hopen{0}{1}$-symmetric consequence as the weakest and $\{1\}$-symmetric consequence as the strongest.
We turn now to describing these two consequence relations.

\begin{fact} \label{fact:sym-1-classical}
  An argument $\sq{\G}{\D}$ is $\{1\}$-symmetric valid iff it is classically valid.
\end{fact}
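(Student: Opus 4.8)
The plan is to route through Fact~\ref{fact:sym-unsat}, which tells us that $\sq{\G}{\D}$ is $\{1\}$-symmetric valid iff $\G \cup \neg\D$ is $\{1\}$-unsatisfiable. On the classical side, $\sq{\G}{\D}$ is classically valid iff no world of any probabilistic model makes every member of $\G$ true while making every member of $\D$ false, which is to say iff $\G \cup \neg\D$ is classically unsatisfiable. So the whole statement reduces to a single lemma: for any finite set $\Sigma$ of sentences, $\Sigma$ is $\{1\}$-satisfiable iff $\Sigma$ is classically satisfiable. (Equivalently, one can skip Fact~\ref{fact:sym-unsat} and argue directly, noting that $\mirror{\{1\}} = \{0\}$, so a $\{1\}$-symmetric counterexample to $\sq{\G}{\D}$ is exactly a model with $\prob(\gamma) = 1$ for all $\gamma \in \G$ and $\prob(\delta) = 0$ for all $\delta \in \D$; but the satisfiability formulation is cleaner.)

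For the lemma, the right-to-left direction is immediate: if $\Sigma$ is classically satisfiable, pick a world $w$ in some model with $w \in \dn{\sigma}$ for all $\sigma \in \Sigma$, and concentrate all probability on $w$ (for instance, take the one-world model $\tuple{\{w\}, \wp\{w\}, \dn{\;}, \prob}$ with $\prob(\{w\}) = 1$, or appeal to Fact~\ref{fact:power-set-model}); then $\prob(\sigma) = 1$ for every $\sigma \in \Sigma$. The left-to-right direction rests on the observation that probability-1 events are closed under finite intersection: if $\prob(A) = \prob(B) = 1$, then by finite additivity $\prob(A \cap B) = \prob(A) + \prob(B) - \prob(A \cup B) \geq 1$, so $\prob(A \cap B) = 1$. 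Hence if $\modl_{\prob}$ witnesses $\{1\}$-satisfiability of $\Sigma = \{\sigma_{1}, \ldots, \sigma_{k}\}$, an easy induction gives $\prob(\dn{\sigma_{1}} \cap \cdots \cap \dn{\sigma_{k}}) = 1$, and in particular this intersection is nonempty since $\prob(\emptyset) = 0$; any world in it witnesses classical satisfiability of $\Sigma$.

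Chaining the equivalences then finishes it: $\sq{\G}{\D}$ is $\{1\}$-symmetric valid iff $\G \cup \neg\D$ is $\{1\}$-unsatisfiable iff $\G \cup \neg\D$ is classically unsatisfiable iff $\sq{\G}{\D}$ is classically valid. I do not expect a genuine obstacle here; the only step that is more than unfolding definitions is the closure of probability-1 events under finite intersection, and that is a one-line consequence of finite additivity. (It is also worth flagging, for the narrative, that this is the point where the extra ``texture'' that distinguished $\{1\}$-preservation from classicality in Fact~\ref{fact:1super} collapses: preservation of probability $1$ through $\bigvee\D$ is weaker than preservation of probability $1$ to \emph{some} single $\delta$, but the symmetric clause at $\{1\}$ only demands some $\delta$ with $\prob(\delta) \neq 0$, i.e. $\dn{\delta} \neq \emptyset$, and this is exactly what a world in $\dn{\bigwedge\G} \setminus \dn{\bigvee\D}$ supplies.)
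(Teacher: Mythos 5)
Your proof is correct and takes essentially the same route as the paper's: one direction concentrates all probability mass on a single classically witnessing world, and the other uses finiteness plus finite additivity to extract a world from a probability-1 event (the paper phrases this as $\prob(\bigwedge\G)=1 > 0 = \prob(\bigvee\D)$ forcing $\dn{\bigwedge\G}\not\subseteq\dn{\bigvee\D}$, which is the same observation as your closure of probability-1 events under finite intersection). The only difference is packaging---you route through \Cref{fact:sym-unsat} and a satisfiability lemma, while the paper argues directly with counterexamples---and that is immaterial.
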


\begin{proof}
  First, suppose that we have a classical counterexample to $\sq{\G}{\D}$.
  Then consider the model $\tuple{\{w\}, \wp(\{w\}), \dn{\;}, \prob}$, where $\dn{\;}$ is set up to make the world $w$ that classical counterexample.
  This gives us $\prob(\gamma) = 1$ for each $\gamma \in \G$ and $\prob(\delta) = 0$ for each $\delta \in \D$, so we have our $\{1\}$-symmetric counterexample.

  For the other direction, suppose we have a $\{1\}$-symmetric counterexample to $\sq{\G}{\D}$.
  This is some $\tuple{W, \alg, \dn{\;}, \prob}$ where $\prob(\gamma) = 1$ for each $\gamma \in \G$ and $\prob(\delta) = 0$ for each $\delta \in \D$.
  Since $\G$ and $\D$ are finite, this means that $\prob(\bigwedge \G) = 1$ and $\prob(\bigvee \D) = 0$ as well.
  Since $\prob(\bigwedge \G) > \prob(\bigvee \D)$, we must have $\dn{\bigwedge \G} \not\subseteq \dn{\bigvee \D}$, so there must be some $w \in W$ where $w \in \dn{\bigwedge \G}$ and $w \not\in \dn{\bigvee \D}$; this $w$ is our classical counterexample.
\end{proof}

As for $\hopen{0}{1}$-symmetric validity, it turns out to be relatively simple, obtaining exactly when some premise is classically contradictory or some conclusion is classically tautologous:

\begin{fact}
  The argument $\sq{\G}{\D}$ is $\hopen{0}{1}$-symmetric valid iff either: there is some $\gamma \in \G$ such that $\gamma$ is classically contradictory; or there is some $\delta \in \D$ such that $\delta$ is a classical tautology.
\end{fact}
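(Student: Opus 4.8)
The plan is to reduce everything to Fact~\ref{fact:sym-unsat}, which says that $\sq{\G}{\D}$ is $\hopen{0}{1}$-symmetric valid iff the finite set $\G \cup \neg \D$ is $\hopen{0}{1}$-unsatisfiable. So it suffices to establish the following claim about an arbitrary finite set $\Sigma$ of sentences: $\Sigma$ is $\hopen{0}{1}$-unsatisfiable iff some member of $\Sigma$ is classically contradictory. Granting this claim and applying it with $\Sigma = \G \cup \neg \D$, the desired biconditional follows at once, using that $\neg \delta$ is classically contradictory exactly when $\delta$ is a classical tautology.

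The right-to-left direction of the claim is immediate: if $\sigma \in \Sigma$ is classically contradictory, then $\dn{\sigma} = \emptyset$ in every probabilistic model, so $\prob(\sigma) = \prob(\emptyset) = 0 \notin \hopen{0}{1}$, and hence no model can witness $\hopen{0}{1}$-satisfiability of $\Sigma$.

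For the left-to-right direction I would argue the contrapositive: assuming every $\sigma \in \Sigma$ is classically consistent, I exhibit a single finite probabilistic model assigning all of them positive probability simultaneously. Let $At(\Sigma)$ be the (finite) set of atoms occurring in $\Sigma$, set $W = \wp(At(\Sigma))$ and $\alg = \wp(W)$, let $\dn{p} = \abst{w \in W}{p \in w}$ for each $p \in At(\Sigma)$ (and $\dn{q} = \emptyset$, say, for atoms $q \notin At(\Sigma)$), extended to all sentences in the classical way, and let $\prob$ assign probability $\frac{1}{|W|}$ to each singleton $\{w\}$ --- well-defined since $W$ is finite and nonempty. Each $w \in W$ corresponds to the classical valuation making true exactly the atoms in $w$; since every $\sigma \in \Sigma$ mentions only atoms from $At(\Sigma)$ and is classically consistent, there is some $w \in W$ at which $\sigma$ is true, so $\dn{\sigma} \neq \emptyset$ and therefore $\prob(\sigma) \geq \frac{1}{|W|} > 0$. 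Thus $\Sigma$ is $\hopen{0}{1}$-satisfiable, which completes the contrapositive.

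There is no substantive obstacle here; the only point requiring a little care is the standard observation --- already exploited in the proof of Fact~\ref{fact:power-set-model} --- that a classically consistent sentence over the atoms $At(\Sigma)$ is true at one of the worlds in $\wp(At(\Sigma))$. (Alternatively, one could obtain the witnessing model by invoking Fact~\ref{fact:power-set-model} applied to $\Sigma$ together with the uniform probability distribution over state descriptions, which assigns positive probability to every classically consistent sentence; but the model above is the most direct witness.)
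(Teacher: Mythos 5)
Your proposal is correct. The route differs from the paper's in its decomposition: you first pass through Fact~\ref{fact:sym-unsat} to reduce the whole statement to the clean claim that a finite set is $\hopen{0}{1}$-unsatisfiable iff it contains a classical contradiction, and then witness satisfiability with the canonical powerset model over $At(\Sigma)$ carrying the uniform distribution. The paper instead works with the argument directly, building a bespoke model with exactly $m+n$ worlds --- one world verifying each premise $\gamma_i$ and one world falsifying each conclusion $\delta_j$ --- again with uniform probability $\frac{1}{m+n}$, and checks the two conditions (premises get probability $>0$, conclusions get probability $<1$) separately. The underlying engine is the same in both cases: positive probability only requires a nonempty denotation, and finitely many individually consistent sentences can all be given positive probability at once by spreading mass uniformly over finitely many witnessing worlds. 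What your version buys is modularity and symmetry --- negating the conclusions collapses two conditions into one, and the reduction to unsatisfiability is exactly the move the paper itself announces it will use ``without further comment'' elsewhere in that section. What the paper's version buys is a smaller, fully explicit counterexample model and independence from Fact~\ref{fact:sym-unsat}. Both handle the degenerate case $\G \cup \D = \emptyset$ correctly (your $W = \wp(\emptyset)$ is the nonempty singleton $\{\emptyset\}$).
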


\begin{proof}
  Since $\prob(\gamma) = 0$ for all probability distributions $\prob$ and classically unsatisfiable $\gamma$, and similarly $\prob(\delta) = 1$ for all probability distributions $\prob$ and classically tautologous $\delta$, the right-to-left direction is secured.

  For the left-to-right direction, note that a $\hopen{0}{1}$-symmetric counterexample to $\sq{\G}{\D}$ is any model that assigns non-0 probability to everything in $\G$ and non-1 probability to everything in $\D$.
  Now, we proceed contrapositively.

  Take some $\sq{\G}{\D}$ with no classically unsatisfiable $\gamma \in \G$ and no classically tautologous $\delta \in \D$.
  If $\G \cup \D = \emptyset$, then any model is a $\hopen{0}{1}$-symmetric counterexample to $\sq{\G}{\D}$, so we have invalidity; assume, then, that $\G \cup \D$ is nonempty.
  Let $\G = \{\gamma_{1}, \ldots, \gamma_{m}\}$ and $\D = \{\delta_{1}, \ldots, \delta_{n}\}$; we know $m + n \geq 1$.

  Now, take a model $\tuple{W, \alg, \dn{\;}, \prob}$ such that:
  \begin{itemize}
    \item $W = \{w_{1}, \ldots, w_{m}, w_{m+1}, \ldots, w_{m+n}\}$;
    \item $\alg = \wp(W)$;
    \item $\dn{\;}$ is such that:
          \begin{itemize}
            \item for all $1 \leq i \leq m$, we have $w_{i} \in \dn{\gamma_{i}}$; and
            \item for all $1 \leq j \leq n$, we have $w_{m + j} \not\in \dn{\delta_{j}}$; and
          \end{itemize}
    \item for all $1 \leq k \leq m + n$, we have $\prob(\{w_{k}\}) = \frac{1}{m + n}$.
  \end{itemize}

  Such a model exists; the constraints on $\dn{\;}$ are jointly achievable because each constrains a different world, and we know that each $\gamma_{i}$ is classically satisfiable and each $\delta_{j}$ classically nontautologous.
  In this model, we have $\prob(\gamma_{i}) \geq \frac{1}{m + n} > 0$ for each $\gamma_{i} \in \G$, and $\prob(\delta_{j}) \leq 1 - \frac{1}{m + n} < 1$ for each $\delta_{j} \in \D$, so the model is a $\hopen{0}{1}$-symmetric counterexample to $\sq{\G}{\D}$.
\end{proof}

Our picture of $\alpha$-symmetric consequence is filling in bit by bit: at the narrowest upset $\{1\}$, we indeed reach exactly \setset\ classical logic, but at the widest upset $\hopen{0}{1}$, we have something different. Symmetric consequence is thus unlike both preservation consequence (which never gives classical logic) and material consequence (which never gives anything else).

We can draw on some of our earlier reasoning around $\alpha$-satisfiability (from \Cref{sec:how-many-consequence}) to learn about what happens along the way, and in particular which distinct upsets determine distinct symmetric consequence relations.

\begin{fact} \label{fact:sym-distinct-iff-rational}
  For any $x \in (0, 1)$, the $\clos{x}{1}$-symmetric and $\hopen{x}{1}$-symmetric consequence relations are distinct iff $x$ is rational.
\end{fact}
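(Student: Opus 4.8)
The plan is to connect $\alpha$-symmetric consequence to $\alpha$-satisfiability via \Cref{fact:sym-unsat}, and then import the machinery from \Cref{sec:how-many-consequence} wholesale. Recall that $\sq{\G}{\D}$ is $\alpha$-symmetric valid iff $\G \cup \neg\D$ is $\alpha$-unsatisfiable, so the two consequence relations $\clos{x}{1}$-symmetric and $\hopen{x}{1}$-symmetric are distinct iff there is some finite set $\Sigma$ that is $\clos{x}{1}$-satisfiable but not $\hopen{x}{1}$-satisfiable (take $\Sigma = \G \cup \neg\D$, and conversely, given such a $\Sigma$, let $\G = \Sigma$ and $\D = \emptyset$, noting $\neg\emptyset = \emptyset$). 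So the claim reduces to: such a $\Sigma$ exists iff $x$ is rational.

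For the right-to-left direction, suppose $x$ is rational. Then \Cref{fact:every-rational-maxsat} directly hands us a set $\G$ that is $\clos{x}{1}$-satisfiable and not $\hopen{x}{1}$-satisfiable; taking $\sq{\G}{\emptyset}$, this is $\clos{x}{1}$-symmetric invalid (it has a counterexample, namely the witnessing model) but $\hopen{x}{1}$-symmetric valid, by \Cref{fact:sym-unsat}. Hence the two relations differ.

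For the left-to-right direction, suppose the two relations are distinct. By \Cref{fact:sym-order} (monotonicity of the ordering with $\hopen{x}{1} \subseteq \clos{x}{1}$ — wait, we need $\clos{x}{1} \subseteq \hopen{x}{1}$? no: $\hopen{x}{1} = (x,1] \subseteq [x,1] = \clos{x}{1}$, so by \Cref{fact:sym-order} with $\alpha = \hopen{x}{1}$, $\beta = \clos{x}{1}$, every $\clos{x}{1}$-symmetric valid argument is $\hopen{x}{1}$-symmetric valid), the only way they can differ is for there to be an argument $\sq{\G}{\D}$ that is $\hopen{x}{1}$-symmetric valid but $\clos{x}{1}$-symmetric invalid. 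By \Cref{fact:sym-unsat}, this means $\G \cup \neg\D$ is $\hopen{x}{1}$-unsatisfiable but $\clos{x}{1}$-satisfiable. So \Cref{fact:sat-rational} applies to the finite set $\G \cup \neg\D$, and we conclude that $x$ is rational, as desired.

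The main point to get right is the direction of the inclusion in the appeal to \Cref{fact:sym-order}: since $\hopen{x}{1} \subseteq \clos{x}{1}$, that fact tells us $\clos{x}{1}$-symmetric validity implies $\hopen{x}{1}$-symmetric validity, so the two relations are comparable and differ only in one direction — which is exactly what makes \Cref{fact:sat-rational} applicable without needing the dual (tautology-side) considerations that appeared in the proof of \Cref{fact:distinct-iff-rational}. Indeed, because $\alpha$-symmetric validity is already tied to $\alpha$-\emph{unsatisfiability} directly (no passage through duals or $\alpha$-tautology is needed, thanks to \Cref{fact:sym-negation} collapsing everything to the premise side), this proof is noticeably shorter than its preservation-consequence analogue \Cref{fact:distinct-iff-rational}; there is no real obstacle beyond bookkeeping the empty-conclusion-set reduction and the inclusion direction.
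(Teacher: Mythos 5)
Your proposal is correct and takes essentially the same approach as the paper, whose proof of this fact is simply the citation ``Directly from \Cref{fact:sat-rational} and \Cref{fact:every-rational-maxsat}''; you have filled in exactly the intended details (the reduction via \Cref{fact:sym-unsat} to $\alpha$-satisfiability of $\G \cup \neg\D$, and the use of \Cref{fact:sym-order} to ensure any difference goes in the one direction where \Cref{fact:sat-rational} applies). The direction-checking on the inclusion $\hopen{x}{1} \subseteq \clos{x}{1}$ is handled correctly.
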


\begin{proof}
  Directly from \Cref{fact:sat-rational} and \Cref{fact:every-rational-maxsat}.
\end{proof}

\begin{fact} \label{fact:sym-closed-distinct}
  For any distinct $x, y \in \hopen{0}{1}$, the $\clos{x}{1}$-symmetric and $\clos{y}{1}$-symmetric consequence relations are distinct, and so there are uncountably many distinct symmetric consequence relations.
\end{fact}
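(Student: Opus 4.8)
The plan is to reduce the statement to a fact about $\alpha$-satisfiability via \Cref{fact:sym-unsat}, and then to apply \Cref{fact:every-rational-maxsat} at a rational threshold strictly between $x$ and $y$. By \Cref{fact:sym-unsat}, an argument of the form $\sq{\G}{\emptyset}$ is $\alpha$-symmetric valid iff $\G$ is $\alpha$-unsatisfiable; so to show that the $\clos{x}{1}$- and $\clos{y}{1}$-symmetric consequence relations are distinct it suffices to produce a finite set $\G$ that is $\alpha$-satisfiable for exactly one of these two upsets.

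I would assume without loss of generality that $x < y$. Since $0 < x < y \le 1$, the interval $(x,y)$ is a nonempty subset of $(0,1)$, so it contains a rational number $z$, which therefore lies in $(0,1)$. By \Cref{fact:every-rational-maxsat} there is a finite set $\G$ that is $\clos{z}{1}$-satisfiable and not $\hopen{z}{1}$-satisfiable. Now $\clos{z}{1} \subseteq \clos{x}{1}$ because $x < z$, so $\G$ is $\clos{x}{1}$-satisfiable; and $\clos{y}{1} \subseteq \hopen{z}{1}$ because $z < y$, so $\G$ is not $\clos{y}{1}$-satisfiable. Hence $\sq{\G}{\emptyset}$ is $\clos{y}{1}$-symmetric valid but not $\clos{x}{1}$-symmetric valid, which witnesses that the two consequence relations are distinct. (In light of \Cref{fact:sym-order}, this actually shows the $\clos{x}{1}$-symmetric consequence relation is properly included in the $\clos{y}{1}$ one.) For the final clause, this first part shows that the assignment sending $x \in \hopen{0}{1}$ to the $\clos{x}{1}$-symmetric consequence relation is injective, and $\hopen{0}{1}$ is uncountable, so there are uncountably many distinct symmetric consequence relations.

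I do not anticipate a serious obstacle. The only points that need care are checking that the chosen rational $z$ genuinely lies in $(0,1)$, so that \Cref{fact:every-rational-maxsat} applies, and verifying the two interval inclusions $\clos{z}{1}\subseteq\clos{x}{1}$ and $\clos{y}{1}\subseteq\hopen{z}{1}$ that allow transporting satisfiability and non-satisfiability from threshold $z$ to thresholds $x$ and $y$ respectively.
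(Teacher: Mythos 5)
Your proposal is correct and is essentially the paper's own proof: the paper likewise takes $x<y$ without loss of generality, picks a rational $z$ with $x \leq z < y$, invokes \Cref{fact:every-rational-maxsat} to get a set $\G$ that is $\clos{z}{1}$-satisfiable but not $\hopen{z}{1}$-satisfiable, and uses $\sq{\G}{\emptyset}$ (via \Cref{fact:sym-unsat}) as the separating argument. The only cosmetic difference is that you require $z$ strictly greater than $x$ where the paper allows $z = x$; both work.
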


\begin{proof}
  As in \Cref{fact:uncountable-closed}.
  Without loss of generality, let $x < y$.
  Take some rational $z$ such that $x \leq z < y$, and use \Cref{fact:every-rational-maxsat} to arrive at some $\G$ that is $\clos{z}{1}$-satisfiable but not $\hopen{z}{1}$-satisfiable.
\end{proof}

There is more to say, however, about $\alpha$-symmetric consequence for closed upsets $\alpha$:

\begin{fact} \label{fact:limit}
  If $\sq{\G}{\D}$ is $\clos{x}{1}$-symmetric valid, then there is some $\alpha \supsetneq \clos{x}{1}$ such that $\sq{\G}{\D}$ is $\alpha$-symmetric valid.
\end{fact}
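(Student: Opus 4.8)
The plan is to transfer the statement to one about $\alpha$-unsatisfiability via \Cref{fact:sym-unsat}, and then lean on Knight's Theorem~\ref{knight-thm414} together with the finiteness of arguments. Write $\Sigma = \G \cup \neg\D$. By \Cref{fact:sym-unsat}, $\sq{\G}{\D}$ is $\alpha$-symmetric valid iff $\Sigma$ is $\alpha$-unsatisfiable, so it suffices to prove: if $\Sigma$ is $\clos{x}{1}$-unsatisfiable, then $\Sigma$ is $\alpha$-unsatisfiable for some upset $\alpha \supsetneq \clos{x}{1}$.

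First I would apply \Cref{knight-thm414} to the finite set $\Sigma$, obtaining the \emph{maximum} value $x^{*}$ for which $\Sigma$ is $\clos{x^{*}}{1}$-satisfiable. (Some such $x^{*}$ exists since $\Sigma$ is trivially $\clos{0}{1}$-satisfiable; here we need only that the maximum is attained, not that it is rational.) Since $\clos{x^{*}}{1}$-satisfiability would imply $\clos{x}{1}$-satisfiability whenever $x \leq x^{*}$ (a model with all $\prob(\sigma) \geq x^{*}$ has all $\prob(\sigma) \geq x$), and $\Sigma$ is $\clos{x}{1}$-unsatisfiable by hypothesis, we must have $x^{*} < x$; in particular $\Sigma$ is nonempty.

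Next I would show $\Sigma$ is $\hopen{x^{*}}{1}$-unsatisfiable. Suppose instead that some model $\modl_{\prob}$ gives $\prob(\sigma) > x^{*}$ for every $\sigma \in \Sigma$. Since $\Sigma$ is finite and nonempty, $\varepsilon := \min_{\sigma \in \Sigma}\prob(\sigma) - x^{*}$ is strictly positive, and then $\prob(\sigma) \geq x^{*} + \varepsilon$ for all $\sigma \in \Sigma$, with $x^{*} + \varepsilon \leq 1$; so $\Sigma$ is $\clos{x^{*} + \varepsilon}{1}$-satisfiable, contradicting the maximality of $x^{*}$. Taking $\alpha = \hopen{x^{*}}{1}$ then finishes the argument: this is a genuine upset (it contains $1$, excludes $0$, and is upward closed), and since $x^{*} < x$ we have $\clos{x}{1} \subsetneq \hopen{x^{*}}{1}$, so $\alpha \supsetneq \clos{x}{1}$ while $\Sigma$ is $\alpha$-unsatisfiable, hence $\sq{\G}{\D}$ is $\alpha$-symmetric valid.

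The only step needing real care is the extraction of the uniform gap $\varepsilon$ from a hypothetical $\hopen{x^{*}}{1}$-model; this is precisely where finiteness of the argument is used, and it is what upgrades ``$x^{*}$ is the maximal \emph{closed} threshold of satisfiability'' to ``$x^{*}$ is the last threshold, open or closed, at which $\Sigma$ remains satisfiable.'' One should also note the degenerate branch $x^{*} = 0$ (which occurs, e.g., when some member of $\Sigma$ is classically contradictory): there $\alpha = \hopen{0}{1}$, and the argument still goes through because $x > x^{*} = 0$.
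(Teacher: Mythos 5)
Your proposal is correct and follows essentially the same route as the paper: both reduce the claim to the $\alpha$-unsatisfiability of $\G \cup \neg\D$ and invoke \Cref{knight-thm414} to obtain a maximal satisfiable closed threshold strictly below $x$. The only difference is in the final choice of witness: the paper takes $\alpha = \clos{y}{1}$ for some $y$ strictly between that maximum and $x$ (using density of the reals), while you take the open upset at the maximum itself, which costs you the extra (correct) uniform-gap argument from finiteness but yields the widest possible witnessing upset.
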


\begin{proof}
  Take any $\clos{x}{1}$-symmetric valid argument $\sq{\G}{\D}$.
  By \cref{knight-thm414}, there is some maximum $z \in \clos{0}{1}$ such that $\G \cup \neg \D$ is $\clos{z}{1}$-consistent.
  Then $z < x$, since if $z \geq x$ the argument $\sq{\G}{\D}$ would not be $\clos{x}{1}$-symmetric valid; so choose some $y$ such that $z < y < x$ and let $\alpha = \clos{y}{1}$, noting that $\alpha \supsetneq \clos{x}{1}$.
  By $z$'s maximality, we know that $\Sigma$ is $\alpha$-inconsistent; but then $\sq{\G}{\D}$ is $\alpha$-symmetric valid.
\end{proof}

If we imagine starting a process at the widest upset $\hopen{0}{1}$ and gradually narrowing all the way to the other extreme $\{1\}$, we now have a clear picture of the result.
While arguments certainly move from invalid to valid as we narrow our upset, this doesn't happen just anywhere.
First, \Cref{fact:sym-distinct-iff-rational} ensures us that this never happens in the move from $\clos{x}{1}$ to $\hopen{x}{1}$ where $x$ is irrational.
And \Cref{fact:limit} ensures that this never happens as we move to a \emph{closed} upset at all, regardless of whether its threshold is rational or irrational---any argument validated by the symmetric consequence at a closed upset was \emph{already} validated by some properly wider upset.
The only time a new argument can become valid, then, is in the shift from $\clos{x}{1}$ to $\hopen{x}{1}$ when $x$ is rational---and \Cref{fact:sym-distinct-iff-rational} ensures that this indeed happens for every rational $x$.

Moreover, this process indeed reaches classical logic right at $\{1\}$, just as our natural picture requires.
(By \Cref{fact:sym-closed-distinct}, it cannot reach classical logic any earlier.)
We can see that the way this happens is very different from the way that preservation consequence reaches supervaluational logic at $\{1\}$.
As we saw above, there is a very large class of arguments, including most classically valid \setfmla\ arguments of any interest, that are $\{1\}$-preservation valid but $\alpha$-preservation invalid for all other $\alpha$.
Preservation consequence has a massive leap exactly at $\{1\}$ in its \setfmla\ fragment, and a corresponding massive leap exactly at $\hopen{0}{1}$ in its \fmlaset\ fragment.
Symmetric consequence, by contrast, has no sudden leap at $\{1\}$; by \Cref{fact:limit} it adds no valid arguments that were not already counted as valid on some wider upset, and this holds even for the full \setset\ framework.

\subsection{Argument size}
\label{sec:argument-size}

So far, this has all left open any questions about \emph{which} arguments become valid \emph{when} as the upset for symmetric consequence narrows.
But in fact we can describe this process more precisely, pinpointing for at least some arguments exactly where in this process they move from invalid to valid, and giving for every argument an upper bound.

To do this, we'll use the notion of the \demph{size} of an argument and another result from \cite{knight2002measuring}:

\begin{defn}
  The \demph{size} of the argument $\sq{\G}{\D}$ is $|\G \cup \neg \D|$.\footnote{In fact, the results to follow would also hold if size counted members of $\G \cup \neg \D$ only up to classical equivalence; and this notion might be better-behaved in some ways.
  But we kept it simple.}
  An argument $\sq{\G'}{\D'}$ is a \demph{subargument} of the argument $\sq{\G}{\D}$ iff $\G' \subseteq \G$ and $\D' \subseteq \D$, and a \demph{proper} subargument iff it is a subargument and they are distinct.
\end{defn}

\begin{fact}[\protect{\citealt[Thm.\ 3.5, p.\ 80]{knight2002measuring}}] \label{fact:sym-min-size}
  If $\sq{\G}{\D}$ is classically valid, but no proper subargument of it is classically valid, then where $n$ is its size, it is $\clos{\frac{n - 1}{n}}{1}$-symmetric invalid and $\hopen{\frac{n - 1}{n}}{1}$-symmetric valid.
\end{fact}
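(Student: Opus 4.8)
The plan is to route everything through \Cref{fact:sym-unsat}, which reduces $\alpha$-symmetric validity of $\sq{\G}{\D}$ to $\alpha$-unsatisfiability of the set $\Sigma := \G \cup \neg\D$, whose cardinality is by definition the size $n$. First I would unwind the hypotheses in these terms. Since a world of a probabilistic model acts as a classical valuation (and any consistent set of sentences is realized at a world of some model, e.g.\ by \Cref{fact:power-set-model}), a subargument $\sq{\G'}{\D'}$ is classically valid iff $\G'\cup\neg\D'$ is classically unsatisfiable; and one checks routinely that the proper subarguments of $\sq{\G}{\D}$ correspond exactly to the proper subsets of $\Sigma$. So the assumptions ``classically valid, no proper subargument classically valid'' say precisely that $\Sigma$ is a \emph{minimal} classically unsatisfiable set, of size $n$. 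By \Cref{fact:sym-unsat}, it then suffices to show that $\Sigma$ is $\clos{\frac{n-1}{n}}{1}$-satisfiable but not $\hopen{\frac{n-1}{n}}{1}$-satisfiable.

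The non-$\hopen{\frac{n-1}{n}}{1}$-satisfiability is a short union-bound argument that uses only unsatisfiability, not minimality. In any probabilistic model, $\bigcap_{\sigma\in\Sigma}\dn{\sigma} = \emptyset$, so $\bigcup_{\sigma\in\Sigma}\dn{\neg\sigma} = W$, and by subadditivity $1 = \prob(W) \le \sum_{\sigma\in\Sigma}\prob(\neg\sigma) = n - \sum_{\sigma\in\Sigma}\prob(\sigma)$; hence $\sum_{\sigma\in\Sigma}\prob(\sigma) \le n-1$, so not every $\sigma \in \Sigma$ can have $\prob(\sigma) > \frac{n-1}{n}$. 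Thus no model $\hopen{\frac{n-1}{n}}{1}$-satisfies $\Sigma$, and by \Cref{fact:sym-unsat} this makes $\sq{\G}{\D}$ be $\hopen{\frac{n-1}{n}}{1}$-symmetric valid.

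For $\clos{\frac{n-1}{n}}{1}$-satisfiability I would build an explicit model, and this is where minimality does the work. Enumerate $\Sigma = \{\sigma_1,\dots,\sigma_n\}$. By minimality, for each $i$ the set $\Sigma\setminus\{\sigma_i\}$ is classically satisfiable, so fix a classical valuation $v_i$ of the atoms of $\Sigma$ satisfying every $\sigma_j$ with $j\neq i$; note $v_i$ must falsify $\sigma_i$ itself, since otherwise $\Sigma$ would be satisfiable, and in particular the $v_i$ are pairwise distinct. Now take the probabilistic model with worlds $W = \{v_1,\dots,v_n\}$ (realized via \Cref{fact:power-set-model}, or built directly), algebra $\wp(W)$, the induced denotation, and $\prob(\{v_i\}) = \frac{1}{n}$ for each $i$. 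For each $j$ we have $\{v_i : i\neq j\}\subseteq\dn{\sigma_j}$, so $\prob(\sigma_j)\ge\frac{n-1}{n}$. Hence $\Sigma$ is $\clos{\frac{n-1}{n}}{1}$-satisfiable, and by \Cref{fact:sym-unsat} this model is a $\clos{\frac{n-1}{n}}{1}$-symmetric counterexample to $\sq{\G}{\D}$.

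I expect the only real friction to be bookkeeping rather than mathematics: confirming carefully that ``no proper subargument classically valid'' really is equivalent to ``$\Sigma$ minimal unsatisfiable'' (including the degenerate possibility that some $\gamma\in\G$ is syntactically a negation $\neg\delta$ with $\delta\in\D$), and noting the trivial boundary case $n=1$, where $\clos{\frac{n-1}{n}}{1} = \clos{0}{1}$ is not one of the official upsets but the statement still reads correctly. The substantive content --- the subadditivity bound for the upper direction and the $n$-world uniform model for the lower --- is straightforward once everything is phrased in terms of $\Sigma$.
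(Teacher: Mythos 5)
Your proposal is correct. Note that the paper itself offers no proof of this fact: it is stated as a citation of Knight's Theorem 3.5 and left at that, so there is no in-paper argument to compare against. What you have written is essentially the standard proof of Knight's result, correctly transported into this setting via \Cref{fact:sym-unsat}: the union/subadditivity bound $1 = \prob(W) \le \sum_{\sigma\in\Sigma}\prob(\neg\sigma)$ gives $\hopen{\frac{n-1}{n}}{1}$-unsatisfiability of $\Sigma = \G\cup\neg\D$ from bare unsatisfiability, and the uniform distribution over the $n$ witnessing valuations supplied by minimality gives $\clos{\frac{n-1}{n}}{1}$-satisfiability. Both halves check out, including the observation that each $v_i$ must falsify $\sigma_i$ (so each $\sigma_j$ gets probability exactly $\frac{n-1}{n}$).

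The one point worth making fully explicit is the bookkeeping you flag yourself: the hypothesis ``no proper subargument is classically valid'' in fact forces $\G$ and $\neg\D$ to be disjoint (if some $\gamma\in\G$ were syntactically $\neg\delta$ for $\delta\in\D$, then $\sq{\G\setminus\{\gamma\}}{\D}$ would be a proper subargument with the same associated set $\Sigma$, hence classically valid), after which every set $\Sigma\setminus\{\sigma_i\}$ really is of the form $\G'\cup\neg\D'$ for a proper subargument and is therefore satisfiable. With that settled, $n = |\Sigma|$ is the size in the paper's sense and the minimal-unsatisfiability reading is exactly right. No gaps.
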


This immediately settles the situation for very many arguments.
For example, recall from \Cref{sec:disc-pres} the $n$-ary conjunction introduction arguments, where $CI_{n}$ is $\sq{p_{1}, \ldots, p_{n}}{\bigwedge p_{i}}$.
Note that these all meet the conditions of \Cref{fact:sym-min-size}, and that $CI_{n}$ has size $n + 1$.
So for any $n$, the argument $CI_{n}$ is $\clos{\frac{n}{n + 1}}{1}$-invalid and $\hopen{\frac{n}{n+1}}{1}$-valid.
As $\alpha$ narrows, we can $\alpha$-symmetric validly conjoin larger and larger collections of conjuncts.

Or let $MP_{n}$ be the argument $\sq{p_{1}, p_{1} \hook p_{2}, \ldots, p_{n - 1} \hook p_{n}}{p_{n}}$.
Again, all of these meet the conditions of \Cref{fact:sym-min-size}, and $MP_{n}$ has size $n + 1$.
So for any $\alpha$, $MP_{n}$ is $\alpha$-symmetric valid iff $CI_{n}$ is---again, as $\alpha$ narrows, we can $\alpha$-symmetric validly detach longer and longer chains of material conditionals.

\Cref{fact:sym-min-size} gives us the exact place where certain arguments move from invalid to valid as we narrow our upset, but it only covers arguments with no classically-valid proper subarguments.
We can use it, however, to get an upper bound that applies to all arguments:\footnote{\cite{knight2002measuring} also gives interesting and useful results for cases where $\G \cup \neg \D$ is inconsistent but not minimally so, achieving tighter upper bounds than this; but there are complexities involved that we prefer to avoid here.}

\begin{fact} \label{fact:sym-size}
  If $\sq{\G}{\D}$ is classically valid and has size $n$, then it is $\hopen{\frac{n - 1}{n}}{1}$-symmetric valid.
\end{fact}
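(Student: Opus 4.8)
The plan is to route through \Cref{fact:sym-unsat} and then argue directly with a union bound, rather than going through minimal subarguments. Write $\Sigma = \G \cup \neg \D$, so that $n = |\Sigma|$ is the size of $\sq{\G}{\D}$; since $\sq{\G}{\D}$ is classically valid, $\Sigma$ is classically inconsistent. By \Cref{fact:sym-unsat}, it suffices to show that $\Sigma$ is $\hopen{\frac{n-1}{n}}{1}$-unsatisfiable, i.e.\ that no probabilistic model assigns every member of $\Sigma$ a probability strictly above $\frac{n-1}{n}$.

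So suppose toward a contradiction that $\modl_{\prob}$ is such a model, with $\prob(\sigma) > \frac{n-1}{n}$ for every $\sigma \in \Sigma$. Then $\prob(\neg \sigma) = 1 - \prob(\sigma) < \frac{1}{n}$ for each of the $n$ members $\sigma$ of $\Sigma$, and finite subadditivity of $\prob$ (a consequence of the probability axioms) gives $\prob\bigl(\bigvee_{\sigma \in \Sigma} \neg \sigma\bigr) \leq \sum_{\sigma \in \Sigma} \prob(\neg \sigma) < n \cdot \tfrac{1}{n} = 1$. But $\bigvee_{\sigma \in \Sigma} \neg \sigma$ is classically equivalent to $\neg \bigwedge \Sigma$, so its denotation is $W \setminus \dn{\bigwedge \Sigma}$, whence $\prob(\bigwedge \Sigma) > 0$ and thus $\dn{\bigwedge \Sigma} \neq \emptyset$. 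This contradicts the classical inconsistency of $\Sigma$, since a classically inconsistent conjunction has empty denotation in every probabilistic model (each world induces a classical valuation). Hence no such model exists. The case $n = 0$ is vacuous, since $\sq{\emptyset}{\emptyset}$ is not classically valid, and the argument runs uniformly through $n = 1$, where $\frac{n-1}{n} = 0$.

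There is no real obstacle here; the only point requiring care is the bookkeeping in the union-bound step — checking that the subadditivity inequality is applied to $\bigvee_{\sigma \in \Sigma}\neg\sigma$ and that this sentence's denotation really is the complement of $\dn{\bigwedge \Sigma}$ — but this is routine given that $\dn{\;}$ is a classical denotation function. An alternative route would instead extract an inclusion-minimal classically valid subargument of some size $m \leq n$, apply \Cref{fact:sym-min-size} to it, lift back to $\sq{\G}{\D}$ via \Cref{fact:sym-monotonic}, and then weaken the upset using $\frac{m-1}{m} \leq \frac{n-1}{n}$ together with \Cref{fact:sym-order}. That works too, but it introduces fiddly edge cases (e.g.\ $m = 1$, or a sentence occurring both among the premises and, negated, among the conclusions), so I would present the direct union bound instead.
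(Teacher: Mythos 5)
Your proof is correct, and it takes a genuinely different route from the paper's. The paper's own proof is exactly the alternative you sketch and set aside: extract an inclusion-minimal classically valid subargument of size $m \leq n$, apply \Cref{fact:sym-min-size} (Knight's theorem, which the paper imports without proof) to get $\hopen{\frac{m-1}{m}}{1}$-symmetric validity, lift back to $\sq{\G}{\D}$ by \Cref{fact:sym-monotonic}, and widen to $\hopen{\frac{n-1}{n}}{1}$ by \Cref{fact:sym-order}. Your union bound instead establishes the required $\hopen{\frac{n-1}{n}}{1}$-unsatisfiability of $\Sigma = \G \cup \neg\D$ directly from the probability axioms, and every step checks out: finite subadditivity does follow from finite additivity together with the non-negativity built into the codomain $\clos{0}{1}$; the denotation of $\bigvee_{\sigma\in\Sigma}\neg\sigma$ is indeed $W\setminus\dn{\bigwedge\Sigma}$ by De Morgan at the level of sets; classical validity of $\sq{\G}{\D}$ under the paper's semantic definition gives $\dn{\bigwedge\Sigma}=\emptyset$ in every model; and the $n=0$ and $n=1$ cases are correctly dispatched. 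What your route buys is self-containment: \Cref{fact:sym-size} no longer depends on \Cref{fact:sym-min-size}, whose genuinely hard content is the converse direction (satisfiability at the closed threshold for minimal inconsistent sets), which this upper bound does not need. What the paper's route buys is brevity given that \Cref{fact:sym-min-size} is already in play, plus the extra sharpness information about exactly where minimal arguments flip from invalid to valid; for the bound alone, your direct argument is arguably the cleaner one.
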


\begin{proof}
  If $\sq{\G}{\D}$ is classically valid, then it contains some subargument $\sq{\G'}{\D'}$ that is minimally classically valid; let this subargument have size $m$.
  By \Cref{fact:sym-min-size}, this subargument is $\hopen{\frac{m - 1}{m}}{1}$-symmetric valid; and so by \Cref{fact:sym-monotonic} $\sq{\G}{\D}$ is as well.
  But since $\sq{\G'}{\D'}$ is a subargument of $\sq{\G}{\D}$, we know that $m \leq n$, and so $\hopen{\frac{n - 1}{n}}{1} \subseteq \hopen{\frac{m - 1}{m}}{1}$.
  By \Cref{fact:sym-order}, then, we have our result.
\end{proof}

This fleshes out our natural thought, giving us more detail about when various classically-valid arguments settle into $\alpha$-symmetric validity.
It also lets us determine, for any classically valid argument, some degree of probability that is not perfect certainty but still high enough to ensure that the argument's premises can't all have probability that high while all its conclusions have symmetrically low probabilities.

\subsection{Tarskian properties, weak paraconsistency, weak paracompleteness}
\label{sec:sym-tarskian}

We now turn to questions of the Tarskian-ness (or otherwise) of symmetric consequence relations, and also consider their weak paraconsistency and weak paracompleteness.

We have already seen in \Cref{fact:sym-monotonic} that all $\alpha$-symmetric consequence relations are monotonic.
But since symmetric consequence is not defined by preserving any single status across all models, it is worth asking about reflexivity and transitivity as well.\footnote{There is more on this topic in \cite{humberstone:hl,fr:valuations}.}

\begin{fact} \label{fact:sym-reflexive}
  $\alpha$-symmetric consequence is reflexive iff $.5 \not\in \alpha$.
\end{fact}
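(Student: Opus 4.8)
The plan is to reduce the reflexivity question to a question about $\alpha$-unsatisfiability using \Cref{fact:sym-unsat}, and then to exploit the elementary fact that, for an upset $\alpha$, having $.5\not\in\alpha$ is equivalent to every member of $\alpha$ being strictly greater than $.5$.

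First, unwind the definitions. Reflexivity of $\alpha$-symmetric consequence is the claim that $\sq{\phi}{\phi}$ is $\alpha$-symmetric valid for every sentence $\phi$. By \Cref{fact:sym-unsat}, $\sq{\phi}{\phi}$ is $\alpha$-symmetric valid iff $\{\phi\}\cup\neg\{\phi\}$, i.e.\ $\{\phi,\neg\phi\}$, is $\alpha$-unsatisfiable, which by \Cref{defn:sat-taut} means there is no probabilistic model $\modl_\prob$ with both $\prob(\phi)\in\alpha$ and $\prob(\neg\phi)\in\alpha$. Since $\prob(\neg\phi)=1-\prob(\phi)$, this says precisely that no model assigns $\phi$ a probability $x$ with $x\in\alpha$ and $1-x\in\alpha$.

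For the direction $.5\not\in\alpha\implies$ reflexive: since $\alpha$ is an upset, any $x\in\alpha$ with $x\le .5$ would force $.5\in\alpha$; so $.5\not\in\alpha$ entails that every element of $\alpha$ is $>.5$. Hence if $\prob(\phi)\in\alpha$ then $\prob(\phi)>.5$, so $1-\prob(\phi)<.5$ and thus $1-\prob(\phi)\not\in\alpha$. No model can therefore put both $\phi$ and $\neg\phi$ into $\alpha$, so $\{\phi,\neg\phi\}$ is $\alpha$-unsatisfiable for every $\phi$, and the relation is reflexive. For the converse I would argue contrapositively: suppose $.5\in\alpha$; as used in the proof of \Cref{fact:weak-para-exactly-one}, there is a probabilistic model assigning probability $.5$ to both $p$ and $\neg p$ (two worlds, each of weight $.5$, with $p$ true at exactly one of them). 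In this model $\prob(p)=\prob(\neg p)=.5\in\alpha$, so $\{p,\neg p\}$ is $\alpha$-satisfiable, i.e.\ the model is an $\alpha$-symmetric counterexample to $\sq{p}{p}$, so reflexivity fails.

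I do not expect a genuine obstacle here; the argument is short. The only point that needs a moment's care is the equivalence, for upsets, between $.5\not\in\alpha$ and ``every member of $\alpha$ exceeds $.5$'' — this is exactly the \frech-bound-style phenomenon already in play in this section, and it is what makes the two directions of the biconditional match up cleanly.
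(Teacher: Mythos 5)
Your proposal is correct and follows essentially the same route as the paper's proof: the same two-world, equal-weight model witnesses failure of $\sq{p}{p}$ when $.5\in\alpha$, and the converse rests on the same observation that a counterexample would require a probability $x$ with both $x\in\alpha$ and $1-x\in\alpha$, which for an upset forces $.5\in\alpha$. Your detour through \Cref{fact:sym-unsat} and your explicit handling of arbitrary $\phi$ (rather than just the atom $p$) are harmless reformulations of the same argument.
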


\begin{proof}
  If $.5 \in \alpha$, then also $.5 \in \malpha$.
  In this case, take a two-world model $\tuple{\{u, v\}, \wp(\{u, v\}), \dn{\;}, \prob}$ such that $\dn{p} = \{u\}$ and $\prob(\{u\}) = \prob(\{v\}) = .5$; this is an $\alpha$-symmetric counterexample to $\sq{p}{p}$.

  On the other hand, suppose we have any $\alpha$-symmetric counterexample $\tuple{W, \alg, \dn{\;}, \prob}$ to $\sq{p}{p}$.
  Then we must have $\prob(p) \in \alpha \cap \malpha$; but if $\alpha \cap \malpha$ is nonempty then $.5 \in \alpha$.
\end{proof}

\begin{fact} \label{fact:sym-transitive}
  $\alpha$-symmetric consequence is transitive iff $.5 \in \alpha$ or $\alpha = \{1\}$.
\end{fact}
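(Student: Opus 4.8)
The plan is to pass through \Cref{fact:sym-unsat} and reduce the whole question to one about $\alpha$-satisfiability. Writing $\Sigma = \G \cup \neg \D$, \Cref{fact:sym-unsat} turns the transitivity condition (if $\sq{\G}{\D, \phi}$ and $\sq{\phi, \G}{\D}$ are valid then $\sq{\G}{\D}$ is valid) into: for every finite $\Sigma$ and every $\phi$, if $\Sigma \cup \{\neg \phi\}$ and $\Sigma \cup \{\phi\}$ are both $\alpha$-unsatisfiable, then $\Sigma$ is $\alpha$-unsatisfiable. Since every finite set of sentences occurs as such a $\Sigma$ (take $\D = \emptyset$), and the converse reading is immediate, transitivity of $\alpha$-symmetric consequence is \emph{equivalent} to the following: whenever a finite $\Sigma$ is $\alpha$-satisfiable, at least one of $\Sigma \cup \{\phi\}$ and $\Sigma \cup \{\neg \phi\}$ is $\alpha$-satisfiable, for every $\phi$.

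For the ``if'' direction, suppose first that $.5 \in \alpha$. Because $\alpha$ is an upset containing $.5$, every value $v$ satisfies $v \in \alpha$ or $1 - v \in \alpha$ (if $v \geq .5$ then $v \in \alpha$; otherwise $1 - v > .5$, so $1 - v \in \alpha$). Hence any model witnessing the $\alpha$-satisfiability of $\Sigma$ has $\prob(\phi) \in \alpha$ or $\prob(\neg \phi) = 1 - \prob(\phi) \in \alpha$, so the same model already witnesses the $\alpha$-satisfiability of $\Sigma \cup \{\phi\}$ or of $\Sigma \cup \{\neg \phi\}$, and we are done. Suppose instead that $\alpha = \{1\}$; then by \Cref{fact:sym-1-classical} the $\{1\}$-symmetric consequence relation is just classical consequence, which is transitive. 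That disposes of the ``if'' direction.

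For the ``only if'' direction, assume $.5 \notin \alpha$ and $\alpha \neq \{1\}$, so that $\alpha$ has threshold $x$ with $1/2 \leq x < 1$ (and $\alpha$ open when $x = 1/2$). It suffices to exhibit a finite $\Sigma$ that is $\alpha$-satisfiable while $\Sigma \cup \{\chi\}$ and $\Sigma \cup \{\neg \chi\}$ are both $\alpha$-unsatisfiable for some $\chi$: then $\sq{\Sigma}{\emptyset}$, cut against $\chi$, violates transitivity. The natural construction is a lottery one. By \Cref{fact:pairwise-inconsistent} fix pairwise incompatible, individually consistent ``tickets'' $t_1, \dots, t_{2k}$ whose disjunction is a tautology, put $\chi = t_1 \vee \dots \vee t_k$ (so $\neg \chi$ is classically equivalent to $t_{k+1} \vee \dots \vee t_{2k}$), and let $\Sigma$ collect all $(k+1)$-ary disjunctions of the $t_i$. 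The uniform distribution witnesses $\alpha$-satisfiability of $\Sigma$ as long as $(k+1)/(2k)$ is on the good side of $x$; and summing the $\alpha$-constraints attached to the disjunctions $\chi \vee t_j$ (and symmetrically to $\neg \chi \vee t_i$) --- a \frech-style computation, i.e.\ an instance of \Cref{fact:adams-thm14} --- pins $\prob(\chi)$ inside a subinterval of the open interval $(1-x, x)$, so that neither $\prob(\chi)$ nor $\prob(\neg \chi) = 1 - \prob(\chi)$ can lie in $\alpha$. For thresholds near $1$, where this version runs out of room, it is replaced by the dual lottery $\Sigma = \{\neg t_1, \dots, \neg t_{2k}\}$, which (with $\chi$ again a union of $k$ tickets) caps $\prob(\chi)$ and $\prob(\neg \chi)$ simultaneously once $k$ is large enough.

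I expect the real work to be in stitching these constructions together: each one only covers a band of thresholds, so one has to choose the integer $k$ as a function of $x$ and verify that the resulting family handles \emph{every} admissible threshold, for open and closed $\alpha$ alike, including the behaviour at each rational boundary where \Cref{knight-thm414} and \Cref{fact:sat-rational} become relevant, and --- most delicately --- near $x = 1/2$, where the forbidden interval $(1-x, x)$ shrinks to nothing. \Cref{fact:every-rational-maxsat} supplies the building blocks that make this bookkeeping go through.
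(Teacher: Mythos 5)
Your reduction via \Cref{fact:sym-unsat} and your treatment of the ``if'' direction are both correct and essentially match the paper: for $.5 \in \alpha$ the paper likewise observes that any counterexample to $\sq{\G}{\D}$ already assigns $\phi$ a probability in $\alpha$ or in $\malpha$, and the $\alpha = \{1\}$ case is dispatched through \Cref{fact:sym-1-classical} in both proofs.

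The ``only if'' direction, however, is a genuine gap, not deferred bookkeeping. You never actually produce, for an arbitrary admissible threshold, a $\Sigma$ and $\chi$ with $\Sigma$ $\alpha$-satisfiable and both $\Sigma \cup \{\chi\}$ and $\Sigma \cup \{\neg\chi\}$ $\alpha$-unsatisfiable; you describe a family of candidate lotteries and acknowledge that choosing $k$ as a function of the threshold, handling open versus closed upsets, and surviving the degeneration near threshold $\frac{1}{2}$ all remain to be done. That last worry is not cosmetic: for an open upset the set of witnessing distributions is a nonempty \emph{relatively open} subset of the finite-dimensional simplex of distributions over the relevant state descriptions, and $\prob(\chi)$ is a nonconstant affine function on it, so it cannot be pinned to the single point $\frac{1}{2}$ --- the ``forbidden interval'' your construction needs collapses exactly where you fear it does. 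So the direct-construction route cannot be completed uniformly in the way you sketch, and the missing case analysis is where all the content of this direction lives.

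The paper avoids explicit constructions altogether here. It takes the conjunction-introduction arguments $CI_n = \sq{p_1, \ldots, p_n}{\bigwedge p_i}$ and argues: by \Cref{fact:sym-min-size}, $CI_2$ is $\alpha$-symmetric valid while $CI_k$ is $\alpha$-symmetric invalid for large enough $k$ (since $\frac{k}{k+1}$ eventually exceeds the threshold of $\alpha$); but any monotonic and transitive relation validating $CI_2$ must validate every $CI_n$, so by \Cref{fact:sym-monotonic} transitivity must fail. This buys a two-line argument in place of your threshold-by-threshold casework --- though note that it leans on $CI_2$ being $\alpha$-valid, which by \Cref{fact:sym-min-size} holds only when $\alpha \subseteq \hopen{\frac{2}{3}}{1}$, so even the paper's route deserves scrutiny for upsets whose threshold lies in $\clos{\frac{1}{2}}{\frac{2}{3}}$. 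Either way, your proposal as it stands does not establish the ``only if'' direction.
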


\begin{proof}
  By \Cref{fact:sym-1-classical} (and the fact that classical consequence is transitive), we know that $\{1\}$-symmetric consequence is transitive.
  So let $\alpha \neq \{1\}$.

  First, we show that if $.5 \in \alpha$ then $\alpha$-symmetric consequence is transitive.
  To that end, we show the contrapositive: that if $\sq{\G}{\D}$ is $\alpha$-symmetric invalid, then at least one of the arguments $\sq{\G}{\D, \phi}$ or $\sq{\phi, \G}{\D}$ is $\alpha$-symmetric invalid as well.
  To see this, let $.5 \in \alpha$, and take an $\alpha$-symmetric counterexample $\tuple{W, \alg, \dn{\;}, \prob}$ to $\sq{\G}{\D}$.
  If $\prob(\phi) \geq .5$, then this is also an $\alpha$-symmetric counterexample to $\sq{\phi, \G}{\D}$; and if $\prob(\phi) \leq .5$, then this is also an $\alpha$-symmetric counterexample to $\sq{\G}{\D, \phi}$.
  So one or the other of those arguments indeed must be $\alpha$-symmetric invalid.

  Now, we show that if $.5 \not\in \alpha \neq \{1\}$, then $\alpha$-symmetric consequence is not transitive.
  Return to our arguments $CI_{n}$ discussed above.
  By our assumptions on $\alpha$, we know from \Cref{fact:sym-min-size} that $CI_{2}$ is $\alpha$-symmetric valid and that there is some $k$ such that $CI_{k}$ is not $\alpha$-symmetric valid.
  But it is quick to see that any monotonic and transitive consequence relation where $CI_{2}$ is valid must validate $CI_{n}$ for all $n$.
  Since we know from \Cref{fact:sym-monotonic} that $\alpha$-symmetric consequence is monotonic, it must not be transitive.
\end{proof}

\Cref{fact:sym-reflexive,fact:sym-monotonic,fact:sym-transitive} together show that, among $\alpha$-symmetric consequence relations, only one is fully Tarskian: the limit case of $\{1\}$-symmetric consequence, which is classical consequence.
For all other choices of $\alpha$, either $.5 \in \alpha$ and we have a nonreflexive (but transitive) consequence relation, or $.5 \not\in \alpha$ and we have a nontransitive (but reflexive) relation.

We also have some easy facts about weak paraconsistency and weak paracompleteness, that again connect to the key question whether $.5 \in \alpha$:
\begin{fact} \label{fact:sym-weak-para-nonreflexive}
  For any $\alpha$, the following are equivalent:
  \begin{itemize}
    \item $\alpha$-symmetric consequence is weakly paraconsistent;
    \item $\alpha$-symmetric consequence is weakly paracomplete;
    \item $\alpha$-symmetric consequence is nonreflexive.
  \end{itemize}
\end{fact}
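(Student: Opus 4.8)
The plan is to prove the sharper statement that each of the three conditions is equivalent to $.5 \in \alpha$; the three-way equivalence then follows immediately. Since \Cref{fact:sym-reflexive} already establishes that $\alpha$-symmetric consequence is nonreflexive iff $.5 \in \alpha$, it only remains to show that $\alpha$-symmetric consequence is weakly paraconsistent iff $.5 \in \alpha$, and likewise weakly paracomplete iff $.5 \in \alpha$.

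The first thing I would observe is that, for \emph{every} upset $\alpha$, the positive halves of weak paraconsistency and weak paracompleteness hold automatically. By \Cref{fact:sym-unsat}, $\sq{\phi \land \neg\phi}{\psi}$ is $\alpha$-symmetric valid iff $\{\phi \land \neg\phi, \neg\psi\}$ is $\alpha$-unsatisfiable, and this holds because $\phi \land \neg\phi$ is classically contradictory, so $\prob(\phi \land \neg\phi) = 0 \notin \alpha$ in every model; dually, $\sq{\psi}{\phi \lor \neg\phi}$ is always $\alpha$-symmetric valid. So weak paraconsistency reduces to the existence of $\phi, \psi$ with $\sq{\phi, \neg\phi}{\psi}$ $\alpha$-symmetric invalid, and weak paracompleteness to the existence of $\phi, \psi$ with $\sq{\psi}{\phi, \neg\phi}$ $\alpha$-symmetric invalid. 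For the case $.5 \in \alpha$ (hence also $.5 \in \malpha$, since $1 - .5 = .5$), I would exhibit a single witness for both: the model $\tuple{\{w_1, w_2\}, \wp(\{w_1, w_2\}), \dn{\;}, \prob}$ with $\dn{p} = \{w_1\}$ and $\prob(\{w_1\}) = \prob(\{w_2\}) = .5$. Here $\prob(p) = \prob(\neg p) = .5 \in \alpha \cap \malpha$, while $\prob(p \land \neg p) = 0 \in \malpha$ and $\prob(p \lor \neg p) = 1 \in \alpha$, so this model is simultaneously an $\alpha$-symmetric counterexample to $\sq{p, \neg p}{p \land \neg p}$ and to $\sq{p \lor \neg p}{p, \neg p}$, witnessing both properties.

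For the case $.5 \notin \alpha$, I would first record the structural observation that then every $x \in \alpha$ satisfies $x > .5$: since $\alpha$ is an upset, if its threshold were below $.5$, or equal to $.5$ with $\alpha$ closed, we would have $.5 \in \alpha$. It follows that no probabilistic model can make both $\prob(\phi) \in \alpha$ and $\prob(\neg\phi) = 1 - \prob(\phi) \in \alpha$. Using \Cref{fact:sym-unsat} together with $\prob(\neg\neg\phi) = \prob(\phi)$, both $\{\phi, \neg\phi, \neg\psi\}$ and $\{\psi, \neg\phi, \neg\neg\phi\}$ are therefore $\alpha$-unsatisfiable for all $\phi, \psi$, so $\sq{\phi, \neg\phi}{\psi}$ and $\sq{\psi}{\phi, \neg\phi}$ are $\alpha$-symmetric valid for all $\phi, \psi$; hence $\alpha$-symmetric consequence is neither weakly paraconsistent nor weakly paracomplete. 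Combining the two cases with \Cref{fact:sym-reflexive} yields the equivalence. There is no real obstacle here beyond keeping the definitions of the two ``weak'' notions straight and checking the small fact about upsets omitting $.5$; everything else is bookkeeping with the already-established facts.
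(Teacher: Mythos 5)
Your proof is correct, but it takes a genuinely different route from the paper's. The paper establishes the three-way equivalence by a cycle of implications (weak paraconsistency $\Rightarrow$ weak paracompleteness $\Rightarrow$ nonreflexivity $\Rightarrow$ weak paraconsistency), each step driven by the self-duality of symmetric consequence via \Cref{fact:sym-negation}, by monotonicity (\Cref{fact:sym-monotonic}), and by the observation that $\phi \land \neg\phi$ always has probability $0$; the criterion ``$.5 \in \alpha$'' never appears. You instead prove the sharper statement that each of the three properties is equivalent to $.5 \in \alpha$, importing the nonreflexivity case wholesale from \Cref{fact:sym-reflexive} and settling the other two by an explicit two-world countermodel (when $.5 \in \alpha$) and by the upset argument that $\prob(\phi)$ and $\prob(\neg\phi)$ cannot both exceed $.5$ (when $.5 \notin \alpha$). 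Each approach has something to recommend it: the paper's argument is shorter and purely structural---it would survive in any self-dual, monotonic setting---while yours delivers a concrete characterization of \emph{which} upsets exhibit these properties (mirroring the style of the paper's \Cref{fact:weak-para-exactly-one} for preservation consequence) and, unlike the paper's proof, explicitly verifies the ``positive halves'' of the weak-paraconsistency and weak-paracompleteness definitions (that $\sq{\phi \land \neg\phi}{\psi}$ and $\sq{\psi}{\phi \lor \neg\phi}$ are always valid), which the paper leaves tacit. All the individual steps you use---the upset argument for $x > .5$, the appeal to \Cref{fact:sym-unsat}, and the single witnessing model serving both properties---check out.
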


\begin{proof}
 Weak paraconsistency implies weak paracompleteness: let $\sq{\phi, \neg \phi}{\D}$ be $\alpha$-symmetric invalid.
 Then by self-duality, $\sq{\neg \D}{\neg \phi, \neg \neg \phi}$ is invalid; this shows weak paracompleteness.

 Weak paracompleteness implies nonreflexivity: let $\sq{\G}{\phi, \neg \phi}$ be $\alpha$-symmetric invalid.
 Then by \Cref{fact:sym-negation} $\sq{\G, \phi}{\phi}$ is $\alpha$-symmetric invalid, and so by \Cref{fact:sym-monotonic} $\sq{\phi}{\phi}$ must be $\alpha$-symmetric invalid.

 Nonreflexivity implies weak paraconsistency: let $\sq{\phi}{\phi}$ be $\alpha$-symmetric invalid.
 Then by \Cref{fact:sym-negation} $\sq{\phi, \neg \phi}{\emptyset}$ is $\alpha$-symmetric invalid.
 Since $\phi \land \neg \phi$ can never take probability other than 0, any counterexample to $\sq{\phi, \neg \phi}{\emptyset}$ is also a counterexample to $\sq{\phi, \neg \phi}{\phi \land \neg \phi}$.
\end{proof}

\subsection{Relations to preservation consequence}
\label{sec:relat-pres-cons}

We close our discussion of symmetric consequence relations by pointing out some relations of relative strength that connect symmetric and preservation consequence relations.
Recall that from \Cref{fact:sym-order} we know that if $\alpha \subseteq \dalpha$ then $\alpha$-symmetric consequence is at least as strong as $\dalpha$-symmetric consequence, and if $\dalpha \subseteq \alpha$ then $\dalpha$-symmetric consequence is at least as strong as $\alpha$-symmetric consequence.
In fact, we can show that either way, $\alpha$-preservation consequence always lies somewhere properly in between:

\begin{fact} \label{fact:sym-pres-between-dual}
  For any $\alpha$, the consequence relation of $\alpha$-preservation is intermediate in strength between $\alpha$-symmetric consequence and $\dalpha$-symmetric consequence.
\end{fact}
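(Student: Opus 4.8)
The plan is to reduce the claim to two elementary facts about the relevant sets of probabilities, and then simply compare the three counterexample notions of \Cref{defn:pres-con,defn:sym-con} side by side.

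First I would record the set-theoretic observations I need. Writing $\mirror{\dalpha}$ for the mirror image of $\dalpha$, a one-line unwinding of \Cref{defn:mirror-dual} gives $\mirror{\dalpha} = \clos{0}{1}\setminus\alpha$, since $x\in\mirror{\dalpha}$ iff $1-x\in\dalpha$ iff $1-x\notin\malpha$ iff $x\notin\alpha$. Next, because $\alpha$ is an upset and $\malpha$ a downset, $\alpha\subseteq\dalpha$ is equivalent to $\alpha\cap\malpha=\emptyset$, which in turn holds exactly when $.5\notin\alpha$; taking complements, $\dalpha\subseteq\alpha$ is equivalent to $\clos{0}{1}\setminus\alpha\subseteq\malpha$, which holds exactly when $.5\in\alpha$. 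So exactly one of $\alpha\subseteq\dalpha$ and $\dalpha\subseteq\alpha$ obtains, and in the first case $\malpha\subseteq\clos{0}{1}\setminus\alpha$ while in the second $\clos{0}{1}\setminus\alpha\subseteq\malpha$.

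The proof then splits on which case obtains. Suppose $\alpha\subseteq\dalpha$. Since $\malpha\subseteq\clos{0}{1}\setminus\alpha$, every $\alpha$-symmetric counterexample --- a model with $\prob[\G]\subseteq\alpha$ and $\prob[\D]\subseteq\malpha$ --- is at once an $\alpha$-preservation counterexample; hence every $\alpha$-preservation valid argument is $\alpha$-symmetric valid. And since $\alpha\subseteq\dalpha$ while $\clos{0}{1}\setminus\alpha=\mirror{\dalpha}$, every $\alpha$-preservation counterexample --- a model with $\prob[\G]\subseteq\alpha$ and $\prob[\D]\subseteq\clos{0}{1}\setminus\alpha$ --- is a $\dalpha$-symmetric counterexample; hence every $\dalpha$-symmetric valid argument is $\alpha$-preservation valid. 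So $\dalpha$-symmetric consequence $\subseteq\alpha$-preservation consequence $\subseteq\alpha$-symmetric consequence, and since $\alpha\subseteq\dalpha$ makes $\alpha$-symmetric consequence the stronger of the two (by \Cref{fact:sym-order}), $\alpha$-preservation sits between them. The case $\dalpha\subseteq\alpha$ is exactly parallel with the roles of the two conclusion-side sets interchanged, using $\clos{0}{1}\setminus\alpha\subseteq\malpha$ for one inclusion and $\dalpha\subseteq\alpha$ (together with $\mirror{\dalpha}=\clos{0}{1}\setminus\alpha$) for the other; it yields $\alpha$-symmetric consequence $\subseteq\alpha$-preservation consequence $\subseteq\dalpha$-symmetric consequence.

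To get that $\alpha$-preservation lies \emph{properly} between the two endpoints, I would finally invoke the earlier facts that every $\alpha$-symmetric consequence relation is self-dual (\Cref{fact:sym-negation}) whereas no $\alpha$-preservation consequence relation is (the discussion around \Cref{fact:weak-para-exactly-one}); thus $\alpha$-preservation coincides with neither $\alpha$-symmetric nor $\dalpha$-symmetric consequence. I do not anticipate a real obstacle here --- the argument is just a matching of three definitions against the inclusions among $\alpha$, $\dalpha$, $\malpha$, and $\clos{0}{1}\setminus\alpha$ --- so the only point needing any care is keeping track of open versus closed upsets, which is why I would phrase the preliminary observations in terms of membership of $.5$ and emptiness of $\alpha\cap\malpha$ rather than in terms of thresholds.
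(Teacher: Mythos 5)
Your proof is correct and follows essentially the same route as the paper's: you compare the three counterexample notions directly, using the identities $\mirror{\dalpha}=\clos{0}{1}\setminus\alpha$ and $\malpha\subseteq\clos{0}{1}\setminus\alpha$ (when $\alpha\subseteq\dalpha$), exactly as the paper does, the only cosmetic difference being that the paper dispatches the case $\dalpha\subseteq\alpha$ by a without-loss-of-generality appeal to duality rather than writing it out. Your final paragraph on \emph{proper} intermediacy is sound but not required for the fact as stated --- the paper establishes that separately in the immediately following fact, via weak paraconsistency and weak paracompleteness rather than self-duality.
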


\begin{proof}
  Without loss of generality, suppose that $\alpha \subseteq \dalpha$,\footnote{If instead $\dalpha \subseteq \alpha$, then let $\beta = \dalpha$ and apply the result to $\beta$.} and so by \Cref{fact:sym-order} if any argument $\sq{\G}{\D}$ is $\dalpha$-symmetric valid it must also be $\alpha$-symmetric valid.

  We show first that if any $\sq{\G}{\D}$ is $\dalpha$-symmetric valid then it is $\alpha$-preservation valid.
  Suppose that $\modl = \tuple{W, \alg, \dn{\;}, \prob}$ is an $\alpha$-preservation counterexample to $\sq{\G}{\D}$; then $\prob[\G] \subseteq \alpha$ and $\prob[\D] \subseteq \clos{0}{1} \setminus \alpha$.
  Since $\alpha \subseteq \dalpha$, we have that $\prob[\G] \subseteq \dalpha$, and $\clos{0}{1} \setminus \alpha = \mirror{\dalpha}$, so $\modl$ is in fact a $\dalpha$-symmetric counterexample to $\sq{\G}{\D}$ as well.

  Next we show that if any $\sq{\G}{\D}$ is $\alpha$-preservation valid then it is $\alpha$-symmetric valid.
  Suppose that $\modl = \tuple{W, \alg, \dn{\;}, \prob}$ is an $\alpha$-symmetric counterexample to $\sq{\G}{\D}$; then $\prob[\G] \subseteq \alpha$ and $\prob[\D] \subseteq \malpha = \clos{0}{1} \setminus \dalpha$.
  But since $\alpha \subseteq \dalpha$, we know that $\clos{0}{1} \setminus \dalpha \subseteq \clos{0}{1} \setminus \alpha$, so $\prob[\D] \subseteq \clos{0}{1} \setminus \alpha$, and $\modl$ is in fact an $\alpha$-preservation counterexample to $\sq{\G}{\D}$ as well.
\end{proof}

Moreover, we can also show that preservation consequence and symmetric consequence never quite meet:

\begin{fact}
  For any upsets $\alpha, \beta$, we have that $\alpha$-symmetric consequence is distinct from $\beta$-preservation consequence.
\end{fact}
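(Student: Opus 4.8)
The plan is to separate the two families of consequence relations by a structural property that every symmetric relation has and no preservation relation has. The convenient choice, already packaged in earlier results, is the relationship between weak paraconsistency and weak paracompleteness.

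First I would recall \Cref{fact:sym-weak-para-nonreflexive}: for any upset $\alpha$, the $\alpha$-symmetric consequence relation is weakly paraconsistent if and only if it is weakly paracomplete (both being equivalent to its being nonreflexive). So for every symmetric consequence relation these two properties have the \emph{same} truth value. Next I would recall \Cref{fact:weak-para-exactly-one}: for any upset $\beta$, the $\beta$-preservation consequence relation is weakly paracomplete but not weakly paraconsistent when $.5 \notin \beta$, and weakly paraconsistent but not weakly paracomplete when $.5 \in \beta$. Thus in \emph{either} case exactly one of the two properties holds of a $\beta$-preservation relation, i.e.\ they always have \emph{opposite} truth values.

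Then I would argue by contradiction: if some $\alpha$-symmetric consequence relation coincided with some $\beta$-preservation consequence relation, then for that one relation the pair of properties ``weakly paraconsistent'' / ``weakly paracomplete'' would have to have the same truth value (reading it as a symmetric relation) and opposite truth values (reading it as a preservation relation)---impossible. Hence every $\alpha$-symmetric consequence relation is distinct from every $\beta$-preservation consequence relation. (An alternative, equivalent packaging: every symmetric relation is self-dual by \Cref{fact:sym-negation}, whereas a self-dual $\beta$-preservation relation would, by \Cref{fact:tautology-satisfiability-dual}, have to coincide with the $\dual{\beta}$-preservation relation, which is ruled out by the same weak-paraconsistency observation.)

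I do not expect a genuine obstacle here, since the whole content already lives in \Cref{fact:sym-weak-para-nonreflexive} and \Cref{fact:weak-para-exactly-one}. The only point worth a moment's care is checking that the separating biconditional ``weakly paraconsistent iff weakly paracomplete'' really fails for $\beta$-preservation in \emph{both} sub-cases $.5 \in \beta$ and $.5 \notin \beta$; but that is exactly what \Cref{fact:weak-para-exactly-one} delivers, as ``exactly one holds'' is precisely the failure of that biconditional.
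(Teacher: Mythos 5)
Your proposal is correct and follows essentially the same route as the paper: the paper's proof also contrasts \Cref{fact:sym-weak-para-nonreflexive} (symmetric relations have weak paraconsistency and weak paracompleteness with the same truth value) against \Cref{fact:weak-para-exactly-one} (preservation relations have exactly one of them) and concludes the two families cannot overlap. Nothing is missing.
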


\begin{proof}
  By \Cref{fact:sym-weak-para-nonreflexive}, $\alpha$-symmetric consequence is either: both weakly paracomplete and weakly paraconsistent, or neither weakly paracomplete nor weakly paraconsistent.
  By \Cref{fact:weak-para-exactly-one}, $\beta$-preservation consequence is either: weakly paracomplete and not weakly paraconsistent, or weakly paraconsistent and not weakly paracomplete.
  So they cannot be the same.
\end{proof}

\section{Conclusion}
\label{sec:conclusion}

In this paper we have explored three notions of probabilistic consequence, which we called material consequence, preservation consequence, and symmetric consequence. Preservation consequence is the most central of these three, since material consequence and symmetric consequence can be both be reduced to it. Indeed, to say that $\Gamma$ $\alpha$ materially entails $\Delta$ is to say that $\bigwedge \Gamma \hook \bigvee \Delta$ is $\alpha$-preservation valid. And to say that $\Gamma$ $\alpha$-symmetrically entails $\Delta$ is to say that $\Gamma \cup \neg \Delta$ $\alpha$-entails a contradiction.

All three relations of consequence coincide with classical logic in specific cases, but while material consequence coincides with classical logic for any $\alpha$,  symmetric consequence coincides with classical logic only in the case of $\alpha=\{1\}$; preservation consequence too coincides with classical logic for $\alpha=\{1\}$, but only in the \setfmla\ setting. In the \setset\ setting, as we have seen, certainty preservation yields supervaluationism rather than classical logic. Figure \ref{fig:containments} gives a representation of the containment relations between these logics.
In this figure, a logic is included in another when it is lower and connected by a solid edge; thick lines indicate continuum many logics, either contained one in another in the case of solid thick lines or all incomparable in the case of dotted thick lines.

Our exploration in this paper leaves us with some open questions. In particular, we have yet to prove or to disprove \Cref{conj:setset-captured} above, and stating that nontrivial \setset\ cases of $\alpha$-entailment supervene on classical entailment between a specific premise and a specific conclusion. On a more philosophical level, we have yet to examine how these various consequence relations can help us handle specific arguments, such as the lottery paradox, the preface paradox, and the sorites paradox --- invoked in particular by \cite{adams1998primer} and \cite{knight2002measuring}, and which also motivated the fuzzy counterpart of symmetric consequence relations found in \cite{smith2008vagueness,cobreros2024tolerance}. We leave this investigation, as well as the issue of which consequence relation or relations to favor and to build upon, for further work.

\begin{figure}
  {\centering
  \begin{tikzpicture}[scale=1.5]
    \coordinate [label=below:S$\hopen{0}{1}$]  (almostempty) at (5, 1);
    \coordinate [label=315:\dual{\alpha}] (dbetap) at (5, 2);
    \coordinate [label={[label distance=-1.5mm]315:\dual{\alpha'}}] (dbeta) at (5, 2.5);
    \coordinate [label=315:\dual{\beta}] (dgamma) at (5, 3.5);
    \coordinate [label=315:\dual{\gamma}]  (halfclosed)  at (5, 4.5);
    \coordinate [label=135:$\gamma$] (halfopen)    at (5, 5.5);
    \coordinate [label=135:$\beta$] (gamma)    at (5, 6.5);
    \coordinate [label={[label distance=-1mm]135:$\alpha'$}]   (beta)  at (5, 7.5);
    \coordinate [label=135:$\alpha$] (betap)  at (5, 8);
    \coordinate [label=S$\{1\}$]         (classical)   at (5, 9);

    \coordinate [label=left:P$\hopen{0}{1}$]  (widepres)    at (1, 5);
    \coordinate [label=135:$\dual{\alpha}\mkern-10mu$] (pdbetap) at (2, 5);
    \coordinate [label=135:$\dual{\alpha'}\mkern-15mu$] (pdbeta) at (2.5, 5);
    \coordinate [label=135:$\dual{\beta}\mkern-10mu$] (pdgamma) at (3.5, 5);
    \coordinate [label=135:$\dual{\gamma}\mkern-10mu$] (phalfclosed) at (4.5, 5);
    \coordinate [label=315:$\gamma$]  (phalfopen) at (5.5, 5);
    \coordinate [label=315:$\beta$]  (pgamma) at (6.5, 5);
    \coordinate [label={[label distance=-2mm]315:$\alpha'\mkern-10mu$}]  (pbeta) at (7.5, 5);
    \coordinate [label=315:$\alpha$]  (pbetap) at (8, 5);
    \coordinate [label=right:P$\{1\}$]         (tightpres)   at (9, 5);

    \foreach \point in {almostempty, halfclosed, halfopen, classical, widepres, tightpres, phalfclosed, phalfopen, beta, dbeta, pbeta, pdbeta, dbetap, pbetap, betap, pdbetap, dgamma, gamma, pgamma, pdgamma}
      \fill [black] (\point) circle (2pt);

    \draw[ultra thick] (almostempty) -- (dbetap);
    \draw (dbetap) -- (dbeta);
    \draw[ultra thick] (dbeta) -- (halfclosed);
    \draw[ultra thick] (halfopen) -- (beta);
    \draw (beta) -- (betap);
    \draw[ultra thick] (betap) -- (classical);
    \draw[densely dotted, ultra thick] (widepres) -- (pdbetap);
    \draw[densely dotted] (pdbetap) -- (pdbeta);
    \draw[densely dotted, ultra thick] (pdbeta) -- (phalfclosed);
    \draw[densely dotted, ultra thick] (phalfopen) -- (pbeta);
    \draw[densely dotted] (pbeta) -- (pbetap);
    \draw[densely dotted, ultra thick] (pbetap) -- (tightpres);

    \draw (halfclosed) -- (phalfclosed);
    \draw (phalfclosed) -- (halfopen);
    \draw (halfclosed) -- (phalfopen);
    \draw (phalfopen) -- (halfopen);

    \draw (almostempty) -- (widepres);
    \draw (almostempty) -- (tightpres);
    \draw (widepres) -- (classical);
    \draw (tightpres) -- (classical);

    \draw (dbeta) -- (pbeta);
    \draw (dbeta) -- (pdbeta);
    \draw (pbeta) -- (beta);
    \draw (pdbeta) -- (beta);
    \draw (dbetap) -- (pbetap);
    \draw (dbetap) -- (pdbetap);
    \draw (pbetap) -- (betap);
    \draw (pdbetap) -- (betap);

    \draw (dgamma) -- (pgamma);
    \draw (dgamma) -- (pdgamma);
    \draw (pgamma) -- (gamma);
    \draw (pdgamma) -- (gamma);
  \end{tikzpicture}
  }

  \noindent Symmetric consequence relations pictured on the vertical axis; preservation on the horizontal.
  Solid lines indicate containment; dotted lines indicate incomparability.
  Thick lines indicate continuum many consequence relations.
  $\gamma$ is the upset $\hopen{.5}{1}$.
  Zooming in at any rational $z > .5$ reveals the indicated structure where $\alpha$ is $\hopen{z}{1}$ and $\alpha'$ is $\clos{z}{1}$; zooming in at any irrational $x > .5$ reveals the indicated structure where $\beta$ is both $\hopen{x}{1}$ and $\clos{x}{1}$.

  P$\hopen{0}{1}$ is also subvaluational, P$\{1\}$ supervaluational, and S$\{1\}$ is classical logic as well as every material consequence relation.

  \caption{Containments among consequence relations}
  \label{fig:containments}
\end{figure}

%Reducing it all to preservation
%Comparisons between three kinds of consequence
%
%Conclusion, sorites applications, lottery applications, conditional probability, etc

\newpage

\section*{Acknowledgments}

We thank two anonymous referees for their helpful comments, as well as several audiences, in particular at the Melbourne Logic Workshop held at Monash University in 2023, at the University of Queensland, and at the workshop Conditionals 2024 held in Barcelona. We also thank several colleagues for helpful exchanges and discussions on topics connected to this paper, including Guillermo Badia, Quentin Blomet, Pablo Cobreros, Alba Cuenca, Tommaso Flaminio, Peter Fritz, Lluis Godo, Gabriele Kern-Isberner, Serafina Lapenta, Lorenzo Rossi, Hans Rott, Giuseppe Sanfilippo, Jan Sprenger, and Benjamin Spector. This research received support from PLEXUS (Grant Agreement no 101086295), a Marie Sklodowska-Curie action funded by the EU under the Horizon Europe Research and Innovation Programme. We also thank the ANR programs PROBASEM (ANR-19-CE28-0004) and FRONTCOG (17-EURE-0017). DR thanks the ARC project ``Substructural logics for limited resources''. PE thanks Institut Jean-Nicod (CNRS, ENS-PSL, EHESS), and Monash University for hosting him during the writing of this paper.

\bibliographystyle{apalike}
\bibliography{../prob}
\end{document}